\documentclass[11pt]{amsart}

\usepackage{amsmath,amssymb,amsthm,mathtools}
\usepackage{enumitem}
\usepackage{iftex}
\ifptex
  \usepackage[dvipdfmx,hidelinks]{hyperref}
\else
  \usepackage[hidelinks]{hyperref}
\fi
\usepackage{cleveref}

\theoremstyle{plain}
\newtheorem{theorem}{Theorem}[section]

\newtheorem{lemma}[theorem]{Lemma}
\newtheorem{corollary}[theorem]{Corollary}
\crefname{theorem}{theorem}{theorems}
\Crefname{theorem}{Theorem}{Theorems}
\crefname{proposition}{proposition}{propositions}
\Crefname{proposition}{Proposition}{Propositions}
\crefname{lemma}{lemma}{lemmas}
\Crefname{lemma}{Lemma}{Lemmas}
\crefname{corollary}{corollary}{corollaries}
\Crefname{corollary}{Corollary}{Corollaries}

\theoremstyle{definition}
\newtheorem{definition}[theorem]{Definition}
\crefname{definition}{definition}{definitions}
\Crefname{definition}{Definition}{Definitions}

\theoremstyle{remark}

\crefname{remark}{remark}{remarks}
\Crefname{remark}{Remark}{Remarks}
\crefname{section}{section}{sections}
\Crefname{section}{Section}{Sections}
\crefname{subsection}{subsection}{subsections}
\Crefname{subsection}{Subsection}{Subsections}
\crefname{equation}{equation}{equations}
\Crefname{equation}{Equation}{Equations}

\setlist[enumerate]{leftmargin=2em,itemsep=0.25em,topsep=0.25em}
\newcommand{\R}{\mathbb R}
\newcommand{\D}{\mathcal D}
\newcommand{\DK}{\mathcal D_K}
\newcommand{\T}{\mathcal T}
\newcommand{\Kc}{\mathsf K}
\newcommand{\DM}{\mathcal{DM}}
\newcommand{\dconc}{d_{\mathrm{conc}}}
\newcommand{\dtau}{d_{\tau}}
\newcommand{\kf}{d_{\operatorname{KF}}}
\DeclareMathOperator{\pr}{pr}
\DeclareMathOperator{\supp}{supp}

\title[Compact Screens for Geometric Data Sets]{Compact Screens and Pyramidal Compactification of Geometric Data Sets}
\author{Shigeaki Yokota}
\thanks{Graduate School of Science, Tohoku University, Sendai 980-8578, Japan. Corresponding author: \texttt{shigeaki.yokota.t4@dc.tohoku.ac.jp}}
\address{Graduate School of Science, Tohoku University, Sendai 980-8578, Japan}
\email{shigeaki.yokota.t4@dc.tohoku.ac.jp}
\keywords{geometric data set, metric measure space, observable distance, compactification, pyramid, measure concentration}
\subjclass[2020]{Primary 53C23; Secondary 54E35, 28A33}
\date{}

\begin{document}

\begin{abstract}
We introduce an observable distance that compares real-valued features through a fixed bounded coordinate. The coordinate retains the distinction between finite feature values while compressing their independent escape to infinity, the source of nonseparability for the classical observable distance on all geometric data sets. The new distance makes the full class separable and geodesic: every pair is joined by a constant-speed path, and on metric measure spaces the induced topology agrees with the concentration topology. From the same coordinate we construct compact screens, whose features take values in one fixed compact interval. The screened class is Polish and geodesic for the Box distance. Organizing its finite-feature quotients by the feature order yields a compact pyramid space. Pyramids generated by single compact screens form a dense subspace, so this pyramid space compactifies the original class after passage to compact screens. Convergence is detected through the Box-Hausdorff behavior of every finite measurement layer. Finally, taking sum-metric products with a common metric measure factor is nonexpansive for the new distance. More precisely, the comparison determined by a prescribed coupling of the original spaces and the diagonal coupling of the common factor is preserved, whereas optimization over all product couplings yields the nonexpansive inequality.
\end{abstract}

\maketitle

\section{Introduction}
\label{sec:introduction}

The comparison of metric probability spaces through real-valued observations provides a common language for measure concentration and degeneration of spaces \cite[Chapter~3]{gromov2007met}. A metric measure space, or mm-space, uses all real-valued $1$-Lipschitz functions as observations, and Gromov's observable distance compares the resulting families on parameter spaces \cite[Definition~3\(\frac12\).45, p.~199]{gromov2007met}. A geometric data set instead includes a family of real-valued features chosen for the problem at hand \cite[Definition~3.1]{hanika2022gds}. This flexibility retains only the prescribed observations, but direct comparison of their values on the real line leaves no countable approximation family for the full class. The resulting nonseparability also rules out an embedding into a compact metric space.

The distinction from mm-spaces lies in the relation between geometry and observations. For an mm-space, the convention of taking every $1$-Lipschitz function fixes the observational family through the metric. A geometric data set may choose its feature family separately while requiring the induced metric to remain complete and separable. This makes the choice of observations part of the object and distinguishes feature families carried by the same underlying space and measure. Compactness of each underlying space therefore gives no control over the topology of the full class.

A compactification must retain more than finite approximations of individual objects. Finite observations of different objects must be compared in one metric, and their containment relations must survive passage to the limit. In the classical theory, pyramids organize finite observations by the Lipschitz order \cite[Definitions~6.3--6.4]{shioya2016mmg}. The corresponding construction for all geometric data sets first requires a separable metric compatible with their prescribed features.

The obstruction already occurs on one-point spaces. For every nonempty set $A$ of positive integers, let $c_a$ be the constant function with value $a$ on a one-point space and set
\[X_A\coloneqq\bigl(\{*\},\{c_a\mid a\in A\},\delta_*\bigr).\]
Write $\dconc$ for the observable distance defined in \Cref{sec:preliminaries}. If $A\neq B$, then $\dconc(X_A,X_B)=1$ \cite[Theorem~1.1]{gds1}. Thus the isomorphism classes of geometric data sets contain an uncountable $1$-separated family. Every member has the same underlying probability space, and only the feature values produce the separation. The obstruction is the independent escape of observations along the real line, not the size of the underlying space.

On a one-point probability space, the Ky Fan metric \cite[Definition~1.23]{shioya2016mmg} between two constants is their absolute difference truncated at $1$. Distinct positive integers are therefore all at distance $1$, and the preceding calculation reduces to the Hausdorff distance between subsets of the integers. Neither geometric degeneration nor dispersion of mass occurs in this example. Refining the approximation of the underlying space cannot remove the obstruction, so the comparison of feature values itself must change.

We compare every feature through one fixed bounded monotone coordinate. More precisely, we use one half of the hyperbolic tangent for every geometric data set and measure the mean discrepancy after this transformation. The coordinate remains injective at finite values while compressing differences near infinity. For example, the transformed distance between consecutive positive integers tends to zero. This behavior differs from truncating every difference at a fixed threshold.

The bounded coordinate has three uses. Its range and Lipschitz constant are uniform over all features. Mean error is compatible with gluing couplings \cite[Definition~1.20]{shioya2016mmg} and with interpolation inside the bounded coordinate. Finally, completing the underlying space for the metric induced by transformed features gives a \emph{compact screen}, which allows finite observations to be compared by the Box distance \cite[Definition~5.7]{gds1}. These three constructions arise from the same coordinate rather than from separate truncations.

Let $\D$ denote the isomorphism classes of geometric data sets, and write $\dtau$ for the compactified observable distance defined in \Cref{sec:bounded-observable-distance}. Denote the compact screen of $X$ by $\mathsf C_\tau(X)$. Let $K=[-1/2,1/2]$, and let $\DK$ be the isomorphism classes whose features take values in $K$. We call $\DK$ the \emph{screened class}. \Cref{sec:pyramid-compactification} defines the space $\Pi_K$ of pyramids in $\DK$ and a metric $d_{\Pi,K}$ assembled from the Box Hausdorff distances of finite-feature measurements. For $Z\in\DK$, let $\mathcal P_Z$ be the \emph{principal pyramid} consisting of all objects dominated by $Z$. Precise definitions are given where these objects first enter the proofs.

\begin{theorem}
\label{thm:intro-compactification}
The metric space $(\Pi_K,d_{\Pi,K})$ is compact. The map
\[(\D,\dtau)\longrightarrow(\Pi_K,d_{\Pi,K}),\qquad X\longmapsto\mathcal P_{\mathsf C_\tau(X)}\]
is a topological embedding with dense image.
\end{theorem}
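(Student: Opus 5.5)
The plan splits the statement into three parts: compactness of $(\Pi_K,d_{\Pi,K})$, the embedding property of $X\mapsto\mathcal P_{\mathsf C_\tau(X)}$, and density of principal pyramids of compact screens. For compactness I follow the classical argument for pyramids under the Lipschitz order. For each $N$, the finite-feature measurements of objects in $\DK$ with $N$ features are, up to isomorphism, Borel probability measures on the cube $K^N$ with the coordinate features. Under the Box distance this measurement space is compact, because weak convergence on the compact cube $K^N$ is metrized by the Box distance on these Euclidean-type measurements. The $N$-th layer of a pyramid $\mathcal P$ is a closed subset of this compact space. The hyperspace of closed subsets with Box--Hausdorff distance is therefore compact by Blaschke's theorem. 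Since $d_{\Pi,K}$ is assembled as a weighted sum, say $\sum_N 2^{-N}\min\{1,\cdot\}$, of the layerwise Hausdorff distances, a diagonal argument gives a subsequence of any sequence of pyramids converging in every layer. It then remains to check that the limit is again a pyramid. Closedness under domination, i.e.\ under passing to $1$-Lipschitz images or sub-feature families, passes to Hausdorff limits because domination maps act continuously on measurements. Directedness, meaning that two members are dominated by a common member, survives because joint measurements of two layers sit in the layer of higher feature count, which is compact.

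For the embedding I first check that $X\mapsto\mathsf C_\tau(X)$ is an isometry, or at least a homeomorphism onto its image, from $(\D,\dtau)$ into $(\DK,\square)$. The features of $\mathsf C_\tau(X)$ are the transforms $\tfrac12\tanh\circ f$ on the completion. By construction, $\dtau$ is the observable comparison of these transformed features, and the mean-error comparison of transformed features should match the Box comparison of the screens, as I would verify from the earlier sections. Next I compose with $Z\mapsto\mathcal P_Z$ on $\DK$. Injectivity follows from the fact that $\mathcal P_Z$ determines $Z$: $Z$ is the maximal element, recoverable as the limit of its finite measurements. Continuity in one direction comes from the observation that if $Z_n\to Z$ in Box distance, then every $N$-feature measurement of $Z$ is approximated by the corresponding measurement of $Z_n$, and vice versa, so layers converge in Hausdorff distance. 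The reverse direction is the main obstacle. I must show that layerwise Hausdorff convergence $\mathcal P_{Z_n}\to\mathcal P_Z$ forces $Z_n\to Z$ in the original topology. I would try the classical route, as in the fact that the Lipschitz order embeds mm-spaces into pyramids with the concentration topology. Fix $\varepsilon$ and choose finitely many features $f_1,\dots,f_N$ of $Z$ whose measurement is $\varepsilon$-close to $Z$ itself in $\dtau$. Then pick measurements of $Z_n$ close to that of $Z$. The difficulty is the converse control: $Z_n$ might contain many additional features not seen by $Z$. To rule this out I use the symmetric Hausdorff condition on every layer. Any $N$-tuple of features of $Z_n$ must be close to some $N$-tuple measured from $Z$, and, because the bounded coordinate makes the observable comparison a supremum of finite-layer comparisons with uniform constants, this yields $\dtau(X_n,X)\to0$.

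Density is handled last. Given a pyramid $\mathcal P$, its layers are approximated by finite $\varepsilon$-nets of measurements. By directedness, finitely many members are dominated by a single $Z\in\mathcal P$, which may be taken with finitely many features, so $\mathcal P_Z$ is $d_{\Pi,K}$-close to $\mathcal P$ on the first $M$ layers, and the tail is negligible. Finally, $Z$ is realized as $\mathsf C_\tau(X)$ by pulling back its $K$-valued features through $\operatorname{artanh}(2\,\cdot)$ after shrinking them slightly into the open interval. This changes $Z$ by an arbitrarily small Box distance, and $\mathcal P_Z$ depends continuously on $Z$, so density follows.
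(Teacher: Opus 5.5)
\textbf{Compactness and density.} These parts of your plan are essentially workable. Blaschke compactness on each compact layer plus a diagonal argument is a legitimate alternative to the paper's Painlev\'e--Kuratowski compactness in the Polish space $(\DK,\Box)$. You still have to produce a pyramid whose layers are exactly the limit layers, and this needs \Cref{lem:K-domination-refinement}. Directedness for members with infinitely many features also needs the precompact common upper bounds of \Cref{lem:K-common-upper-bound}; a single compact layer is not enough.

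\textbf{Continuity of the embedding.} The genuine gap is here, because you route everything through the Box topology on $\DK$, and both legs of that route fail.

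The map $\mathsf C_\tau$ is neither an isometry nor even continuous from $(\D,\dtau)$ to $(\DK,\Box)$. The $L^1$ comparison of $\tau$-transformed features corresponds to the Ky Fan comparison of the screens, not to their Box comparison. Take the spaces $X_n$ (uniform measure on $n$ points with the spike features $f_{n,i}$) and the one-point space $Y$ from the proof of \Cref{cor:tau-box-comparison}. Then $\dtau(X_n,Y)=\tau(1/4)/n\to0$, while $\Box(\mathsf C_\tau(X_n),\mathsf C_\tau(Y))=2\tau(1/4)$ for every $n$. The same sequence shows that $Z\mapsto\mathcal P_Z$ is not an embedding for $\Box$ either, since $\mathcal P_{\mathsf C_\tau(X_n)}\to\mathcal P_{\mathsf C_\tau(Y)}$ in $d_{\Pi,K}$.

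The intermediate topology has to be $\dconc$ on $\DK$. For $K$-valued functions, $d_1\le2\kf$ and $\kf\le\sqrt{d_1}$. Hence $\dconc^\tau\le\sqrt{\dtau}$ and $\dtau\le2\dconc^\tau$ (\Cref{lem:screened-topology}), so $\mathsf C_\tau\colon(\D,\dtau)\to(\DK,\dconc)$ is a topological embedding. Your continuity argument only shows that Box convergence moves the finite measurements, which is too weak. You need layer convergence from $\dconc$-convergence alone. That is the quantitative finite-feature estimate
\[
(\Box)_H\bigl(\DM(X;N),\DM(Y;N)\bigr)\leq2N\dconc(X,Y)\qquad(X,Y\in\DK)
\]
of \Cref{lem:finite-measurement-proximity}, which makes $\iota_K$ $1$-Lipschitz for $\dconc$.

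\textbf{The reverse direction.} This is not proved either. You assert that ``the bounded coordinate makes the observable comparison a supremum of finite-layer comparisons with uniform constants'' without argument, and this is exactly the hard point you identified.

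Hausdorff closeness of the $N$-th layers matches each $N$-tuple of features of $Z_n$ with a tuple of $Z$. The coupling realizing this match depends on the tuple, and the number of features needed to approximate $Z_n$ is not a priori uniform in $n$. The observable distance, however, requires one coupling serving all features of $Z_n$ and of $Z$ at once. You need some device that makes these tuple-dependent couplings coherent. One possibility is to adjoin to every tuple a fixed finite family of features that nearly separates the points of $Z$, so that the coupling is essentially forced.

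The paper does not do this by hand. It imports the reconstruction theorem \Cref{lem:finite-measurement-reconstruction} (from \cite[Proposition~6.24]{gds2}), which gives $\dconc(Z_n,Z)\to0$ from convergence of every layer. It then uses $\dtau\le2\dconc^\tau$ to return to $(\D,\dtau)$.
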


The compact space in \Cref{thm:intro-compactification} is obtained from finite observations in the same bounded coordinate: $\dtau$ makes $\D$ separable and geodesic, while $\DK$ is Polish and geodesic for the Box distance. A point of $\Pi_K$ is a nonempty Box-closed family in the screened class that is downward closed and directed for the feature order \cite[Definition~3.8]{gds1}. Principal pyramids are generated by single screened objects, and density means that every pyramid is approximated by such points.

The theorem compactifies $(\D,\dtau)$, not the full class equipped with $\dconc$, which contains the nonseparable one-point family above. On the mm-space subspace, $\dtau$ and $\dconc$ determine the same convergent sequences. Passing to compact screens places finite observations in the fixed interval $K$, and the resulting image is Box dense in $\DK$.

\Cref{sec:preliminaries} introduces the geometric data sets, couplings, distances, feature order, and quotients used below. \Cref{sec:bounded-observable-distance} establishes metricity, separability, geodesicity, agreement with the mm-space topology, and the strict limitation on the full class. Compact screens are constructed in \Cref{sec:compact-screen-box}, which proves their Box geometry, Polishness, comparison estimates, and density. \Cref{sec:pyramid-compactification} controls weak pyramid limits by finite measurements and proves \Cref{thm:intro-compactification}. \Cref{sec:tensorization} distinguishes fixed-coupling equality from optimized nonexpansiveness for sum-metric products with a common factor. \Cref{app:bounded-feature-inversion} contains the compactness and inversion arguments, \Cref{app:pyramid-technical} supplies the finite-measurement details, and \Cref{app:pooling} proves cyclic-pooling convergence and the finite-support approximation of a general factor.

\section{Preliminaries}
\label{sec:preliminaries}

\subsection{Geometric data sets and couplings}
\label{subsec:gds-coupling}

Geometric data sets and mm-spaces share the same metric-measure notation, and couplings provide their common probability realizations.

\begin{definition}[Geometric data set {\cite[Definition~3.1]{hanika2022gds}}]
\label{def:gds}
A triple $(X,F_X,\mu_X)$ is called a \emph{geometric data set} if $F_X$ is a nonempty family of real-valued functions on $X$, the formula
\[d_{F_X}(x,x')\coloneqq\sup_{f\in F_X}|f(x)-f(x')|\]
defines a complete separable metric on $X$, and $\mu_X$ is a Borel probability measure with full support for this metric. We also denote this metric by $d_X$.
\end{definition}

Every $f\in F_X$ is $1$-Lipschitz on $(X,d_X)$. We write $\overline{F_X}$ for the closure of $F_X$ in the topology of pointwise convergence.

\begin{definition}[Isomorphism {\cite[Definition~3.2]{hanika2022gds}}]
\label{def:gds-isomorphism}
Geometric data sets $X$ and $Y$ are \emph{isomorphic} if there is a Borel map $\phi\colon X\to Y$ such that
\[\phi_*\mu_X=\mu_Y,\qquad \overline{F_Y}\circ\phi=\overline{F_X}.\]
\end{definition}

We identify isomorphic geometric data sets throughout.

The notion of an mm-space goes back to Gromov \cite[Definition~3\(\frac12\).1, p.~113]{gromov2007met}. We use the probability and full-support convention of \cite[Definition~2.8]{shioya2016mmg}.

\begin{definition}[mm-space {\cite[Definition~2.8]{shioya2016mmg}}]
\label{def:mm-space}
A triple $(X,d_X,\mu_X)$ is called an \emph{mm-space} if $(X,d_X)$ is a complete separable metric space and $\mu_X$ is a Borel probability measure with full support.
\end{definition}

Let $\operatorname{Lip}_1(X,d_X)$ denote the real-valued $1$-Lipschitz functions on $(X,d_X)$. We regard an mm-space $(X,d_X,\mu_X)$ as the geometric data set
\[\bigl(X,\operatorname{Lip}_1(X,d_X),\mu_X\bigr).\]
This feature family induces $d_X$, and the identification makes mm-spaces a subclass of $\D$.

\begin{definition}[Coupling {\cite[Definition~1.20]{shioya2016mmg}}]
\label{def:coupling}
Let $\mu$ and $\nu$ be Borel probability measures on complete separable metric spaces. A \emph{coupling} of $\mu$ and $\nu$ is a Borel probability measure on the product space with marginals $\mu$ and $\nu$. We write $\T(\mu,\nu)$ for the set of all such couplings.
\end{definition}

Write $\pr_1$ and $\pr_2$ for the coordinate projections from a product, and write $\supp\nu$ for the support of a Borel measure $\nu$. For measurable real-valued functions $u,v$ on a probability space $(Z,\xi)$, set
\[d_{1,\xi}(u,v)\coloneqq\int_Z|u-v|\,d\xi.\]
The Hausdorff distance induced by a metric $d$ is denoted by $(d)_H$.

\subsection{Classical observable and Box distances}
\label{subsec:classical-distances}

The classical observable and Box distances provide the comparison interfaces for the bounded-coordinate constructions.

\begin{definition}[Ky Fan metric {\cite[Definition~1.23]{shioya2016mmg}}]
\label{def:ky-fan}
For measurable real-valued functions $u,v$ on a probability space $(Z,\xi)$, their \emph{Ky Fan metric} is
\[\kf^\xi(u,v)\coloneqq\inf\{\epsilon\geq0\mid\xi(\{|u-v|>\epsilon\})\leq\epsilon\}.\]
\end{definition}

\begin{definition}[Parameter {\cite[Definition~4.1]{gds1}}]
\label{def:parameter}
Set $I\coloneqq[0,1)$, and let $\lambda$ be Lebesgue measure. A Borel map $\varphi\colon I\to X$ is called a \emph{parameter} of $\mu_X$ if $\varphi_*\lambda=\mu_X$.
\end{definition}

The following definition extends Gromov's observable distance for mm-spaces \cite[Definition~3\(\frac12\).45, p.~199]{gromov2007met} to geometric data sets \cite[Definition~4.2]{gds1}.

\begin{definition}[Observable distance {\cite[Definition~4.2]{gds1}}]
\label{def:dconc}
For geometric data sets $X$ and $Y$, their \emph{observable distance} is
\[\dconc(X,Y)\coloneqq\inf_{\varphi,\psi}(\kf^\lambda)_H\bigl(F_X\circ\varphi,F_Y\circ\psi\bigr),\]
where $\varphi$ and $\psi$ range over the parameters of $\mu_X$ and $\mu_Y$, respectively.
\end{definition}

The observable distance has the following optimal-coupling representation \cite[Theorem~4.6]{gds1}:
\begin{equation}
\label{eq:dconc-coupling}
\dconc(X,Y)=\min_{\pi\in\T(\mu_X,\mu_Y)}(\kf^\pi)_H\bigl(F_X\circ\pr_1,F_Y\circ\pr_2\bigr).
\end{equation}
Moreover, $\dconc(X,Y)=0$ if and only if $X$ and $Y$ are isomorphic \cite[Theorem~3.10]{hanika2022gds}.

For real-valued functions $u,v$ on a product space and a closed set $S$, define
\[d_{\infty,S}(u,v)\coloneqq\sup_{z\in S}|u(z)-v(z)|\]
when $S\neq\varnothing$, and set $d_{\infty,\varnothing}\coloneqq0$.

The following distance extends Gromov's $\Box_\lambda$ construction for mm-spaces \cite[Definitions~3\(\frac12\).2--3\(\frac12\).3, pp.~116--117]{gromov2007met} to geometric data sets \cite[Definition~5.7]{gds1}.

\begin{definition}[Box distance {\cite[Definition~5.7]{gds1}}]
\label{def:box}
For geometric data sets $X$ and $Y$, their \emph{Box distance} is
\begin{equation}
\label{eq:box-formula}
\Box(X,Y)\coloneqq\inf_{\substack{\pi\in\T(\mu_X,\mu_Y)\\S\subset X\times Y\ \mathrm{closed}}}\max\left\{1-\pi(S),2(d_{\infty,S})_H\bigl(F_X\circ\pr_1,F_Y\circ\pr_2\bigr)\right\}.
\end{equation}
\end{definition}

Replacing either feature family by its pointwise closure does not change the Box distance, and the infimum in \Cref{eq:box-formula} is attained by a coupling and a closed set \cite[Theorem~5.14]{gds1}. The Box distance is a complete metric on $\D$ \cite[Theorem~1.4]{gds1}, and
\[\dconc(X,Y)\leq\Box(X,Y)\]
by \cite[Proposition~5.8]{gds1}.

\subsection{Feature order and quotients}
\label{subsec:feature-order-quotient}

The feature order records domination by prescribed observations, while feature quotients produce the finite objects used later.

The feature order \cite[Definition~3.8]{gds1} extends Gromov's Lipschitz order on mm-spaces \cite[Definition~3\(\frac12\).15, p.~134]{gromov2007met} to geometric data sets.

\begin{definition}[Feature order {\cite[Definition~3.8]{gds1}}]
\label{def:feature-order}
For geometric data sets $X$ and $Y$, we say that $X$ \emph{dominates} $Y$, and write $Y\preceq X$, if there is a Borel map $p\colon X\to Y$ such that
\[p_*\mu_X=\mu_Y,\qquad F_Y\circ p\subset\overline{F_X}.\]
This relation is called the \emph{feature order}.
\end{definition}

Every domination map is $1$-Lipschitz \cite[Proposition~3.10]{gds1}. We use feature quotients to construct the dominated objects needed below.

\begin{definition}[Feature quotient {\cite[Definition~3.16 and Propositions~3.17 and~3.19]{gds1}}]
\label{def:feature-quotient}
Let $G\subset\overline{F_X}$ be a nonempty pointwise-closed family, and set
\[d_G(x,x')\coloneqq\sup_{g\in G}|g(x)-g(x')|.\]
Complete the metric quotient of this pseudometric and restrict the completed space to the support of the pushforward of $\mu_X$ under the canonical map. Every $g\in G$ descends to the quotient and extends continuously to the completion. The resulting geometric data set is the \emph{feature quotient} of $X$ by $G$ and is denoted by $X/G$. It is unique up to isomorphism and satisfies
\[X/G\preceq X.\]
\end{definition}

For finite $G$, the quotient $X/G$ represents a finite observation in \Cref{sec:compact-screen-box,sec:pyramid-compactification}.

The construction in \Cref{sec:bounded-observable-distance} requires compactness of bounded Lipschitz families and a realization procedure for function-induced pseudometrics.

\begin{lemma}
\label{lem:bounded-lipschitz-compactness}
Let $(Z,d)$ be a complete separable metric space, let $\mu$ be a Borel probability measure on $Z$ with full support, and let $\mathcal A$ be a nonempty family of real-valued functions on $Z$. Suppose that there are $B,\ell\geq0$ such that
\[|u(z)|\leq B,\qquad |u(z)-u(z')|\leq\ell d(z,z')\]
for every $u\in\mathcal A$ and all $z,z'\in Z$. Then the $L^1(\mu)$-closure of $\mathcal A$ is compact, and each of its elements has a unique continuous $\ell$-Lipschitz representative. Under this identification, the pointwise closure and the $L^1(\mu)$-closure of $\mathcal A$ coincide.
\end{lemma}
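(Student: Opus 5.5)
The plan is to compare both closures with the uniform-on-compacta closure of $\mathcal A$ inside $C(Z)$. Let $\mathcal L$ denote the set of all $\ell$-Lipschitz functions $u\colon Z\to[-B,B]$. This set is closed under pointwise limits, because the inequalities $|u(z)|\le B$ and $|u(z)-u(z')|\le\ell d(z,z')$ pass to pointwise limits. The pointwise closure $\overline{\mathcal A}^{\mathrm{pt}}$ is therefore contained in $\mathcal L$.

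\textbf{Compactness of the pointwise closure.} By Tychonoff, $[-B,B]^Z$ is compact, so $\overline{\mathcal A}^{\mathrm{pt}}$ is compact for the pointwise topology. On the equicontinuous family $\mathcal L$, pointwise convergence coincides with uniform convergence on compact sets. The space $Z$ is separable; fix a countable dense set $D\subset Z$. By equicontinuity, the pointwise topology on $\mathcal L$ is induced by convergence on $D$, and this is metrizable. Hence $\overline{\mathcal A}^{\mathrm{pt}}$ is a compact metrizable space, and sequential arguments are legitimate.

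\textbf{Identification with the $L^1$-closure.} Consider the map $\iota\colon\mathcal L\to L^1(\mu)$ sending a function to its class. This map is continuous from the pointwise topology: if $u_n\to u$ pointwise with $|u_n|\le B$, then dominated convergence gives $\int|u_n-u|\,d\mu\to0$, and metrizability lets us use sequences. The map is injective because $\mu$ has full support. If two continuous functions agree $\mu$-a.e., then the open set where they differ is $\mu$-null, hence empty. The same argument gives uniqueness of continuous representatives, since any $\ell$-Lipschitz representative is continuous.

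A continuous injection from a compact space into a Hausdorff space is a homeomorphism onto its image. So $\iota(\overline{\mathcal A}^{\mathrm{pt}})$ is compact in $L^1(\mu)$, hence closed, and it contains $\iota(\mathcal A)$. It follows that the $L^1$-closure of $\mathcal A$ is contained in $\iota(\overline{\mathcal A}^{\mathrm{pt}})$.

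For the reverse inclusion, take $v\in\overline{\mathcal A}^{\mathrm{pt}}$. By metrizability there is a sequence $u_n\in\mathcal A$ with $u_n\to v$ pointwise. Continuity of $\iota$ then gives $\iota(u_n)\to\iota(v)$ in $L^1$, so $\iota(v)$ lies in the $L^1$-closure. The two closures therefore coincide under $\iota$. In particular, the $L^1$-closure is compact, and each of its elements has a unique continuous representative, which is $\ell$-Lipschitz.

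\textbf{Main obstacle.} The delicate step is the metrizability claim, which is what allows sequences to represent points of the pointwise closure. Care is needed because $Z$ need not be locally compact. I would argue directly: on an $\ell$-Lipschitz family, $|u(z)-v(z)|\le|u(q)-v(q)|+2\ell d(z,q)$ for every $q\in D$. This shows that convergence on $D$ implies pointwise convergence everywhere. Consequently the metric $\sum_k 2^{-k}\min\{1,|u(q_k)-v(q_k)|\}$, where $(q_k)$ enumerates $D$, induces the pointwise topology on $\mathcal L$.
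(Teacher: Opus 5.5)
Your proof is correct and follows the same route as the paper. Both arguments establish compactness of the pointwise closure, continuity of the natural map into $L^1(\mu)$ by dominated convergence, and injectivity from full support, and then use the compact-to-Hausdorff argument to identify the image with the $L^1(\mu)$-closure. The only difference is in how compactness and metrizability of the pointwise closure are obtained. You prove them directly from Tychonoff's theorem and convergence on a countable dense set. The paper instead rescales the metric to reduce to $1$-Lipschitz functions and cites Hanika et al.\ for the equivalence of pointwise and Ky Fan convergence.
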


Compactness of uniformly bounded Lipschitz families and the identification with pointwise convergence follow from \cite[Lemma~2.2 and Remark~2.3]{hanika2022gds}. The additional formulation in terms of continuous representatives and the $L^1$-closure is proved in \Cref{subsec:app-bounded-realization}.

\begin{lemma}
\label{lem:feature-realization}
Let $Z$ be a separable metrizable space, let $\nu$ be a Borel probability measure on $Z$, and let $H$ be a nonempty family of continuous real-valued functions on $Z$. Suppose that
\[d^H(z,z')\coloneqq\sup_{h\in H}|h(z)-h(z')|\]
is finite, its metric quotient $Z_H$ is separable, and the canonical map from $Z$ to $Z_H$ is Borel. Let $\widehat Z_H$ be the completion of $Z_H$, let $q\colon Z\to\widehat Z_H$ be the canonical map, and set
\[Z_H^\nu\coloneqq\supp(q_*\nu).\]
Then every $h\in H$ extends uniquely from the metric quotient to a continuous function $\widehat h$ on $\widehat Z_H$, and
\[\left(Z_H^\nu,\{\widehat h|_{Z_H^\nu}\mid h\in H\},q_*\nu|_{Z_H^\nu}\right)\]
is a geometric data set. If every member of $H$ takes values in a closed interval $J$, then every extended feature also takes values in $J$. If $\nu$ has full support and $q$ is continuous, then $q(Z)\subset Z_H^\nu$.
\end{lemma}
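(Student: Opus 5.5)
We will verify the three conditions of \Cref{def:gds} for the triple $\bigl(Z_H^\nu,\{\widehat h|_{Z_H^\nu}\},q_*\nu|_{Z_H^\nu}\bigr)$ and then treat the two supplementary assertions.

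\emph{Step 1: extension of the features.} Each $h\in H$ satisfies $|h(z)-h(z')|\leq d^H(z,z')$, so $h$ descends to a well-defined $1$-Lipschitz function on $Z_H$. Since $Z_H$ is dense in its completion $\widehat Z_H$, this function has a unique continuous, indeed $1$-Lipschitz, extension $\widehat h$. If $h$ takes values in a closed interval $J$, then each value $\widehat h(y)$ is a limit of values of $h$, and closedness of $J$ gives $\widehat h(y)\in J$.

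\emph{Step 2: the features induce the completed metric.} Write $\widehat d$ for the metric of $\widehat Z_H$. We show that
\[
\widehat d(y,y')=\sup_{h\in H}|\widehat h(y)-\widehat h(y')|
\]
on all of $\widehat Z_H$. On $Z_H$ this is the definition of $d^H$. For general $y,y'$, choose $z_n\to y$ and $z_n'\to y'$ in $Z_H$.
\begin{itemize}
\item[] The inequality ``$\geq$'' holds because each $\widehat h$ is $1$-Lipschitz.
\item[] For ``$\leq$'', fix $\epsilon>0$, take $n$ with $\widehat d(z_n,y),\widehat d(z_n',y')<\epsilon$, and then pick $h\in H$ with $|h(z_n)-h(z_n')|>\widehat d(z_n,z_n')-\epsilon$. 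Using the $1$-Lipschitz bound for $\widehat h$, we get $|\widehat h(y)-\widehat h(y')|>\widehat d(y,y')-5\epsilon$.
\end{itemize}
This approximation step is the only place requiring care; it is a routine $\epsilon/3$-type argument.

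The set $Z_H^\nu$ is closed in the complete space $\widehat Z_H$, so it is complete. It is separable because $Z_H$ is separable and hence so is its completion. Restricting the identity above to $Z_H^\nu$ shows that the restricted features induce exactly this complete separable metric.

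\emph{Step 3: the measure.} Since $q$ is the Borel canonical map to $Z_H$ composed with the isometric inclusion, $q$ is Borel, so $q_*\nu$ is a Borel probability measure on the separable metric space $\widehat Z_H$. On a separable metric space the support carries full measure, because its complement is a countable union of null open balls. Hence $q_*\nu|_{Z_H^\nu}$ is a probability measure. By the definition of support it has full support on $Z_H^\nu$.

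\emph{Step 4: the final claim.} Suppose $\nu$ has full support and $q$ is continuous. Take $z\in Z$ and an open neighborhood $U$ of $q(z)$. Then $q^{-1}(U)$ is a nonempty open subset of $Z$, so
\[
q_*\nu(U)=\nu\bigl(q^{-1}(U)\bigr)>0 .
\]
Therefore $q(z)\in\supp(q_*\nu)=Z_H^\nu$, which gives $q(Z)\subset Z_H^\nu$.
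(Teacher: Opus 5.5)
Your proposal is correct and follows essentially the same route as the paper, including the same argument for $q(Z)\subset Z_H^\nu$. The only real difference is in Step~2: the paper shows that the extended features induce the completed metric by embedding $Z_H$ isometrically into $\ell^\infty(H)$ via $[z]\mapsto(h(z)-h(z_0))_{h\in H}$ and extending this isometry to $\widehat Z_H$, whereas you verify the identity directly by density and an $\epsilon$-approximation (your $5\epsilon$ bookkeeping checks out). You also make explicit that the support of $q_*\nu$ has full measure in the separable space $\widehat Z_H$, which the paper leaves implicit but which is needed for the restricted measure to be a probability measure with full support.
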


The quotient construction and its universal property for a geometric data set are given in \cite[Definition~3.16, Proposition~3.17, Claim~3.18, and Proposition~3.19]{gds1}. The extension needed when the domain is only assumed to be separable and metrizable is proved in \Cref{subsec:app-bounded-realization}.

\section{The Compactified Observable Distance}
\label{sec:bounded-observable-distance}

Constant features already make the full class nonseparable when real-valued features are compared without modification. We place every feature value in the same bounded coordinate and measure the resulting Hausdorff $L^1$ cost on couplings. This construction yields a metric that is separable and geodesic and that induces the classical topology on mm-spaces. A one-point example at the end of the section shows that the classical topology is not preserved on the full class.

\subsection{Metricity}
\label{subsec:dtau-metricity}

Define
\[
\tau\colon\R\longrightarrow(-1/2,1/2),\qquad
\tau(r)\coloneqq\frac12\tanh r.
\]
This is an odd, strictly increasing, $1/2$-Lipschitz homeomorphism. Its particular form is needed for more than boundedness. The addition formula for the hyperbolic tangent gives the concavity under nonnegative translations used in the pooling contraction in \Cref{sec:tensorization}.

\begin{definition}[Compactified feature set]
\label{def:compactified-feature-set}
For a geometric data set $X$, define its \emph{compactified feature set} by
\[
\Kc_\tau(X)\coloneqq
\overline{\{\tau\circ f\mid f\in F_X\}}^{\,\mathrm{pt}},
\]
where the closure is taken with respect to pointwise convergence on $X$.
\end{definition}

For a coupling $\pi\in\T(\mu_X,\mu_Y)$, put
\[
D_\tau^\pi(X,Y)\coloneqq(d_{1,\pi})_H
\bigl(\Kc_\tau(X)\circ\pr_1,\Kc_\tau(Y)\circ\pr_2\bigr).
\]

\begin{definition}[Compactified observable distance]
\label{def:dtau}
For geometric data sets $X$ and $Y$, define the \emph{compactified observable distance} by
\[
\dtau(X,Y)\coloneqq\inf_{\pi\in\T(\mu_X,\mu_Y)}D_\tau^\pi(X,Y).
\]
\end{definition}

\begin{lemma}
\label{lem:feature-compactness}
For every geometric data set $X$, the set $\Kc_\tau(X)$ is a nonempty compact subset of $L^1(\mu_X)$. Each of its elements has a unique continuous $1/2$-Lipschitz representative $u\colon X\to[-1/2,1/2]$. Moreover,
\[
\Kc_\tau(X)=\{\tau\circ f\mid f\in\overline{F_X}\}\cup E_X,
\]
where $E_X$ consists of the constant functions in $\Kc_\tau(X)$ with value $-1/2$ or $1/2$.
\end{lemma}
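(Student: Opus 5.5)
Apply \Cref{lem:bounded-lipschitz-compactness} to the family $\mathcal A=\{\tau\circ f\mid f\in F_X\}$ on $(X,d_X,\mu_X)$. Every $f\in F_X$ is $1$-Lipschitz for $d_X$ and $\tau$ is $1/2$-Lipschitz with values in $(-1/2,1/2)$, so we may take $B=1/2$ and $\ell=1/2$. The lemma then yields three facts: the $L^1(\mu_X)$-closure of $\mathcal A$ is compact; each of its elements has a unique continuous $1/2$-Lipschitz representative; and this closure coincides with the pointwise closure $\Kc_\tau(X)$. Nonemptiness is immediate from $F_X\neq\varnothing$. The representatives take values in $[-1/2,1/2]$ because pointwise limits of functions with values in $(-1/2,1/2)$ lie in the closed interval.

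For the description of $\Kc_\tau(X)$, we prove the two inclusions.

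First, the right-hand side is contained in $\Kc_\tau(X)$. Let $f\in\overline{F_X}$, and let $f_\alpha\to f$ pointwise be a net in $F_X$. By continuity of $\tau$, $\tau\circ f_\alpha\to\tau\circ f$ pointwise, so $\tau\circ f\in\Kc_\tau(X)$. The set $E_X$ is contained in $\Kc_\tau(X)$ by definition.

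Conversely, let $u\in\Kc_\tau(X)$ be the pointwise limit of a net $\tau\circ f_\alpha$ with $f_\alpha\in F_X$. We split into two cases.

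\emph{Case 1: $|u(x_0)|<1/2$ for some $x_0\in X$.} Each $f_\alpha$ is $1$-Lipschitz and $\tau^{-1}$ is continuous on $(-1/2,1/2)$. Hence $f_\alpha(x_0)=\tau^{-1}(\tau(f_\alpha(x_0)))$ is eventually bounded, and $|f_\alpha(x)-f_\alpha(x_0)|\le d_X(x,x_0)$ gives eventual pointwise boundedness at every $x$. Therefore $u(x)=\lim\tau(f_\alpha(x))$ lies strictly inside $(-1/2,1/2)$ for every $x$. Setting $f\coloneqq\tau^{-1}\circ u$, continuity of $\tau^{-1}$ gives $f_\alpha\to f$ pointwise, so $f\in\overline{F_X}$ and $u=\tau\circ f$.

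\emph{Case 2: $|u(x)|=1/2$ for every $x$.} By the same Lipschitz argument, if $f_\alpha(x_0)\to+\infty$ at one point, then $f_\alpha(x)\to+\infty$ at every point, since $f_\alpha(x)\ge f_\alpha(x_0)-d_X(x,x_0)$. The same holds for $-\infty$. Hence $u$ is the constant $1/2$ or the constant $-1/2$, and $u\in E_X$.

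The main (mild) obstacle is Case 1: an escape to $\pm\infty$ at one point must be shown to force escape at all points. The uniform $1$-Lipschitz bound on features handles this, which is exactly why $E_X$ consists only of constants. One also notes that the argument works with nets, since pointwise closure is a topological closure. Alternatively, \Cref{lem:bounded-lipschitz-compactness} identifies the closure with an $L^1$-closure; since $L^1(\mu_X)$ is metric, sequences suffice there after passing to continuous representatives.
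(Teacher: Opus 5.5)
Your proof is correct and takes essentially the same route as the paper. You apply \Cref{lem:bounded-lipschitz-compactness} with $B=\ell=1/2$, use $f(x)\geq f(x_0)-d_X(x,x_0)$ to show that one endpoint value forces the endpoint constant, and otherwise invert $\tau$. The differences are minor: you work with nets rather than the paper's sequences, and your closing remark assigns the escape-propagation argument to Case~1 when it is used in Case~2, where Case~1 propagates boundedness instead.
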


\begin{proof}
Apply \Cref{lem:bounded-lipschitz-compactness} to $\{\tau\circ f\mid f\in F_X\}$. This gives compactness, the unique continuous $1/2$-Lipschitz representatives, and the equality between the pointwise and $L^1(\mu_X)$ closures.

Take $u\in\Kc_\tau(X)$ and a sequence $f_n\in F_X$ such that $\tau\circ f_n\to u$ pointwise as $n\to\infty$. Suppose that $u(x_0)=1/2$ for some $x_0\in X$. Then $f_n(x_0)\to+\infty$. For every $x\in X$,
\[
f_n(x)\geq f_n(x_0)-d_X(x,x_0),
\]
and hence $u$ is identically $1/2$. The argument for the value $-1/2$ is the same.

Suppose that $u$ does not take either endpoint value. The function $f\coloneqq\tau^{-1}\circ u$ is real-valued, and $f_n\to f$ pointwise as $n\to\infty$. Thus $f\in\overline{F_X}$. Conversely, if $f\in\overline{F_X}$, apply the dominated convergence theorem \cite[Theorem~3.31, pp.~92--93]{axler2020mira} to a sequence in $F_X$ converging pointwise to $f$. This gives $\tau\circ f\in\Kc_\tau(X)$. This completes the proof.
\end{proof}

\begin{theorem}
\label{thm:dtau-metric}
The function $\dtau$ is a metric on $\D$, and the infimum over couplings in its definition is attained.
\end{theorem}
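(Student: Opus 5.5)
The plan is to prove four things in order: attainment of the infimum, independence of the representative, the triangle inequality, and definiteness. Symmetry is immediate, since $\dtau(Y,X)$ is computed from the flipped couplings.

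\textbf{Attainment.} By \Cref{lem:feature-compactness}, $\Kc_\tau(X)$ and $\Kc_\tau(Y)$ are compact in $L^1$ and consist of continuous functions with values in $[-1/2,1/2]$.
\begin{enumerate}
\item The set $\T(\mu_X,\mu_Y)$ is weakly compact, by Prokhorov's theorem, since both marginals are tight.
\item Fix $u\in\Kc_\tau(X)$ and $v\in\Kc_\tau(Y)$. The map $\pi\mapsto\int|u\circ\pr_1-v\circ\pr_2|\,d\pi$ is weakly continuous, because the integrand is bounded and continuous.
\item Because the marginals are fixed, $\bigl|d_{1,\pi}(u\circ\pr_1,v\circ\pr_2)-d_{1,\pi}(u'\circ\pr_1,v'\circ\pr_2)\bigr|\le\|u-u'\|_{L^1(\mu_X)}+\|v-v'\|_{L^1(\mu_Y)}$. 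This bound is uniform in $\pi$.
\item Compactness of the two families then shows that $\sup_u\inf_v$ and $\sup_v\inf_u$ are continuous in $\pi$. Hence $D_\tau^\pi(X,Y)$ is continuous on a compact set and attains its minimum.
\end{enumerate}

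\textbf{Representatives and triangle inequality.} If $\phi\colon X\to X'$ is an isomorphism, use the coupling $(\mathrm{id},\phi)_*\mu_X$. The identity $\overline{F_{X'}}\circ\phi=\overline{F_X}$ and \Cref{lem:feature-compactness} give $\Kc_\tau(X')\circ\phi=\Kc_\tau(X)$, so $D_\tau$ vanishes for this coupling. Together with the triangle inequality, this shows $\dtau$ is well defined on $\D$.

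For the triangle inequality, take optimal couplings $\pi_1\in\T(\mu_X,\mu_Y)$ and $\pi_2\in\T(\mu_Y,\mu_Z)$. Glue them into a measure $\rho$ on $X\times Y\times Z$, as in \cite[Lemma~1.20-type gluing]{shioya2016mmg}. Every function in the three families depends on a single coordinate. Therefore each $L^1(\rho)$ distance between two of them equals the corresponding $d_{1,\pi}$ distance on the relevant two-fold projection. The Hausdorff triangle inequality in $L^1(\rho)$ then gives $D_\tau^{(\pr_{13})_*\rho}(X,Z)\le\dtau(X,Y)+\dtau(Y,Z)$.

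\textbf{Definiteness (expected main obstacle).} Suppose $\dtau(X,Y)=0$, and take an optimal $\pi$ with $D_\tau^\pi=0$.
\begin{enumerate}
\item Fix $f\in F_X$. By compactness, some $v\in\Kc_\tau(Y)$ satisfies $\tau\circ f\circ\pr_1=v\circ\pr_2$ $\pi$-almost everywhere.
\item The left side takes values in $(-1/2,1/2)$. So $v$ is not an endpoint constant, and by \Cref{lem:feature-compactness} we have $v=\tau\circ g$ with $g\in\overline{F_Y}$.
\item Injectivity of $\tau$ gives $f\circ\pr_1=g\circ\pr_2$ $\pi$-almost everywhere, so $\kf^\pi(f\circ\pr_1,g\circ\pr_2)=0$. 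The same argument applies with $X$ and $Y$ exchanged.
\end{enumerate}
This gives $(\kf^\pi)_H\bigl(\overline{F_X}\circ\pr_1,\overline{F_Y}\circ\pr_2\bigr)=0$.

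To pass from the closures back to $F_X$ and $F_Y$, note that pointwise convergence implies Ky Fan convergence by dominated convergence. Hence $F_X\circ\pr_1$ is Ky Fan dense in $\overline{F_X}\circ\pr_1$, and likewise for $Y$, so the Hausdorff distance between $F_X\circ\pr_1$ and $F_Y\circ\pr_2$ is also $0$. By \Cref{eq:dconc-coupling} this yields $\dconc(X,Y)=0$, and \cite[Theorem~3.10]{hanika2022gds} shows that $X$ and $Y$ are isomorphic.

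The delicate points are excluding the endpoint constants in $E_X$ and the closure bookkeeping just described. Both are handled by \Cref{lem:feature-compactness}.
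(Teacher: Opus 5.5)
Your proposal is correct and follows essentially the same route as the paper: continuity of $\pi\mapsto D_\tau^\pi(X,Y)$ on the weakly compact coupling set via the uniform $L^1$ estimate and compactness of the feature sets, the gluing argument for the triangle inequality, and definiteness by excluding the endpoint constants, inverting $\tau$, passing from $\overline{F_Y}$ back to $F_Y$, and reducing to $\dconc(X,Y)=0$ through \Cref{eq:dconc-coupling}. The only variation is in well-definedness and is minor. You show $\dtau(X,X')=0$ with the graph coupling $(\mathrm{id},\phi)_*\mu_X$ and then use the triangle inequality, which needs the small Lipschitz check that the endpoint constants of $\Kc_\tau(X)$ and $\Kc_\tau(X')$ correspond; the paper instead proves that $\phi$ is a surjective isometry and pushes couplings forward by $\phi\times\operatorname{id}_Y$. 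Similarly, you keep the identities $\pi$-almost everywhere, whereas the paper upgrades them to pointwise identities on $\supp\pi$, which the argument does not actually require.
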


\begin{proof}
Weak compactness of the set of couplings \cite[Lemma~2.10]{gds1} and finite $L^1$-nets show that $\pi\mapsto D_\tau^\pi(X,Y)$ is continuous. Therefore, the infimum is attained. The details are given in \Cref{subsec:app-feature-coupling}.

We first check that the value is independent of the chosen isomorphic representative. Let $\phi\colon X\to X'$ be an isomorphism. The equality $\overline{F_{X'}}\circ\phi=\overline{F_X}$ shows that $\phi$ is an isometric embedding for the metrics induced by the feature families. Its image is closed by completeness and dense because $\phi_*\mu_X=\mu_{X'}$ and $\mu_{X'}$ has full support. Thus $\phi$ is surjective. For any $Y$, pushing a coupling of $X$ and $Y$ forward by $\phi\times\operatorname{id}_Y$ preserves the $L^1$ costs between compactified features. The inverse isomorphism gives the reverse correspondence, and therefore $\dtau(X,Y)=\dtau(X',Y)$.

Nonnegativity and symmetry follow from the definition. We prove the triangle inequality for $X,Y,W\in\D$. Apply the gluing lemma \cite[Lemma~7.6]{villani2009optimal} to the optimal couplings $\pi_{XY}$ and $\pi_{YW}$ over their common marginal $\mu_Y$, and denote the resulting probability measure on $X\times Y\times W$ by $\gamma$. Write $\pr_X,\pr_Y,\pr_W$ for the coordinate projections and put
\[
\pi_{XW}\coloneqq(\pr_X,\pr_W)_*\gamma.
\]
For $u\in\Kc_\tau(X)$ and $\epsilon>0$, choose successively $v\in\Kc_\tau(Y)$ and $w\in\Kc_\tau(W)$ such that
\[
\begin{aligned}
d_{1,\pi_{XY}}(u\circ\pr_X,v\circ\pr_Y)
&\leq D_\tau^{\pi_{XY}}(X,Y)+\epsilon,\\
d_{1,\pi_{YW}}(v\circ\pr_Y,w\circ\pr_W)
&\leq D_\tau^{\pi_{YW}}(Y,W)+\epsilon.
\end{aligned}
\]
The triangle inequality in $L^1(\gamma)$ gives
\[
d_{1,\pi_{XW}}(u\circ\pr_X,w\circ\pr_W)
\leq D_\tau^{\pi_{XY}}(X,Y)+D_\tau^{\pi_{YW}}(Y,W)+2\epsilon.
\]
The same argument beginning with a feature of $W$ gives the reverse directed estimate. Letting $\epsilon\downarrow0$ and taking the infimum over the $XW$-couplings yields
\[
\dtau(X,W)\leq\dtau(X,Y)+\dtau(Y,W).
\]

It remains to prove separation. Suppose that $\dtau(X,Y)=0$, let $\pi$ be an optimal coupling, and set $Z\coloneqq\supp\pi$. The two compactified feature sets pulled back to $Z$ agree in $L^1(Z,\pi)$. For $f\in F_X$, there is $v\in\Kc_\tau(Y)$ such that
\[
\tau\circ f\circ\pr_1=v\circ\pr_2
\]
holds $\pi$-almost everywhere. Both sides are continuous on $Z$, and $\pi$ has full support on $Z$, so the equality holds at every point of $Z$. The left-hand side takes values in $(-1/2,1/2)$, and hence $v$ is not an endpoint constant. By \Cref{lem:feature-compactness}, there is $g\in\overline{F_Y}$ such that $v=\tau\circ g$. Injectivity of $\tau$ gives
\[
f\circ\pr_1=g\circ\pr_2\qquad\text{on }Z.
\]
Interchanging $X$ and $Y$ gives the reverse matching. If a sequence in $F_Y$ converges pointwise to $g$, then its pullback to $Z$ converges $\pi$-almost everywhere and therefore in measure. It follows from \Cref{eq:dconc-coupling} that $\dconc(X,Y)=0$. Separation for the observable distance \cite[Theorem~3.10]{hanika2022gds} shows that $X$ and $Y$ are isomorphic. This completes the proof.
\end{proof}

\subsection{Separability and geodesicity}
\label{subsec:dtau-geometry}

Parameters place all compactified feature sets in the common separable space $L^1(I,\lambda)$. A geodesic is obtained by matching two compact feature sets on an optimal coupling and interpolating linearly in the $\tau$-coordinate.

\begin{theorem}
\label{thm:dtau-separable}
The metric space $(\D,\dtau)$ is separable.
\end{theorem}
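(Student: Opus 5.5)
The plan is to embed every geometric data set, via a parameter, into the hyperspace of compact subsets of $L^1(I,\lambda)$ with its Hausdorff metric. That hyperspace is separable because $L^1(I,\lambda)$ is separable. Then I would check that the resulting quantity dominates $\dtau$ and can be approximated by a countable family of objects.

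First step: compare $\dtau$ with parametrized Hausdorff distances. For $X\in\D$ and a parameter $\varphi$ of $\mu_X$, put
\[
\mathcal K(X,\varphi)\coloneqq\Kc_\tau(X)\circ\varphi\subset L^1(I,\lambda).
\]
This is a compact set by \Cref{lem:feature-compactness}, since composition with $\varphi$ is an isometry from $L^1(\mu_X)$ into $L^1(\lambda)$. Given parameters $\varphi,\psi$, the map $(\varphi,\psi)_*\lambda$ is a coupling of $\mu_X$ and $\mu_Y$, and its $L^1$ costs equal those computed on $I$. Hence
\[
\dtau(X,Y)\leq\bigl(d_{1,\lambda}\bigr)_H\bigl(\mathcal K(X,\varphi),\mathcal K(Y,\psi)\bigr).
\]

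Second step: a countable dense family. Fix a countable dense set $\{h_k\}$ in $L^1(I,\lambda)$ made of step functions on dyadic intervals with rational values in $[-1/2,1/2]$. For a finite set $J$ of such functions, all constant on the dyadic intervals of level $m$, define a finite geometric data set $W_J$ as follows. Take the feature quotient of the $2^m$-point space $\{0,\dots,2^m-1\}$ with uniform measure by the features $\tau^{-1}\circ h$, $h\in J$. The values of $h$ must be clipped to the open interval so that $\tau^{-1}$ is defined, which is harmless after a small perturbation. This quotient is realized by \Cref{lem:feature-realization}. Composing with the parameter that sends the $i$-th dyadic interval to the point $i$ shows that $\mathcal K(W_J,\cdot)$ is the pointwise closure of $J$, possibly together with endpoint constants. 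Because $J$ is finite and bounded away from $\pm1/2$, no endpoint constants appear and the closure equals $J$. These objects form a countable set.

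Third step: approximation. Given $X$ and $\epsilon>0$, fix a parameter $\varphi$ and a finite $\epsilon$-net $\{u_1\circ\varphi,\dots,u_N\circ\varphi\}$ of the compact set $\mathcal K(X,\varphi)$. Replace each $u_i\circ\varphi$ by a dyadic rational step function $h_i$ at a common level $m$, with values in $[-1/2+\delta,1/2-\delta]$, within $\epsilon$ in $L^1$. Then the Hausdorff distance between $\mathcal K(X,\varphi)$ and $\{h_1,\dots,h_N\}$ is at most $2\epsilon$. By the first step, $\dtau(X,W_{\{h_i\}})\leq 2\epsilon$, which gives separability.

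The main obstacle is the second step. One has to confirm that the finite realization $W_J$ has compactified feature set exactly $J$ when pulled back by the dyadic parameter. A quotient collapses atoms, so the parameter must be taken as the composite of the dyadic map with the quotient map, and this composite must still push $\lambda$ to the quotient measure. One must also ensure the pointwise closure adds nothing, which holds because $J$ is finite. I would handle the collapse by checking the identity directly on atoms rather than through general theory.
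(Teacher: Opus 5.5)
Your proof is correct, and its first and third steps match the paper's proof. Both map $X$ to the compact set $\Kc_\tau(X)\circ\varphi\subset L^1(I,\lambda)$ and use the coupling $(\varphi,\psi)_*\lambda$ to bound $\dtau$ by the Hausdorff distance in $L^1(I,\lambda)$.

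The difference is in how the countable family is obtained. The paper never builds objects that realize prescribed finite sets of functions. It notes that the hyperspace of nonempty compact subsets of $L^1(I,\lambda)$ is separable, so its subset $\{K_X\mid X\in\D\}$ is separable too. Hence some countable subfamily $K_{X_n}$, already coming from geometric data sets, is dense. This removes the step you identified as the main obstacle.

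Your explicit realization of $W_J$ via \Cref{lem:feature-realization} and the composite dyadic parameter is still valid. The pointwise closure of a finite set is the set itself, so $\Kc_\tau(W_J)$ pulls back exactly to $J$. Your route also gives a slightly stronger conclusion than the paper's: finite geometric data sets with finitely many features, whose $\tau$-transformed features take rational values, are already $\dtau$-dense in $\D$.

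One minor wording fix: dyadic rational step functions are dense in the $[-1/2,1/2]$-valued part of $L^1(I,\lambda)$, not in all of $L^1(I,\lambda)$. The bounded-valued part is all you actually use.
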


\begin{proof}
For each $X\in\D$, choose a parameter $\phi_X\colon I\to X$, whose existence follows from \cite[Theorem~17.41]{kechris1995classical}, and put
\[
K_X\coloneqq
\overline{\{\tau\circ f\circ\phi_X\mid f\in F_X\}}^{\,L^1(\lambda)}.
\]
By \Cref{lem:feature-compactness}, the set $K_X$ is a nonempty compact subset of the separable metric space $L^1(I,\lambda)$. The family of nonempty compact subsets of a separable metric space is separable for the Hausdorff distance. In fact, the nonempty finite subsets of a fixed countable dense set form a dense family.

There is therefore a countable family $(X_n)$ such that $(K_{X_n})$ is dense in $\{K_X\mid X\in\D\}$. Using $(\phi_X,\phi_{X_n})_*\lambda$ as a coupling of $X$ and $X_n$, we obtain
\[
\dtau(X,X_n)\leq(d_{1,\lambda})_H(K_X,K_{X_n}).
\]
Thus $(X_n)$ is dense in $(\D,\dtau)$. This completes the proof.
\end{proof}

\begin{lemma}
\label{lem:scalar-interpolation}
For $0<t<1$, put
\[
M_t(a,b)\coloneqq\tau^{-1}((1-t)\tau(a)+t\tau(b)).
\]
Then
\[
|M_t(a,b)-M_t(a',b')|
\leq\frac{|a-a'|}{1-t}+\frac{|b-b'|}{t}.
\]
For each $c\in\{-1/2,1/2\}$, the map
\[
a\longmapsto\tau^{-1}((1-t)\tau(a)+tc)
\]
is $(1-t)^{-1}$-Lipschitz, and the map
\[
b\longmapsto\tau^{-1}((1-t)c+t\tau(b))
\]
is $t^{-1}$-Lipschitz.
\end{lemma}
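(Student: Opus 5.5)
The plan is to reduce both estimates to a pointwise bound on the derivative of the map $(a,b)\mapsto M_t(a,b)$, and then integrate along segments. Write $s\coloneqq(1-t)\tau(a)+t\tau(b)$, which lies in $(-1/2,1/2)$, so $M_t(a,b)=\tau^{-1}(s)$ is defined and smooth. Since $\tau'(r)=\tfrac12(1-\tanh^2 r)=\tfrac12(1-4\tau(r)^2)$, the inverse satisfies $(\tau^{-1})'(s)=1/\bigl(\tfrac12(1-4s^2)\bigr)$. The partial derivative in $a$ is therefore
\[
\partial_aM_t(a,b)=\frac{(1-t)\bigl(1-4\tau(a)^2\bigr)}{1-4s^2},
\]
and the analogous formula holds for $\partial_bM_t$ with $t$ and $\tau(b)$.

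The key step is to show that $1-4s^2\geq(1-t)^2\bigl(1-4\tau(a)^2\bigr)$; this gives $0<\partial_aM_t\leq(1-t)^{-1}$. Put $\alpha=2\tau(a)$, $\beta=2\tau(b)\in(-1,1)$ and $\sigma=2s=(1-t)\alpha+t\beta$. Then $1-\sigma^2=(1-\sigma)(1+\sigma)$. Since $\beta>-1$,
\[
1+\sigma=(1-t)(1+\alpha)+t(1+\beta)\geq(1-t)(1+\alpha),
\]
and since $\beta<1$,
\[
1-\sigma\geq(1-t)(1-\alpha).
\]
Multiplying the two inequalities gives $1-\sigma^2\geq(1-t)^2(1-\alpha^2)$. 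This is the only nontrivial point, and it is elementary. The symmetric argument gives $0<\partial_bM_t\leq t^{-1}$.

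With these bounds, the two-variable estimate follows from the triangle inequality
\[
|M_t(a,b)-M_t(a',b')|\leq|M_t(a,b)-M_t(a',b)|+|M_t(a',b)-M_t(a',b')|,
\]
together with the mean value theorem in each variable separately.

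For the endpoint maps, fix $c\in\{-1/2,1/2\}$. The same computation with $\beta=2c=\pm1$ goes through, provided $\sigma$ stays in $(-1,1)$. This holds because $t<1$ and $\alpha\in(-1,1)$. The inequalities above used only $\beta\in[-1,1]$, so they still give $1-\sigma^2\geq(1-t)^2(1-\alpha^2)$. Hence $a\mapsto\tau^{-1}((1-t)\tau(a)+tc)$ is $(1-t)^{-1}$-Lipschitz. Symmetrically, taking $\alpha=2c$ and $t>0$ shows that $b\mapsto\tau^{-1}((1-t)c+t\tau(b))$ is $t^{-1}$-Lipschitz. I expect no real obstacle beyond checking that the denominators stay positive, which the strict inequalities $0<t<1$ guarantee.
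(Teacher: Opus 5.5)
Your proposal is correct and is essentially the paper's proof: the paper also sets $x=\tanh a$, $y=\tanh b$, $q=(1-t)x+ty$ (your $\alpha,\beta,\sigma$) and computes $\partial_aM_t=(1-t)(1-x^2)/(1-q^2)$ and $\partial_bM_t=t(1-y^2)/(1-q^2)$. It bounds these with the same factorization $1-q^2=(1-q)(1+q)$ and the one-sided estimates $1\pm q\geq(1-t)(1\pm x)$, $1\pm q\geq t(1\pm y)$, then varies the two variables in succession and handles the endpoint maps by the same calculation with $x=\pm1$ or $y=\pm1$, exactly as you do.
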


The derivative estimates are proved in \Cref{subsec:app-interpolation-inversion}.

\begin{theorem}
\label{thm:dtau-geodesic}
The metric space $(\D,\dtau)$ is geodesic. More precisely, for every $X,Y\in\D$, there is a path $(X_t)_{t\in[0,1]}$ such that $X_0=X$, $X_1=Y$, and
\[
\dtau(X_s,X_t)=|s-t|\dtau(X,Y)
\]
for all $s,t\in[0,1]$.
\end{theorem}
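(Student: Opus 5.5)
The plan is to build the path explicitly on the support of an optimal coupling, by interpolating matched compactified features linearly in the $\tau$-coordinate. First I would prove the upper bound $\dtau(X_s,X_t)\le|s-t|\,\dtau(X,Y)$. The reverse inequality then follows from the triangle inequality of \Cref{thm:dtau-metric}: $\dtau(X,Y)\le\dtau(X,X_s)+\dtau(X_s,X_t)+\dtau(X_t,Y)$, and each term on the right is bounded by the corresponding length, so all three bounds are equalities.

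\emph{Setup.} Let $\pi$ be an optimal coupling from \Cref{thm:dtau-metric}, put $D\coloneqq\dtau(X,Y)$ and $Z\coloneqq\supp\pi$, and equip $Z$ with the sum metric $d_X+d_Y$. By \Cref{lem:feature-compactness}, $A\coloneqq\Kc_\tau(X)\circ\pr_1$ and $B\coloneqq\Kc_\tau(Y)\circ\pr_2$ are compact in $L^1(\pi)$ and consist of continuous functions. Their Hausdorff distance is $D$. Define
\[R\coloneqq\{(u,v)\in A\times B\mid d_{1,\pi}(u,v)\le D\}.\]
By compactness, both projections of $R$ are onto, and $R$ is closed, hence compact. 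For $0<t<1$, set $w_t(u,v)\coloneqq(1-t)u+tv$ and
\[H_t\coloneqq\{\tau^{-1}\circ w_t(u,v)\mid(u,v)\in R,\ w_t(u,v)\text{ is not an endpoint constant}\}.\]
The values $\pm1/2$ can occur only when both $u$ and $v$ are the same endpoint constant.

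\emph{Lipschitz control and realization.} By \Cref{lem:scalar-interpolation}, together with its one-sided versions for the cases where $u$ or $v$ is an endpoint constant, every $h\in H_t$ is $\max\{(1-t)^{-1},t^{-1}\}$-Lipschitz on $Z$. This uses that $\tau^{-1}\circ u$ and $\tau^{-1}\circ v$ are $1$-Lipschitz whenever they are real-valued. Consequently $d^{H_t}$ is finite and dominated by a multiple of the metric of $Z$, its quotient is separable, and the canonical map is continuous. \Cref{lem:feature-realization} with $\nu=\pi$ then gives a geometric data set $X_t$ and a continuous map $q_t\colon Z\to X_t$ with $q_t(Z)\subset X_t$. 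Set $X_0\coloneqq X$ and $X_1\coloneqq Y$.

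\emph{Pulling back compactified features.} I would check that $\Kc_\tau(X_t)\circ q_t$ equals $w_t(R)$, modulo endpoint constants. One inclusion is by construction. For the other, a pointwise limit of $\tau\circ h_n$ with $h_n\in H_t$ is handled by passing to a convergent subsequence in the compact set $R$, and \Cref{lem:bounded-lipschitz-compactness} identifies pointwise and $L^1$ limits. The same argument at $t=0$ (and $t=1$) shows that the analogous construction reproduces $X$ (respectively $Y$) up to isomorphism, since $(\pr_1)_*\pi=\mu_X$. This is the step I expect to be the main obstacle. The delicate points are the bookkeeping of the endpoint constants $E_X$, including a pair with $u=1/2$ and $v=-1/2$, whose interpolation is a finite constant. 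One also has to verify that the identification survives restriction to $\supp(q_{t*}\pi)$, since $q_t$ has dense image there.

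\emph{Cost estimate.} Use $(q_s,q_t)_*\pi$ as a coupling of $X_s$ and $X_t$. For $(u,v)\in R$,
\[\int_Z|w_s(u,v)-w_t(u,v)|\,d\pi=|s-t|\,d_{1,\pi}(u,v)\le|s-t|D.\]
Matching $w_s(u,v)$ with $w_t(u,v)$ through the same pair in $R$ bounds both directed Hausdorff distances by $|s-t|D$. Therefore $\dtau(X_s,X_t)\le|s-t|D$, and the equality argument from the first paragraph completes the proof.
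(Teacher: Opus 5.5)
This is essentially the paper's proof: an optimal coupling on $Z=\supp\pi$, a compact relation $R\subset A\times B$ with surjective projections and cost at most $\dtau(X,Y)$, features $\tau^{-1}((1-t)u+tv)$ controlled by \Cref{lem:scalar-interpolation} and realized through \Cref{lem:feature-realization}, identification of the pulled-back $\Kc_\tau(X_t)$ with the image of $R$, the coupling induced by the canonical maps, and the triangle inequality. The only difference is that the paper takes $R$ to be the closure of countably many matched pairs, each with a real-valued component, so its generators are dense in $R$ by construction; you use the maximal relation, and the endpoint point you left open closes in one line: if $(c,c)\in R$ with $c=\pm1/2$ and $\dtau(X,Y)>0$ (otherwise take the constant path), then $c=\lim_n\tau\circ f_n\circ\pr_1$ in $L^1(\pi)$ for some $f_n\in F_X$, the pairs $(\tau\circ f_n\circ\pr_1,c)$ eventually lie in $R$ and are retained in $H_t$, and hence $\Kc_\tau(X_t)\circ q_t=w_t(R)$ exactly, which is what your cost estimate uses.
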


\begin{proof}
Take an optimal coupling $\pi\in\T(\mu_X,\mu_Y)$, and put $h\coloneqq\dtau(X,Y)$ and $Z\coloneqq\supp\pi$. On $Z$, set
\[
A\coloneqq\Kc_\tau(X)\circ\pr_1,\qquad
B\coloneqq\Kc_\tau(Y)\circ\pr_2.
\]
The sets $A$ and $B$ are compact in $L^1(Z,\pi)$ and satisfy $(d_{1,\pi})_H(A,B)=h$.

Choose countable dense subsets of $A$ and $B$ consisting of elements obtained from real-valued features. Assign to each member a partner at distance at most $h$, and let $R_0\subset A\times B$ be the collection of all assigned pairs. Its closure $R$ is compact, both coordinate projections of $R$ are surjective, and
\[
d_{1,\pi}(u,v)\leq h\qquad((u,v)\in R).
\]
For every generating pair in $R_0$, at least one component is chosen from an actual real-valued feature.

Fix $t\in(0,1)$. For $(u,v)\in R_0$, the function $(1-t)u+tv$ takes values in $(-1/2,1/2)$. Let $H_t$ be the family on $Z$ consisting of
\[
\tau^{-1}((1-t)u+tv),
\]
and let $d_t$ be its induced pseudometric. By \Cref{lem:scalar-interpolation},
\[
d_t(z,z')\leq
\frac{d_X(\pr_1z,\pr_1z')}{1-t}
+\frac{d_Y(\pr_2z,\pr_2z')}{t}.
\]
Thus $d_t$ is finite, the canonical map to the metric quotient is continuous, and the quotient is separable. Applying \Cref{lem:feature-realization} to $(Z,\pi,H_t)$ gives a geometric data set $(Q_t,F_t,\mu_t)$. Denote its isomorphism class by $X_t$, and set $X_0\coloneqq X$ and $X_1\coloneqq Y$.

Define
\[
\Phi_t\colon R\longrightarrow L^1(Z,\pi),\qquad
\Phi_t(u,v)\coloneqq(1-t)u+tv,
\]
and write $C_t\coloneqq\Phi_t(R)$. The compactified feature set of $X_t$, pulled back to $Z$, is $C_t$. This closure identification is verified in \Cref{subsec:app-feature-coupling}. At the endpoints, surjectivity of the two projections of $R$ gives $C_0=A$ and $C_1=B$.

Let $0\leq s<t\leq1$. Use the coupling induced by the canonical maps from $Z$ to $X_s$ and $X_t$. The set
\[
\{((1-s)u+sv,(1-t)u+tv)\mid(u,v)\in R\}
\]
projects onto both $C_s$ and $C_t$. Therefore,
\[
\dtau(X_s,X_t)
\leq(t-s)\sup_{(u,v)\in R}d_{1,\pi}(u,v)
\leq(t-s)h.
\]
Applying this estimate to $(X,X_s)$, $(X_s,X_t)$, and $(X_t,Y)$ and then using the triangle inequality gives
\[
h\leq\dtau(X,X_s)+\dtau(X_s,X_t)+\dtau(X_t,Y)
\leq sh+(t-s)h+(1-t)h=h.
\]
All the upper bounds are therefore equalities. This completes the proof.
\end{proof}

\subsection{The topology on mm-spaces}
\label{subsec:mm-topology}

The inverse of $\tau$ cannot be controlled uniformly near the endpoints. Since the full family of $1$-Lipschitz functions on an mm-space is invariant under the addition of constants, we normalize functions by a median and use tightness on the fixed space to prevent escape to the endpoints. A \emph{median} of a real-valued measurable function $f$ is a number $m\in\R$ such that
\[
\mu(\{f\leq m\})\geq\frac12,\qquad
\mu(\{f\geq m\})\geq\frac12.
\]

\begin{lemma}
\label{lem:inversion-tightness}
For each $n$, let $(W_n,\xi_n)$ be a probability space and let $a_n,b_n$ be real-valued measurable functions on $W_n$. Suppose that for every $\eta>0$, there are $M>0$ and $N\in\mathbb N$ such that
\[
\xi_n(\{|a_n|>M\})<\eta
\]
for every $n\geq N$. If
\[
d_{1,\xi_n}(\tau\circ a_n,\tau\circ b_n)\longrightarrow0
\qquad(n\to\infty),
\]
then
\[
\kf^{\xi_n}(a_n,b_n)\longrightarrow0
\qquad(n\to\infty).
\]
\end{lemma}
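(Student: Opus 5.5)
The plan is to prove convergence in probability of $a_n-b_n$ to zero, which is equivalent to $\kf^{\xi_n}(a_n,b_n)\to0$. Fix $\epsilon\in(0,1)$ and $\eta>0$. I will show that $\xi_n(\{|a_n-b_n|>\epsilon\})<2\eta$ for all large $n$. For $\eta$, choose $M$ and $N$ as in the hypothesis.

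The key estimate is a local inverse Lipschitz bound for $\tau$ on a compact window. Since $\tau'(r)=\frac12\operatorname{sech}^2r$ is positive and even, it is bounded below on $[-M-1,M+1]$ by $c\coloneqq\frac12\operatorname{sech}^2(M+1)>0$. I will then show that on the set where $|a_n|\leq M$ and $|a_n-b_n|>\epsilon$, we have
\[
|\tau(a_n)-\tau(b_n)|\geq c\epsilon.
\]
The argument uses monotonicity of $\tau$. Consider first the case $b_n>a_n+\epsilon$. Then $\tau(b_n)-\tau(a_n)\geq\tau(a_n+\epsilon)-\tau(a_n)$. The interval $[a_n,a_n+\epsilon]$ lies in $[-M-1,M+1]$, so this difference is at least $c\epsilon$ by the mean value theorem. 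The case $b_n<a_n-\epsilon$ is symmetric. This step is the main point: it handles $b_n$ possibly escaping to infinity, which no uniform inverse bound could control. Monotonicity lets us compare with the nearby point $a_n\pm\epsilon$ rather than with $b_n$ itself.

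To finish, I will apply Markov's inequality:
\[
\xi_n(\{|a_n|\leq M,\ |a_n-b_n|>\epsilon\})\leq\frac{d_{1,\xi_n}(\tau\circ a_n,\tau\circ b_n)}{c\epsilon}.
\]
The right-hand side tends to zero, so it is less than $\eta$ for large $n$. Adding the bound $\xi_n(\{|a_n|>M\})<\eta$ for $n\geq N$ gives $\xi_n(\{|a_n-b_n|>\epsilon\})<2\eta$. We may take $2\eta\leq\epsilon$, and then \Cref{def:ky-fan} gives $\kf^{\xi_n}(a_n,b_n)\leq\epsilon$ for all large $n$. Since $\epsilon$ is arbitrary, the lemma follows.

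I expect no step to be a serious obstacle. The only care needed is the one-sided tightness: the hypothesis controls only $a_n$, and the monotone comparison above is exactly what compensates for the missing control on $b_n$.
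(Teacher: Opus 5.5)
Your proof is correct and follows the paper's route: restrict to $\{|a_n|\leq M\}$ via the tail hypothesis, get a quantitative lower bound on $|\tau(a_n)-\tau(b_n)|$ whenever $|a_n-b_n|>\epsilon$, then apply Markov's inequality. The only difference is how the local inverse estimate is obtained. You use monotonicity together with the derivative bound $\tau'\geq\frac12\operatorname{sech}^2(M+1)$, whereas the paper uses uniform continuity of $\tau^{-1}$ on $[-\tau(M'),\tau(M')]$ with $\delta<\tau(M')-\tau(M)$ to stop $b_n$ from escaping; both handle the lack of control on $b_n$ equally well.
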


The proof in \Cref{subsec:app-interpolation-inversion} combines uniform continuity away from the endpoints with Markov's inequality.

\begin{theorem}
\label{thm:mm-topologies}
The topologies induced by $\dtau$ and $\dconc$ coincide on the subspace of mm-spaces. Equivalently, for mm-spaces $X$ and $X_n$,
\[
\dtau(X_n,X)\longrightarrow0
\quad\Longleftrightarrow\quad
\dconc(X_n,X)\longrightarrow0
\qquad(n\to\infty).
\]
\end{theorem}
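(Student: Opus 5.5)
Both implications will be proved through the optimal-coupling representation \Cref{eq:dconc-coupling} and the attainment of optimal couplings for $\dtau$ (\Cref{thm:dtau-metric}), working with a single coupling $\pi_n\in\T(\mu_{X_n},\mu_X)$ for each $n$. The proof compares Ky Fan and $L^1$ distances on $\pi_n$: for the direction $\dconc\to0\Rightarrow\dtau\to0$ we push Ky Fan closeness through $\tau$, and for the converse we pull $L^1$ closeness back through $\tau^{-1}$ using \Cref{lem:inversion-tightness} after a median normalization.

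\emph{($\Leftarrow$)} Let $\pi_n$ be optimal for $\dconc(X_n,X)=\epsilon_n\to0$. For $f\in\operatorname{Lip}_1(X_n)$, choose $g\in\overline{\operatorname{Lip}_1(X)}=\operatorname{Lip}_1(X)$ with $\kf^{\pi_n}(f\circ\pr_1,g\circ\pr_2)\le\epsilon_n$ (attainment, up to an arbitrarily small error). Since $\tau$ is $1/2$-Lipschitz with range of length $1$, we obtain
\[d_{1,\pi_n}(\tau\circ f\circ\pr_1,\tau\circ g\circ\pr_2)\le\tfrac12\epsilon_n+\epsilon_n.\]
The endpoint constants in $\Kc_\tau$ are $L^1$-limits of $\tau$ of features, so by \Cref{lem:feature-compactness} the bound extends to the whole compactified feature sets. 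The same argument applies in the reverse direction, so $D_\tau^{\pi_n}\le\tfrac32\epsilon_n\to0$.

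\emph{($\Rightarrow$)} This direction is the main obstacle, since $\tau^{-1}$ is not uniformly continuous near $\pm1/2$. Let $\pi_n$ be optimal for $\dtau$, with $h_n=\dtau(X_n,X)\to0$.

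First take $g\in\operatorname{Lip}_1(X)$ and subtract a median, which is harmless because the family is invariant under constants. By tightness of $\mu_X$ on the fixed space $X$, choose a compact $K_\eta$ with $\mu_X(K_\eta)>1-\eta$; every median-normalized $1$-Lipschitz $g$ satisfies $|g|\le\operatorname{diam}$ of a suitable bounded set containing $K_\eta$ and a median point. More concretely, fix a point $x_0$ and $R$ with $\mu_X(B(x_0,R))>\max(1-\eta,1/2)$. Then $|g(x_0)|\le R$ for a normalized $g$, so $|g|\le 3R$ on $B(x_0,R)$. This gives the tightness hypothesis uniformly in $g$ and $n$, because the second marginal of $\pi_n$ is always $\mu_X$. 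Take $v\in\Kc_\tau(X_n)$ that is $h_n$-close in $L^1(\pi_n)$ to $\tau\circ g\circ\pr_2$. For large $n$, $v$ cannot be an endpoint constant: by tightness, $d_1(\tau\circ g,\pm1/2)$ is bounded below uniformly over normalized $g$. Hence $v=\tau\circ f$ with $f\in\operatorname{Lip}_1(X_n)$. \Cref{lem:inversion-tightness}, applied to a sequence of near-worst $g_n$, then gives $\sup_g\inf_f\kf^{\pi_n}\to0$.

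For the other direction of the Hausdorff distance, take $f\in\operatorname{Lip}_1(X_n)$ and subtract a median $m$ of $f$ under $\mu_{X_n}$. The tightness needed in \Cref{lem:inversion-tightness} is now for $f\circ\pr_1$ under $\pi_n$, and $\mu_{X_n}$ varies with $n$. I would transfer the estimate through the matched partner. Let $g$ be the partner with $d_1(\tau\circ f,\tau\circ g)\le h_n$, where $g$ is not an endpoint constant for large $n$ by the same lower bound, now applied to normalized $f$. Shifting $g$ by $m$ is not available in the $\tau$-coordinate, so I instead argue by contradiction. If a normalized $f_n$ had $|f_n|>M$ on a set of $\pi_n$-mass at least $\eta$, then $\tau\circ f_n$ would lie within $\delta(M)$ of $\pm1/2$ on that set. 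Meanwhile $\tau\circ f_n\ge0$ and $\le0$ each hold on mass $\ge1/2$. Combining this with $L^1$-closeness to $\tau\circ g_n$, and using the uniform $1$-Lipschitz control of $g_n$ on the compact ball of large $\mu_X$-mass, forces $g_n$ to oscillate by more than $2R$ there, which is a contradiction. Once this uniform tightness is established, \Cref{lem:inversion-tightness} finishes the argument, and \Cref{eq:dconc-coupling} yields $\dconc(X_n,X)\to0$.
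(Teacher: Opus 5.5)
Your argument is correct. The implication $\dconc(X_n,X)\to0\Rightarrow\dtau(X_n,X)\to0$ follows the paper's proof essentially verbatim. So does the half of the converse that starts from a median-normalized function on the fixed space $X$; there your large-mass ball $B(x_0,R)$ replaces the paper's compact set from tightness, which is harmless.

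The genuine difference is in the other half of the converse, where the normalized function $f_n\in\operatorname{Lip}_1(X_n)$ lives on the varying space. You prove the tail hypothesis of \Cref{lem:inversion-tightness} for $f_n$ itself, by contradiction. Non-negligible mass where $|f_n|>M$ would, through Markov's inequality and the $L^1$ bound, force the partner $g_n\in\operatorname{Lip}_1(X)$ to take both a very large value and a moderate value inside one ball of large $\mu_X$-mass.

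The paper avoids this detour. The lemma needs the tail bound on only one of the two functions, since the Ky Fan metric is symmetric, so the paper puts it on the partner. The exceptional set $\{|\tau\circ f_n-\tau\circ g_n|\geq1/4\}$ has $\pi_n$-mass at most $4h_n$. Outside it, $f_n\leq0$ forces $g_n<(\log3)/2$, and $f_n\geq0$ forces $g_n>-(\log3)/2$. Hence a ball of $\mu_X$-mass greater than $3/4$ meets both level sets and bounds $|g_n(x_0)|$, which gives tightness of $g_n$ at once.

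Your contradiction step already contains this median transfer, since that is where your moderate value comes from. Routing the bound through $f_n$ therefore only adds a chain of quantifiers ($\eta$, then $R$, then a target level, then the Markov threshold, then $M$), which you would have to write out in full. The paper's route is shorter and gives the bound directly.

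There are two small corrections. First, for the partner of a normalized $f\in\operatorname{Lip}_1(X_n)$, exclude the endpoint constants by the median alone. We have $|\tau\circ f-1/2|\geq1/2$ on $\{f\leq0\}$ and $|\tau\circ f+1/2|\geq1/2$ on $\{f\geq0\}$, so the $L^1$ distance is at least $1/4$. Your phrase ``the same lower bound'' refers back to a bound you justified by tightness, and no tightness on $X_n$ is available at that stage. Second, the ball of large $\mu_X$-mass need not be compact; only its diameter $2R$ is used.
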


\begin{proof}
Suppose that $\dconc(X_n,X)\to0$ as $n\to\infty$. Consider two functions whose Ky Fan distance on a common coupling is less than $\epsilon$. On the set where their difference is at most $\epsilon$, use the $1/2$-Lipschitz property of $\tau$. On the complement, use the diameter $1$ of its range. The $L^1$ distance between the transformed functions is at most $3\epsilon/2$, and the same estimate passes to the $L^1$ closures. Applying it to both directed feature approximations in \Cref{eq:dconc-coupling} gives $\dtau(X_n,X)\to0$ as $n\to\infty$.

Conversely, suppose that $\dtau(X_n,X)\to0$ as $n\to\infty$, and take optimal couplings $\pi_n\in\T(\mu_X,\mu_{X_n})$. It remains to prove
\begin{equation}
\label{eq:mm-kyfan}
(\kf^{\pi_n})_H\bigl(
\operatorname{Lip}_1(X,d_X)\circ\pr_1,
\operatorname{Lip}_1(X_n,d_{X_n})\circ\pr_2
\bigr)\longrightarrow0
\qquad(n\to\infty).
\end{equation}

Assume that \Cref{eq:mm-kyfan} fails. After passing to a subsequence, one of the two directed approximations fails by at least some $\epsilon>0$. In the first case, there are $f_n\in\operatorname{Lip}_1(X,d_X)$ such that
\[
\inf_{g\in\operatorname{Lip}_1(X_n,d_{X_n})}
\kf^{\pi_n}(f_n\circ\pr_1,g\circ\pr_2)\geq\epsilon.
\]
The addition of constants preserves the full Lipschitz family, so we may assume that $0$ is a median of $f_n$. Fix $x_0\in X$, and choose $R>0$ such that $\mu_X(B_R(x_0))>1/2$. This ball meets both $\{f_n\leq0\}$ and $\{f_n\geq0\}$. The $1$-Lipschitz property gives $|f_n(x_0)|\leq R$. By tightness of probability measures \cite[Definition~1.18 and Theorem~1.19]{shioya2016mmg}, for each $\eta>0$ there is a compact set $L\subset X$ such that $\mu_X(L)>1-\eta$. On $L$,
\[
|f_n(x)|\leq R+\sup_{z\in L}d_X(z,x_0).
\]
Thus $f_n\circ\pr_1$ satisfies the tail assumption in \Cref{lem:inversion-tightness}.

Choose $v_n\in\Kc_\tau(X_n)$ such that
\[
d_{1,\pi_n}(\tau\circ f_n\circ\pr_1,v_n\circ\pr_2)
\leq D_\tau^{\pi_n}(X,X_n).
\]
If $v_n$ is the endpoint constant $1/2$, the integrand is at least $1/2$ on $\{f_n\leq0\}$, so the integral is at least $1/4$. The endpoint constant $-1/2$ is excluded in the same way by using $\{f_n\geq0\}$. For every sufficiently large $n$, \Cref{lem:feature-compactness} therefore gives $g_n\in\operatorname{Lip}_1(X_n,d_{X_n})$ such that $v_n=\tau\circ g_n$. Applying \Cref{lem:inversion-tightness} yields
\[
\kf^{\pi_n}(f_n\circ\pr_1,g_n\circ\pr_2)\longrightarrow0
\qquad(n\to\infty),
\]
which contradicts the lower bound.

In the second case, there are $g_n\in\operatorname{Lip}_1(X_n,d_{X_n})$ such that
\[
\inf_{f\in\operatorname{Lip}_1(X,d_X)}
\kf^{\pi_n}(g_n\circ\pr_2,f\circ\pr_1)\geq\epsilon.
\]
We may assume that $0$ is a median of $g_n$. Choose $v_n\in\Kc_\tau(X)$ such that
\[
d_{1,\pi_n}(v_n\circ\pr_1,\tau\circ g_n\circ\pr_2)
\leq D_\tau^{\pi_n}(X,X_n).
\]
If $v_n$ is the endpoint constant $1/2$, the integrand is at least $1/2$ on $\{g_n\leq0\}$, and the integral is at least $1/4$. The endpoint constant $-1/2$ has the same lower bound on $\{g_n\geq0\}$. Thus, for every sufficiently large $n$, there is $f_n\in\operatorname{Lip}_1(X,d_X)$ such that $v_n=\tau\circ f_n$.

Set
\[
E_n\coloneqq
\{(x,y)\mid|\tau(f_n(x))-\tau(g_n(y))|\geq1/4\}.
\]
Then
\[
\pi_n(E_n)\leq4D_\tau^{\pi_n}(X,X_n)\longrightarrow0
\qquad(n\to\infty).
\]
Outside $E_n$, the inequality $g_n\leq0$ implies $f_n<(\log3)/2$, while $g_n\geq0$ implies $f_n>-(\log3)/2$. Therefore,
\[
\begin{aligned}
\mu_X(\{f_n<(\log3)/2\})&\geq\frac12-\pi_n(E_n),\\
\mu_X(\{f_n>-(\log3)/2\})&\geq\frac12-\pi_n(E_n).
\end{aligned}
\]
Fix a ball $B_R(x_0)$ in $X$ with measure greater than $3/4$. For every sufficiently large $n$, this ball meets both level sets. The $1$-Lipschitz property gives
\[
|f_n(x_0)|<R+\frac{\log3}{2}.
\]
Thus $(f_n)$ is uniformly bounded on every compact subset of $X$, and $f_n\circ\pr_1$ satisfies the tail assumption in \Cref{lem:inversion-tightness}. Applying the lemma gives a contradiction in the second case as well.

Neither directed approximation can fail, so \Cref{eq:mm-kyfan} holds. By \Cref{eq:dconc-coupling}, we have $\dconc(X_n,X)\to0$ as $n\to\infty$. This completes the proof.
\end{proof}

\subsection{Comparison on the full class}
\label{subsec:full-comparison}

The preceding normalization works on mm-spaces because adding constants preserves the full Lipschitz family. A general geometric data set need not have this invariance. The classical observable distance still controls $\dtau$ in one direction, but a uniform reverse estimate already fails for one-point spaces.

\begin{theorem}
\label{thm:comparison}
For all $X,Y\in\D$,
\[
0\leq\dtau(X,Y)
\leq\frac{3\dconc(X,Y)-\dconc(X,Y)^2}{2}
\leq1.
\]
On the other hand, there is no function $\omega\colon[0,1]\to[0,+\infty)$ such that $\omega(r)\to0$ as $r\to0$ and
\[
\dconc(X,Y)\leq\omega(\dtau(X,Y))
\]
for all $X,Y\in\D$.
\end{theorem}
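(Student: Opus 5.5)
The upper bound reduces to a pointwise estimate on one optimal coupling, and the nonexistence of $\omega$ will come from the one-point spaces $X_A$ of the introduction.

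\textbf{Upper bound.} Let $r\coloneqq\dconc(X,Y)$ and take a coupling $\pi$ attaining the minimum in \Cref{eq:dconc-coupling}. Fix $f\in F_X$ and $\epsilon>r$. By the definition of the Hausdorff distance, there is $g\in F_Y$ with
\[
\kf^\pi(f\circ\pr_1,g\circ\pr_2)<\epsilon .
\]
Let $E$ be the set where $|f\circ\pr_1-g\circ\pr_2|>\epsilon$; the Ky Fan bound gives $\pi(E)\leq\epsilon$. Off $E$, the $1/2$-Lipschitz property of $\tau$ bounds the integrand by $\epsilon/2$. On $E$, the integrand is at most $1$, the diameter of the range. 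Hence
\[
d_{1,\pi}(\tau\circ f\circ\pr_1,\tau\circ g\circ\pr_2)\leq(1-\pi(E))\frac{\epsilon}{2}+\pi(E)\leq\frac{\epsilon}{2}+\frac{\epsilon}{2}\pi(E)\leq\frac{\epsilon+\epsilon^2}{2}.
\]
This is the crude bound. To reach the stated sharper form $(3r-r^2)/2$, the mass split must be redone more carefully: keeping $(1-\pi(E))\epsilon/2+\pi(E)$ and using $\pi(E)\leq\epsilon$, the worst case $\pi(E)=\epsilon$ gives
\[
\frac{(1-\epsilon)\epsilon}{2}+\epsilon=\frac{3\epsilon-\epsilon^2}{2}.
\]
This expression is increasing on $[0,1]$, so letting $\epsilon\downarrow r$ is legitimate. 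The estimate passes to the pointwise and $L^1$ closures $\Kc_\tau(X)$, $\Kc_\tau(Y)$ by continuity of $d_{1,\pi}$ and \Cref{lem:feature-compactness}; the endpoint constants are limits of real features, so they are covered as well. The argument is symmetric in $X$ and $Y$, which gives $D_\tau^\pi(X,Y)\leq(3r-r^2)/2$, and hence the same bound for $\dtau(X,Y)$. The final inequality $\leq1$ follows because $r\leq1$ (Ky Fan values are at most $1$) and $(3r-r^2)/2\leq1$ on $[0,1]$.

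\textbf{No reverse modulus.} Use the spaces $X_A$. Take $A=\{n\}$ and $B=\{n+1\}$. By \cite[Theorem~1.1]{gds1}, $\dconc(X_A,X_B)=1$. On the one-point space the only coupling is $\delta$, so
\[
\dtau(X_A,X_B)=|\tau(n+1)-\tau(n)|\longrightarrow0\qquad(n\to\infty).
\]
More precisely, the compactified feature sets are $\{\tau(n)\}$ and $\{\tau(n+1)\}$, since a single constant feature has pointwise closure equal to itself. If an $\omega$ as in the statement existed, then
\[
1\leq\omega\bigl(\dtau(X_A,X_B)\bigr)\longrightarrow0,
\]
which is a contradiction. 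The hypothesis $\omega(r)\to0$ as $r\to0$ applies along this sequence since $\dtau(X_A,X_B)>0$, by injectivity of $\tau$. If one prefers not to rely on $\omega$ at values of exactly $0$, this positivity is the only point that needs checking.

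\textbf{Main obstacle.} The only delicate step is the bookkeeping of the exceptional mass in the upper bound. The argument must use the exact Ky Fan condition $\pi(E)\leq\epsilon$ and the monotonicity of $(3\epsilon-\epsilon^2)/2$, and it must make sure the Hausdorff infimum in Ky Fan is approximated before the closures are taken.
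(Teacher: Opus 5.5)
Your proof is correct and is essentially the paper's argument: the same split of the integrand at the Ky Fan exceptional set on an optimal coupling, and the same one-point counterexample, with $n,n+1$ in place of the paper's $n,2n$. Delete the ``crude bound'' line, which is false for $\epsilon<1$: since $(1-\pi(E))\epsilon/2+\pi(E)=\epsilon/2+(1-\epsilon/2)\pi(E)$, the bound $(\epsilon+\epsilon^2)/2$ does not hold, and $(3\epsilon-\epsilon^2)/2$ is the correct bound (weaker, not sharper).
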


\begin{proof}
Let $u,v$ be real-valued functions on a probability space $(W,\xi)$, and put $a\coloneqq\kf^\xi(u,v)$. Suppose first that $a<1$, and take $a<\epsilon<1$. For $B_\epsilon\coloneqq\{|u-v|>\epsilon\}$, we have $\xi(B_\epsilon)\leq\epsilon$. Using the $1/2$-Lipschitz property of $\tau$ outside $B_\epsilon$ and the diameter $1$ of its range on $B_\epsilon$, we obtain
\[
d_{1,\xi}(\tau\circ u,\tau\circ v)
\leq\frac{\epsilon}{2}(1-\xi(B_\epsilon))+\xi(B_\epsilon)
\leq\frac{3\epsilon-\epsilon^2}{2}.
\]
Letting $\epsilon\downarrow a$ gives the estimate. When $a=1$, it follows directly from the range diameter. The function $a\mapsto(3a-a^2)/2$ is increasing on $[0,1]$. Applying the estimate to both directed feature approximations on the optimal coupling in \Cref{eq:dconc-coupling} proves the upper bound.

For the failure of a reverse estimate, let $f_n(*)=n$ and $g_n(*)=2n$ on a one-point space, and put
\[
X_n\coloneqq(\{*\},\{f_n\},\delta_*),\qquad
Y_n\coloneqq(\{*\},\{g_n\},\delta_*).
\]
The coupling is unique, and
\[
\begin{aligned}
\dconc(X_n,Y_n)&=1,\\
\dtau(X_n,Y_n)
&=\frac12\bigl(\tanh(2n)-\tanh n\bigr)\\
&=\frac{e^{2n}(e^{2n}-1)}{(e^{4n}+1)(e^{2n}+1)}
\longrightarrow0
\qquad(n\to\infty).
\end{aligned}
\]
This contradicts the asserted inequality for every candidate $\omega$. This completes the proof.
\end{proof}

\begin{corollary}
\label{cor:full-topologies}
The identity map $(\D,\dconc)\to(\D,\dtau)$ is uniformly continuous, and the $\dconc$ topology is strictly finer than the $\dtau$ topology. Moreover, there is a sequence that converges with respect to $\dtau$ but has no $\dconc$-convergent subsequence.
\end{corollary}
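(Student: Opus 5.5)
Uniform continuity of the identity $(\D,\dconc)\to(\D,\dtau)$ comes straight from the upper bound in \Cref{thm:comparison}. The function $\omega_0(r)\coloneqq(3r-r^2)/2$ is increasing, continuous on $[0,1]$ and vanishes at $0$. Since $\dtau\leq\omega_0(\dconc)$, for every $\epsilon>0$ there is $\delta>0$ with $\dconc(X,Y)<\delta\Rightarrow\dtau(X,Y)<\epsilon$. In particular every $\dtau$-open set is $\dconc$-open, so the $\dconc$ topology is at least as fine as the $\dtau$ topology.

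For strictness and the last assertion, I will exhibit one sequence that does both jobs. Take the one-point spaces $X_n\coloneqq(\{*\},\{f_n\},\delta_*)$ with $f_n(*)=n$, and let $E\coloneqq(\{*\},\{c\},\delta_*)$ be any fixed one-point object. First I will show $X_n$ is $\dtau$-Cauchy and converges. On one-point spaces the coupling is unique, so by \Cref{lem:feature-compactness} the compactified feature set of $X_n$ is the single value $\tau(n)$. Hence $\dtau(X_n,X_m)=|\tau(n)-\tau(m)|\to0$.

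To identify a limit I use \Cref{thm:dtau-geodesic}-style intuition, but directly: the limit should be a one-point object whose compactified feature set is $\{1/2\}$. No real feature gives $\tau$-value $1/2$, so that exact object need not exist, and I must pick the limit more cleverly. The natural candidate is
\[Y\coloneqq(\{*\},\{f_n\mid n\in\mathbb N\},\delta_*).\]
Its pointwise-closed compactified set is $\{\tau(n)\}_n\cup\{1/2\}$, and this is not Hausdorff-close to $\{\tau(n)\}$, since the point $\tau(1)$ stays far away. So I will change the sequence instead. Let $A_n\coloneqq\{k\in\mathbb N\mid k\geq n\}\cup\{0\}$ and $X_n\coloneqq X_{A_n}$, and let $Y\coloneqq X_{\{0\}}$. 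Then $\Kc_\tau(X_n)=\{0\}\cup\{\tau(k)\}_{k\geq n}\cup\{1/2\}$, and its Hausdorff distance to $\{0\}$ is $1/2$. That fails too.

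The right choice is therefore $X_n\coloneqq X_{A_n}$ with $A_n\coloneqq\{n,n+1,\dots\}$ and limit $Y\coloneqq(\{*\},\{g_k\},\delta_*)$, where the features are $g_k(*)=k$ for all $k\in\mathbb N$. Then
\[\Kc_\tau(Y)=\{\tau(k)\}_{k\in\mathbb N}\cup\{1/2\},\qquad \Kc_\tau(X_n)=\{\tau(k)\}_{k\geq n}\cup\{1/2\}.\]
The Hausdorff distance between these is $\tau(n)-\tau(0)$-type, which does not tend to $0$. So I again adjust: I will use a limit not of this form.

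The simplest valid route is to avoid producing an explicit limit at all. A $\dtau$-Cauchy sequence with no $\dconc$-Cauchy subsequence already shows strictness, provided the $\dtau$ limit exists. Existence I would obtain from the two-valued family $X_n\coloneqq(\{*\},\{0,f_n\},\delta_*)$ with $f_n(*)=n$, whose compactified sets are $\{0,\tau(n)\}$. Against the target $W\coloneqq(\{*\},\{0,h\},\delta_*)$ these converge only if $\tau(h)=1/2$, which is impossible. This is the main obstacle: a one-point $\dtau$-limit must realize the endpoint value only as a pointwise limit.

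The resolution is to take the limit $Y$ with feature family $\{0\}\cup\{f_k\mid k\geq1\}$. Then $\Kc_\tau(Y)=\{0,1/2\}\cup\{\tau(k)\}_k$, and the sequence $X_n$ with features $\{0\}\cup\{f_k\mid k\geq n\}$ has the same problem at small $k$. I therefore expect the correct example to use a nontrivial underlying space. Concretely, let the feature $f_n(x)=n x$ on $[0,1]$ with uniform measure and with an extra feature $x$ fixing the metric. As $n$ grows, $\tau\circ f_n$ converges in $L^1$ to $\tfrac12\mathbf 1_{x>0}$, which is a legitimate feature of a limit.

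I would then check $\dconc$-non-convergence via the unique-coupling Ky Fan computation, exactly as in the failure part of \Cref{thm:comparison}: pairwise distances stay equal to $1$ along every subsequence.
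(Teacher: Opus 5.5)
Your first paragraph is fine: the bound $\dtau\le(3\dconc-\dconc^2)/2$ from \Cref{thm:comparison} gives uniform continuity and shows that the $\dconc$ topology is at least as fine. The gap is in the rest. You never produce a sequence that actually converges for $\dtau$. You correctly discard the one-point candidates, but your final example fails as well. With $F_{X_n}=\{x,nx\}$ on $[0,1]$, the sets $\Kc_\tau(X_n)=\{\tau(x),\tau(nx)\}$ approach $\{\tau(x),1/2\}$ in $L^1$. The function $\tfrac12\mathbf 1_{x>0}$ is almost everywhere the endpoint constant $1/2$, which by \Cref{lem:feature-compactness} is never $\tau\circ g$ for a real-valued $g$. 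In fact no $\dtau$-limit exists. Suppose $D_\tau^{\pi_n}(X_n,Y)\to0$. Matching $\tau(nx)$ and using compactness forces $1/2\in\Kc_\tau(Y)$. Hence there are $f_k\in F_Y$ with $\delta_k\coloneqq\|1/2-\tau\circ f_k\|_{L^1(\mu_Y)}>0$ and $\delta_k\to0$. Fix $k$ with $\delta_k<c_0/2$, where $c_0\coloneqq\int_0^1(1/2-\tau(x))\,dx$. For large $n$, the feature $\tau\circ f_k$ can be matched on $\pi_n$ neither with $\tau(x)$, since its distance to $1/2$ would be near $c_0$, nor with $\tau(nx)$, since its distance to $1/2$ would tend to $0$. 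This is your one-point obstruction again, only relocated. Your planned $\dconc$ check by ``the unique-coupling Ky Fan computation'' also does not apply, because couplings on $[0,1]$ are not unique.

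The missing idea is to let the limit and the approximants share a fixed feature family whose $\tau$-closure already contains the endpoint constant, and then perturb by a single escaping feature. Your attempt with features $\{0\}\cup\{f_k\mid k\geq n\}$ failed only because you deleted the small-$k$ features from the approximants; keep them instead. Put $F_\infty\coloneqq\{c_k\mid k\geq1\}$, $Z_\infty\coloneqq(\{*\},F_\infty,\delta_*)$, and $Z_n\coloneqq(\{*\},F_\infty\cup\{c_{n+1/2}\},\delta_*)$. Then $\Kc_\tau(Z_n)=\Kc_\tau(Z_\infty)\cup\{\tau(n+1/2)\}$, so $\dtau(Z_n,Z_\infty)\leq\tau(n+1/2)-\tau(n)\to0$. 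On a point the Ky Fan metric is $\min\{|r-s|,1\}$. So $c_{n+1/2}$ is at distance $1/2$ from every integer constant and at distance $1$ from $c_{m+1/2}$ for $m\neq n$. Thus $\dconc(Z_n,Z_\infty)=\dconc(Z_n,Z_m)=1/2$. This one example gives both strictness and the absence of a $\dconc$-convergent (indeed $\dconc$-Cauchy) subsequence, and it is the paper's argument.
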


\begin{proof}
Uniform continuity follows from \Cref{thm:comparison}. Let $c_r$ denote the function with value $r$ on a one-point space, and put
\[
F_\infty\coloneqq\{c_k\mid k=1,2,\ldots\},\qquad
Z_\infty\coloneqq(\{*\},F_\infty,\delta_*).
\]
For $n\geq1$, set
\[
Z_n\coloneqq(\{*\},F_\infty\cup\{c_{n+1/2}\},\delta_*).
\]
Then
\[
\dtau(Z_n,Z_\infty)
\leq\tau\left(n+\frac12\right)-\tau(n)
=\frac{e^{2n}(e-1)}{(e^{2n+1}+1)(e^{2n}+1)}
\longrightarrow0
\qquad(n\to\infty).
\]
The Ky Fan distance on a one-point probability space is $\min\{|r-s|,1\}$. Therefore,
\[
\dconc(Z_n,Z_\infty)=\frac12,
\qquad
\dconc(Z_n,Z_m)=\frac12\quad(n\neq m).
\]
Thus $(Z_n)$ has no $\dconc$-convergent subsequence. This completes the proof.
\end{proof}

This example shows that the bounded coordinate preserves the classical mm-space topology while compressing feature values at infinity on the full class. In \Cref{sec:compact-screen-box}, we incorporate the metric induced by the same coordinate into the objects and construct compact screens whose finite observations can be studied with the Box distance.

\section{Compact Screens and Box Geometry}
\label{sec:compact-screen-box}

The compactified observable distance compares transformed features on couplings without changing the metric of either object. We instead complete the support for the metric induced by the transformed features and thereby obtain objects whose features take values in one fixed interval. This construction yields Box geodesics, compact finite-feature layers, a Polish class, and direct comparisons with the original metric.

\subsection{Construction of compact screens}
\label{subsec:compact-screen-construction}

Let $K=[-1/2,1/2]$ be the interval fixed in the introduction. For $X\in\D$, put
\[
d_X^\tau(x,x')\coloneqq
\sup_{f\in F_X}|\tau(f(x))-\tau(f(x'))|.
\]
Denote the completion of $(X,d_X^\tau)$ by $\widehat X^\tau$ and the canonical map by $e_X^\tau\colon X\to\widehat X^\tau$. Every $\tau\circ f$ extends uniquely to a continuous function $\widehat{\tau\circ f}$ on $\widehat X^\tau$. Set
\[
X^\tau\coloneqq\supp((e_X^\tau)_*\mu_X).
\]

\begin{definition}[Compact screen]
\label{def:compact-screen}
The compact screen of a geometric data set $X$ is
\[
\mathsf C_\tau(X)\coloneqq
\left(
X^\tau,
\{\widehat{\tau\circ f}|_{X^\tau}\mid f\in F_X\},
(e_X^\tau)_*\mu_X|_{X^\tau}
\right).
\]
\end{definition}

\begin{lemma}
\label{lem:compact-screen-gds}
For every $X\in\D$, the compact screen $\mathsf C_\tau(X)$ is a geometric data set, and all its features take values in $K$. The construction is well-defined on isomorphism classes.
\end{lemma}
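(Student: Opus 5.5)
The plan is to obtain the first two assertions from \Cref{lem:feature-realization} applied to the triple $(X,\mu_X,H)$ with $H\coloneqq\{\tau\circ f\mid f\in F_X\}$, and then to prove well-definedness by transporting the construction along an isomorphism.

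\emph{Hypotheses of \Cref{lem:feature-realization}.} The space $Z=X$ is separable and metrizable, since it is complete and separable for $d_X$. Each $\tau\circ f$ is continuous on $(X,d_X)$ because $f$ is $1$-Lipschitz and $\tau$ is continuous. The pseudometric $d^H$ is exactly $d_X^\tau$. It is finite, being bounded by $1$, and it satisfies
\[
d_X^\tau\leq\tfrac12 d_X,
\]
because $\tau$ is $1/2$-Lipschitz. Hence the canonical map from $(X,d_X)$ to the metric quotient is $1/2$-Lipschitz. It is therefore continuous and in particular Borel, and the quotient is separable as a continuous image of a separable space. In fact $d_X^\tau$ separates points, since $\tau$ is injective and $d_X$ is a metric, so the quotient is $(X,d_X^\tau)$ itself. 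The completion, the canonical map $q=e_X^\tau$, and the support $Z_H^\nu=X^\tau$ then coincide with the objects in \Cref{def:compact-screen}.

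\emph{Conclusion of \Cref{lem:feature-realization}.} The lemma shows that $\mathsf C_\tau(X)$ is a geometric data set. Every member of $H$ takes values in the closed interval $K$, so every extended feature takes values in $K$. The lemma also gives $e_X^\tau(X)\subset X^\tau$, because $\mu_X$ has full support and $e_X^\tau$ is continuous. This inclusion is not needed here but will be useful later.

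\emph{Well-definedness.} Let $\phi\colon X\to X'$ be an isomorphism. As in the proof of \Cref{thm:dtau-metric}, $\phi$ is a surjective isometry for the feature metrics. Moreover, $\overline{F_{X'}}\circ\phi=\overline{F_X}$. Applying the continuous map $\tau$ commutes with pointwise closures, and the pointwise closure of $\{\tau\circ f\}$ induces the same supremum metric. Hence
\[
d_{X'}^\tau(\phi x,\phi x')=d_X^\tau(x,x').
\]
So $\phi$ is an isometry of $(X,d_X^\tau)$ onto $(X',d_{X'}^\tau)$ and extends to an isometry $\widehat\phi$ of the completions. This extension carries $(e_X^\tau)_*\mu_X$ to $(e_{X'}^\tau)_*\mu_{X'}$, and hence $X^\tau$ onto $X'^\tau$.

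\emph{Matching the feature families.} It remains to check that $\widehat\phi$ identifies the pointwise closures of the two feature families. For $f'\in F_{X'}$, we have $f'\circ\phi\in\overline{F_X}$, so $\tau\circ f'\circ\phi$ is a pointwise limit of functions $\tau\circ f_n$ with $f_n\in F_X$. These functions are uniformly $1$-Lipschitz for $d_X^\tau$ on the dense set $e_X^\tau(X)$. Pointwise convergence on a dense set of an equi-Lipschitz family therefore passes to pointwise convergence of the extensions on $X^\tau$. This equicontinuity step is where care is needed, and I expect it to be the main obstacle. It gives $\widehat{\tau\circ f'}\circ\widehat\phi\in\overline{F_{\mathsf C_\tau(X)}}$, and the symmetric argument with $\phi^{-1}$ gives the reverse inclusion. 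Hence $\widehat\phi|_{X^\tau}$ is an isomorphism in the sense of \Cref{def:gds-isomorphism}.
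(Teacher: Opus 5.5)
Your proof is correct and follows essentially the same route as the paper: apply \Cref{lem:feature-realization} to $(X,\mu_X,\{\tau\circ f\mid f\in F_X\})$ using $d_X^\tau\leq d_X/2$, and then transport the construction along the isometric extension of an isomorphism to the completions. Your equicontinuity argument spells out the step that the paper compresses into the phrase ``this extension preserves the feature families.'' The paper also notes that $e_X^\tau(X)\subset X^\tau$ forces $X^\tau=\widehat X^\tau$, but the statement does not require this.
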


\begin{proof}
Since $d_X^\tau\leq d_X/2$, the map $e_X^\tau$ is continuous, and its image and metric completion are separable. Apply \Cref{lem:feature-realization} to
\[
(X,\mu_X,\{\tau\circ f\mid f\in F_X\}).
\]
This gives the geometric data set in \Cref{def:compact-screen} and shows that every feature takes values in $K$. Since $\mu_X$ has full support and $e_X^\tau$ is continuous, \Cref{lem:feature-realization} also gives $e_X^\tau(X)\subset X^\tau$. The left-hand side is dense in $\widehat X^\tau$, and therefore $X^\tau=\widehat X^\tau$.

An isomorphism from $X$ to $Y$ is an isometry with respect to $d_X^\tau$ and $d_Y^\tau$. It extends uniquely to a surjective isometry between the completions, and this extension preserves the feature families, pushforward measures, and their supports. This completes the proof.
\end{proof}

If $Z\in\DK$, then $\overline{F_Z}$ also consists of $K$-valued functions because pointwise limits remain in the closed interval $K$.

\begin{lemma}
\label{lem:K-feature-compactness}
If $Z\in\DK$, then $\overline{F_Z}$ is compact with respect to both the $L^1(\mu_Z)$ metric and the Ky Fan metric.
\end{lemma}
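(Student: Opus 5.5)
The plan is to reduce to \Cref{lem:bounded-lipschitz-compactness} and then compare the two metrics on a uniformly bounded family. Every $f\in F_Z$ is $1$-Lipschitz on $(Z,d_Z)$ and takes values in $K$, so $|f|\leq 1/2$. We apply \Cref{lem:bounded-lipschitz-compactness} with $\mathcal A=F_Z$, $B=1/2$ and $\ell=1$; the space $(Z,d_Z)$ is complete and separable and $\mu_Z$ has full support by \Cref{def:gds}. The lemma then gives three things: the $L^1(\mu_Z)$-closure of $F_Z$ is compact, each element has a unique continuous $1$-Lipschitz representative, and under this identification the $L^1$-closure coincides with the pointwise closure $\overline{F_Z}$. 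Hence $\overline{F_Z}$, viewed in $L^1(\mu_Z)$, is compact. One detail needs care: the identification must be injective on $\overline{F_Z}$. Two continuous functions that agree $\mu_Z$-a.e. agree everywhere, because $\mu_Z$ has full support. So the $L^1$ metric is a genuine metric on $\overline{F_Z}$ and no pointwise distinct features are merged.

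For the Ky Fan metric, I would show that on $\overline{F_Z}$ the Ky Fan topology and the $L^1$ topology coincide. Then compactness transfers through the identity map, which is a homeomorphism between the two metrics.
\begin{itemize}
\item Since all functions take values in $K$, we have $|u-v|\leq 1$. Markov's inequality gives $\mu_Z(\{|u-v|>\epsilon\})\leq d_{1,\mu_Z}(u,v)/\epsilon$. Taking $\epsilon=d_{1,\mu_Z}(u,v)^{1/2}$ yields $\kf^{\mu_Z}(u,v)\leq d_{1,\mu_Z}(u,v)^{1/2}$.
\item In the other direction, splitting the integral over $\{|u-v|\leq\epsilon\}$ and its complement, and using $|u-v|\leq1$ on the complement, gives $d_{1,\mu_Z}(u,v)\leq\epsilon+\epsilon$ for every admissible $\epsilon$. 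Hence $d_{1,\mu_Z}\leq 2\kf^{\mu_Z}$.
\end{itemize}
Thus the identity $(\overline{F_Z},d_{1,\mu_Z})\to(\overline{F_Z},\kf^{\mu_Z})$ is uniformly continuous with a uniformly continuous inverse. The continuous image of a compact set is compact, so $\overline{F_Z}$ is Ky Fan compact. Equivalently, since both are metrics, sequential compactness transfers directly.

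The only step needing attention is the bookkeeping of the identification in \Cref{lem:bounded-lipschitz-compactness}. We must check that the pointwise closure $\overline{F_Z}$ consists of $1$-Lipschitz, $K$-valued functions, which holds because pointwise limits preserve both properties. We must also check that each $L^1$-class in the closure is represented by exactly one such function. Once this is settled, both compactness claims are immediate, and I expect no genuine obstacle.
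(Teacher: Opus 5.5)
Your proof is correct and essentially the paper's: both rest on the bounded-Lipschitz compactness theory together with the comparisons $d_{1,\mu_Z}\leq 2\kf^{\mu_Z}$ and $\kf^{\mu_Z}\leq\sqrt{d_{1,\mu_Z}}$, which make the two topologies on $\overline{F_Z}$ coincide. The only difference is the direction of transfer: the paper takes Ky Fan compactness directly from \cite[Lemma~2.2 and Remark~2.3]{hanika2022gds} and deduces $L^1$ compactness, whereas you obtain $L^1$ compactness from \Cref{lem:bounded-lipschitz-compactness} (itself proved from that same citation) and deduce Ky Fan compactness.
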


\begin{proof}
Compactness for the Ky Fan metric follows from \cite[Lemma~2.2 and Remark~2.3]{hanika2022gds}. For $K$-valued functions $f$ and $g$,
\[
d_{1,\mu_Z}(f,g)\leq2\kf^{\mu_Z}(f,g),\qquad
\kf^{\mu_Z}(f,g)\leq\sqrt{d_{1,\mu_Z}(f,g)}.
\]
The two metrics induce the same topology on $\overline{F_Z}$, which proves compactness for $L^1(\mu_Z)$. This completes the proof.
\end{proof}

\subsection{Box geodesics}
\label{subsec:box-geodesics}

Constructing a geodesic from an optimal realization of the Box distance requires a best matching feature on a closed relation. The following attainment result allows us to use the correspondence consisting of all optimal pairs.

\begin{lemma}
\label{lem:box-match-attainment}
Let $X,Y\in\D$, and let $S\subset X\times Y$ be a nonempty closed set. For every $f\in\overline{F_X}$, there is $g\in\overline{F_Y}$ such that
\[
d_{\infty,S}(f\circ\pr_1,g\circ\pr_2)
=\inf_{g'\in\overline{F_Y}}
d_{\infty,S}(f\circ\pr_1,g'\circ\pr_2).
\]
The analogous assertion holds after interchanging $X$ and $Y$.
\end{lemma}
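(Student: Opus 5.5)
The plan is a compactness argument on a minimizing sequence. The family $\overline{F_Y}$ is not uniformly bounded, so boundedness has to come from the constraint on $S$ together with the uniform Lipschitz control of features.

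First I dispose of the degenerate case. Put
\[c\coloneqq\inf_{g'\in\overline{F_Y}}d_{\infty,S}(f\circ\pr_1,g'\circ\pr_2)\in[0,+\infty].\]
If $c=+\infty$, every $g\in\overline{F_Y}$ attains the infimum, and $\overline{F_Y}\supset F_Y$ is nonempty. So assume $c<\infty$. Choose $g_n\in\overline{F_Y}$ with
\[d_{\infty,S}(f\circ\pr_1,g_n\circ\pr_2)\leq c+1/n.\]

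Next I obtain pointwise bounds on all of $Y$. Since $S\neq\varnothing$, fix $(x_0,y_0)\in S$; then $|g_n(y_0)|\leq|f(x_0)|+c+1$ for every $n$. Every element of $\overline{F_Y}$ is $1$-Lipschitz on $(Y,d_Y)$, because pointwise limits of $1$-Lipschitz functions are $1$-Lipschitz. Hence
\[|g_n(y)|\leq|f(x_0)|+c+1+d_Y(y,y_0)\qquad(y\in Y).\]
So $(g_n)$ is pointwise bounded and equicontinuous on the separable space $Y$. Take a countable dense set $D\subset Y$. A diagonal argument gives a subsequence converging at every point of $D$, and equicontinuity upgrades this to pointwise convergence on all of $Y$, in the standard Arzel\`a--Ascoli manner. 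Let $g$ be the pointwise limit. Since $\overline{F_Y}$ is closed in the topology of pointwise convergence, it contains limits of pointwise convergent sequences from it, so $g\in\overline{F_Y}$.

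Finally I verify attainment. For each $(x,y)\in S$,
\[|f(x)-g(y)|=\lim_{n}|f(x)-g_n(y)|\leq\lim_n(c+1/n)=c.\]
Hence $d_{\infty,S}(f\circ\pr_1,g\circ\pr_2)\leq c$, and the reverse inequality holds by definition of $c$. The interchanged assertion is the same argument with the roles of $X$ and $Y$ swapped, using $\overline{F_X}$ and the projection to $X$.

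The only real obstacle is the lack of a uniform bound on $\overline{F_Y}$, since its features may escape to infinity. This is handled by anchoring $g_n$ at the single point $y_0\in\pr_2(S)$ and propagating the bound through the $1$-Lipschitz property. That is exactly where nonemptiness of $S$ and finiteness of $c$ are used.
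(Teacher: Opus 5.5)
Your proof is correct. It differs from the paper in that the paper gives no argument of its own here: it quotes the lemma as the constant-sequence case $S_n=S$ of \cite[Lemma~5.18]{gds1}, which is formulated for a sequence of closed sets. You instead give a self-contained compactness argument, in four steps:
\begin{enumerate}
\item Anchor a minimizing sequence at one point $(x_0,y_0)\in S$.
\item Use that every element of $\overline{F_Y}$ is $1$-Lipschitz for $d_Y$, which gives local boundedness.
\item Use separability of $Y$ and a diagonal argument to extract a pointwise limit; pointwise closedness keeps this limit in $\overline{F_Y}$.
\item Pass the bound $|f(x)-g(y)|\leq c$ to the limit at each point of $S$.
\end{enumerate}
Your version shows that only nonemptiness of $S$ is used, not its closedness, and it treats the degenerate value $c=+\infty$ explicitly. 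The cited result instead covers the moving-set situation, which is more than this lemma needs.

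You can also drop the diagonal argument and the separability of $Y$. The set $\{g\in\overline{F_Y}\mid|g(y_0)|\leq|f(x_0)|+c+1\}$ is compact in the topology of pointwise convergence by Tychonoff's theorem. On it, $g\mapsto d_{\infty,S}(f\circ\pr_1,g\circ\pr_2)$ is lower semicontinuous as a supremum of continuous evaluations, so it attains its minimum, and that minimum equals $c$.
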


This is the constant-sequence case $S_n=S$ of \cite[Lemma~5.18]{gds1}.

\begin{theorem}
\label{thm:box-geodesic}
The metric space $(\DK,\Box)$ is geodesic, and its geodesics may be taken in $\DK$. More generally, if $X,Y\in\D$ and $\Box(X,Y)<1$, then there is a constant-speed Box geodesic from $X$ to $Y$ in $\D$.
\end{theorem}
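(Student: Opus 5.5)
Take $h\coloneqq\Box(X,Y)<1$ and, by \cite[Theorem~5.14]{gds1}, fix an optimal pair $(\pi,S)$ with $1-\pi(S)\leq h$ and $2(d_{\infty,S})_H(\overline{F_X}\circ\pr_1,\overline{F_Y}\circ\pr_2)\leq h$. Since $h<1$, $\pi(S)>0$. Set $\delta\coloneqq(d_{\infty,S})_H(\cdots)\leq h/2$. By \Cref{lem:box-match-attainment}, each $f\in\overline{F_X}$ has a partner $g\in\overline{F_Y}$ with $d_{\infty,S}(f\circ\pr_1,g\circ\pr_2)\leq\delta$, and conversely. Let
\[R\coloneqq\{(f,g)\in\overline{F_X}\times\overline{F_Y}\mid d_{\infty,S}(f\circ\pr_1,g\circ\pr_2)\leq\delta\}.\]
Both projections of $R$ are surjective. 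For $t\in(0,1)$, let $H_t$ be the family of functions $(1-t)f\circ\pr_1+tg\circ\pr_2$, $(f,g)\in R$, on $Z\coloneqq\supp\pi$. We interpolate linearly here, not in the $\tau$-coordinate, since Box compares features in sup norm on $S$. Then $d^{H_t}\leq(1-t)d_X\circ\pr_1+t\,d_Y\circ\pr_2$. This bound makes the quotient map continuous and the quotient separable, so \Cref{lem:feature-realization} yields $X_t\in\D$. If $X,Y\in\DK$, every member of $H_t$ is $K$-valued, and the last clause of \Cref{lem:feature-realization} puts $X_t$ in $\DK$. Set $X_0=X$ and $X_1=Y$.

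Fix $0\leq s<t\leq1$. The canonical maps $q_s,q_t$ from $Z$ (for $s=0$ use $\pr_1$, for $t=1$ use $\pr_2$) give the coupling $(q_s,q_t)_*\pi$. As closed set, take the closure $S_{st}$ of $(q_s,q_t)(S\cap Z)$; its mass is at least $\pi(S)\geq1-h$. For $(f,g)\in R$, the features $(1-s)f+sg$ and $(1-t)f+tg$ differ by $(t-s)(g-f)$, which has sup at most $(t-s)\delta$ on $S$. By continuity of the extended features this bound passes to $S_{st}$, and the closures of the feature families only require pointwise limits, which preserve it. Hence
\[\Box(X_s,X_t)\leq\max\{h,2(t-s)\delta\}.\]
This is not linear in $t-s$: the mass term $1-\pi(S)$ does not scale. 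This is the main obstacle.

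To fix it, I would exploit that the first term in \Cref{eq:box-formula} may be traded by shrinking the closed set. Optimality forces $1-\pi(S)=2\delta=h$, or else one term can be improved. So the defect of mass must also be interpolated. My first attempt is to choose a closed $S_{st}\subset Z$ with mass $1-(t-s)h$ by a $\pi$-measurable interpolation. I would use the full set $Z$ on the ``already matched'' portion of mass and $S$ elsewhere, with features glued accordingly. Concretely, I would define $X_t$ on a space that splits off a $t$-fraction of the complement of $S$ where the features agree with $Y$'s, and the rest where they agree with $X$'s. This amounts to interpolating in measure by a disjoint decomposition $Z\setminus S=A_t\sqcup B_t$ with $\pi(A_t)=t\pi(Z\setminus S)$, nested in $t$ when $\pi$ is non-atomic. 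Atoms would be handled by first passing to $Z\times I$ with $\pi\otimes\lambda$, which does not change the isomorphism class after realization.

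With the nested decomposition, the estimate becomes $\Box(X_s,X_t)\leq\max\{(t-s)h,2(t-s)\delta\}=(t-s)h$. The triangle inequality through $X_s,X_t$, exactly as in the proof of \Cref{thm:dtau-geodesic}, then upgrades all inequalities to equalities. For $\DK$, $\Box\leq1$ always holds. The case $\Box(X,Y)=1$ still needs a path; I would try to reduce it by the same construction with $\pi(S)$ possibly $0$ but $\delta\leq1/2$ on $K$-valued data, taking $S=Z$ and accepting the mass term only through the nested decomposition.
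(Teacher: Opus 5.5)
\textbf{The case $h<1$.} Once repaired, your construction for $h<1$ is the paper's. Splitting $(Z\setminus S)\times I$ at height $t$ and realizing gives the paper's measure $\pi_{\mathrm g}+(1-t)\mu_X^{\mathrm b}+t\mu_Y^{\mathrm b}$ on $S\sqcup X\sqcup Y$. Likewise, $(q_s,q_t)_*(\pi\otimes\lambda)$ is the paper's coupling: diagonal on the three pieces, and $\pi_{\mathrm b}$ on the slice $[s,t)$.

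There are two checks you skip.
\begin{enumerate}
\item The glued features are only piecewise continuous. So \Cref{lem:feature-realization} has to be applied on the topological disjoint union of the pieces, not on $Z$.
\item Your bound $d^{H_t}\leq(1-t)d_X+t\,d_Y$ no longer covers distances between a point carrying $g(y)$ and a point carrying $f(x')$. These are finite only because $S\neq\varnothing$: for $(x_0,y_0)\in S$ you get $|g(y)-f(x')|\leq d_Y(y,y_0)+\delta+d_X(x_0,x')$, uniformly over $(f,g)\in R$. This is exactly where $h<1$ is used.
\end{enumerate}
Also, your claim that optimality forces $1-\pi(S)=2\delta=h$ is false. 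For one-point spaces with constant features $0$ and $c\in(0,1/2)$, the only optimum is $S=\{(*,*)\}$, with $1-\pi(S)=0<2c=h$. You never actually use this claim, so it does no harm.

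\textbf{The genuine gap: $\Box(X,Y)=1$ in $\DK$.} The first sentence of the statement covers this case, and your suggestion does not handle it. The bound $\delta\leq 1/2$ is false for $K$-valued data. Linear interpolation with $S=Z$ only gives $\Box(X_s,X_t)\leq\min\{1,2(t-s)\delta\}$. Take the one-point spaces with constant features $-1/2$ and $1/2$. Then $\delta=1$, and the interpolants are the one-point spaces with constant $t-1/2$. This gives $\Box(X_0,X_{1/2})=\Box(X_{1/2},X_1)=1$, while $\Box(X_0,X_1)=1$, so this path is not a geodesic.

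What works is to send all the mass through your split.
\begin{enumerate}
\item Take $S=\varnothing$ and use all pairs in $\overline{F_X}\times\overline{F_Y}$.
\item Realize on $X\sqcup Y$ with measure $(1-t)\mu_X+t\mu_Y$, feature $f$ on the $X$-component and $g$ on the $Y$-component.
\item With no base point in $S$, the cross-component pseudometric is finite only because every feature is $K$-valued and $K$ has diameter $1$. This is also why the statement restricts unbounded features to $h<1$.
\item Couple the components diagonally with total mass $1-(t-s)$, and arbitrarily on the remaining mass $t-s$. This gives $\Box(X_s,X_t)\leq t-s$.
\end{enumerate}
The triangle inequality then forces equality, as in your main case.
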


\begin{proof}
Put $h\coloneqq\Box(X,Y)$. If $h=0$, take the constant path. Suppose that $0<h<1$, and take a coupling $\pi$ and a closed set $S$ attaining \Cref{eq:box-formula}. Since the cost of the empty set is $1$, the set $S$ is nonempty. Put
\[
\varepsilon\coloneqq1-\pi(S),\qquad
\delta\coloneqq(d_{\infty,S})_H
(\overline{F_X}\circ\pr_1,\overline{F_Y}\circ\pr_2).
\]
Then $\varepsilon\leq h$ and $2\delta\leq h$. By \Cref{lem:box-match-attainment}, both coordinate projections of
\[
\mathcal R\coloneqq
\{(f,g)\in\overline{F_X}\times\overline{F_Y}\mid
d_{\infty,S}(f\circ\pr_1,g\circ\pr_2)\leq\delta\}
\]
are surjective.

Consider the topological disjoint union $E\coloneqq S\sqcup X\sqcup Y$. Let $\pi_{\mathrm g}$ be the restriction of $\pi$ to $S$, let $\pi_{\mathrm b}$ be its restriction to the complement of $S$, and denote the marginals of $\pi_{\mathrm b}$ by $\mu_X^{\mathrm b}$ and $\mu_Y^{\mathrm b}$. For $t\in[0,1]$, put
\[
\nu_t\coloneqq\pi_{\mathrm g}+(1-t)\mu_X^{\mathrm b}+t\mu_Y^{\mathrm b}.
\]
For $(f,g)\in\mathcal R$, define a function on $E$ by
\[
k_t^{f,g}(z)\coloneqq
\begin{cases}
(1-t)f(x)+tg(y),&z=(x,y)\in S,\\
f(x),&z=x\in X,\\
g(y),&z=y\in Y.
\end{cases}
\]
Let $d_t$ be the pseudometric induced by these functions. On $S$, it is bounded above by $(1-t)d_X+td_Y$, and on the endpoint components it is bounded above by $d_X$ and $d_Y$, respectively. Distances between $S$ and either endpoint component are finite and controlled by the endpoint metrics and $\delta$. More explicitly, fix $(x_0,y_0)\in S$. For $x$ in the $X$-component and $y$ in the $Y$-component,
\[
d_t(x,y)\leq d_X(x,x_0)+\delta+d_Y(y_0,y).
\]
Thus $d_t$ is finite and its metric quotient is separable. Apply \Cref{lem:feature-realization} to $(E,\nu_t,\{k_t^{f,g}\mid(f,g)\in\mathcal R\})$ to obtain a geometric data set $X_t$. Surjectivity of both projections of $\mathcal R$ shows that $X_0$ and $X_1$ are isomorphic to $X$ and $Y$, respectively.

Let $0\leq s<t\leq1$. Combine the diagonal mass on $S$, mass $1-t$ on the $X$-component, mass $s$ on the $Y$-component, and mass $t-s$ between the endpoint components induced by $\pi_{\mathrm b}$. This gives a coupling of $\nu_s$ and $\nu_t$. The closures of the three diagonal components have measure at least $1-(t-s)\varepsilon$. Features corresponding to the same $(f,g)$ have uniform distance at most $(t-s)\delta$ there. The common Lipschitz estimate and a diagonal subsequence preserve this bound under pointwise closure. Therefore,
\[
\Box(X_s,X_t)
\leq\max\{(t-s)\varepsilon,2(t-s)\delta\}
\leq(t-s)h.
\]
Apply this estimate to the three intervals with endpoints $X,X_s,X_t,Y$. The triangle inequality gives
\[
h\leq\Box(X,X_s)+\Box(X_s,X_t)+\Box(X_t,Y)
\leq sh+(t-s)h+(1-t)h=h.
\]
All the upper bounds are equalities. If $X,Y\in\DK$, then the convexity of $K$ implies that every feature constructed above is $K$-valued, so the path lies in $\DK$.

It remains to consider $X,Y\in\DK$ with $h=1$. Equip the disjoint union $X\sqcup Y$ with the measure $(1-t)\mu_X+t\mu_Y$. For every $(f,g)\in\overline{F_X}\times\overline{F_Y}$, take the function equal to $f$ on the $X$-component and to $g$ on the $Y$-component. Since $K$ has diameter $1$, these functions induce a finite separable pseudometric. Taking its metric quotient, completion, and support gives $X_t\in\DK$. Couple total diagonal mass $1-(t-s)$ on the two components and use a product coupling between the components for the remaining mass. Then
\[
\Box(X_s,X_t)\leq t-s.
\]
The equality $\Box(X,Y)=1$ and the triangle inequality again force equality. This completes the proof.
\end{proof}

\subsection{Finite-feature layers and Polishness}
\label{subsec:screened-polish}

When the number of features is fixed, each object can be represented by a probability measure on a compact cube. Compactness of these finite-dimensional layers combines with approximation by finite-feature quotients to give separability.

\begin{definition}[Finite-feature layer]
\label{def:bounded-measurements}
For $N\in\mathbb N$, let $\DM_K(N)$ be the set of all elements of $\DK$ with at most $N$ features. This set is called the \emph{finite-feature layer} of order $N$.
\end{definition}

\begin{lemma}
\label{lem:bounded-measurement-compact}
For every $N\in\mathbb N$, the set $\DM_K(N)$ is Box compact.
\end{lemma}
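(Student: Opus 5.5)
We will represent each element of $\DM_K(N)$ by a Borel probability measure on the cube $K^N$, and then transfer weak compactness of $\mathcal P(K^N)$ to Box compactness. Let $Z\in\DM_K(N)$ with features $f_1,\dots,f_k$, where $k\leq N$. If $k<N$, we repeat $f_k$ so that there are exactly $N$ entries. Define $\Phi_Z\coloneqq(f_1,\dots,f_N)\colon Z\to K^N$.

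Since $d_Z=\max_i|f_i(x)-f_i(x')|$, the map $\Phi_Z$ is an isometric embedding into $(K^N,\ell^\infty)$. Its image is closed because $Z$ is complete. Hence $Z$ is isomorphic to
\[
\bigl(\supp\nu,\{p_i|_{\supp\nu}\},\nu\bigr),\qquad \nu\coloneqq(\Phi_Z)_*\mu_Z,
\]
where the $p_i$ are the coordinate projections. Conversely, every $\nu\in\mathcal P(K^N)$ yields such an element of $\DM_K(N)$. The duplicated coordinates do not change the feature family as a set. The map $\nu\mapsto Z_\nu$ is therefore surjective onto $\DM_K(N)$. Because $\mathcal P(K^N)$ is weakly compact by Prokhorov's theorem, it suffices to show that $\nu\mapsto Z_\nu$ is continuous from the weak topology (metrized by the Prokhorov metric $d_P$) into $(\DK,\Box)$.

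For continuity, we take $\nu,\nu'$ with $d_P(\nu,\nu')<\epsilon$. Strassen's theorem gives a coupling $\gamma$ of $\nu$ and $\nu'$ with $\gamma(\{|a-b|_\infty>\epsilon\})\leq\epsilon$. We set
\[
S\coloneqq\{(a,b)\in\supp\nu\times\supp\nu'\mid |a-b|_\infty\leq\epsilon\}.
\]
This set is closed, and $\gamma(S)\geq1-\epsilon$ because $\gamma$ is concentrated on $\supp\nu\times\supp\nu'$. On $S$ we pair $p_i$ with $p_i$, which gives $|p_i(a)-p_i(b)|\leq\epsilon$. The directed Hausdorff distance in \Cref{eq:box-formula} is therefore at most $\epsilon$, and so $\Box(Z_\nu,Z_{\nu'})\leq\max\{\epsilon,2\epsilon\}=2\epsilon$. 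Thus $\DM_K(N)$ is the continuous image of a compact space, and it is Box compact.

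The main obstacle is checking that the representation is faithful. We must be sure that closing the feature family pointwise adds nothing problematic: for finitely many features, $\overline{F_Z}=F_Z$ is already closed. We must also be sure that restricting to $\supp\nu$ gives a geometric data set in the sense of \Cref{def:gds}. The induced metric is the restricted $\ell^\infty$ metric, which is complete and separable on the closed set $\supp\nu$, and $\nu$ has full support there by construction. Finally, the cases of different feature counts $k<N$ are handled uniformly by the repetition trick, because equal features merge into one.
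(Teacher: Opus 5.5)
Your proof is correct and follows the route the paper indicates. The paper only invokes the case $R=1/2$ of \cite[Lemma~4.5]{gds2}, and it describes that argument as representing each member of a finite-feature layer by a probability measure on a compact cube, which is your reduction to $K^N$ and Prokhorov compactness; your Strassen-coupling estimate $\Box(Z_\nu,Z_{\nu'})\leq 2\epsilon$ spells out the continuity step that the citation leaves implicit.
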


This is the case $R=1/2$ of \cite[Lemma~4.5]{gds2}.

\begin{theorem}
\label{thm:screened-polish}
The metric space $(\DK,\Box)$ is complete and separable.
\end{theorem}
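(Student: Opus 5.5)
Completeness and separability will be handled separately. Both rest on the facts that the Box distance is already a complete metric on $\D$ by \cite[Theorem~1.4]{gds1}, and that each layer $\DM_K(N)$ is Box compact by \Cref{lem:bounded-measurement-compact}.

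\emph{Completeness.} It suffices to show that $\DK$ is Box closed in $\D$. Let $Z_n\in\DK$ and $Z\in\D$ with $\Box(Z_n,Z)\to0$, and fix $f\in F_Z$. Take optimal pairs $(\pi_n,S_n)$ attaining \Cref{eq:box-formula}, which exist by \cite[Theorem~5.14]{gds1}. Then $\pi_n(S_n)\to1$, and there are $g_n\in\overline{F_{Z_n}}$ with $|f(x)-g_n(y)|\le\Box(Z_n,Z)/2$ for $(x,y)\in S_n$. Since $g_n$ is $K$-valued, $f$ lies within $\Box(Z_n,Z)/2$ of $K$ on $\pr_1(S_n)$. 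The $\mu_Z$-measure of $\pr_1(S_n)$ is at least $\pi_n(S_n)\to1$. Hence, for each $\epsilon>0$, the set $\{\operatorname{dist}(f,K)>\epsilon\}$ is open and $\mu_Z$-null. Full support makes it empty, so $f$ is $K$-valued and $Z\in\DK$.

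\emph{Separability.} The plan is to show that $\bigcup_N\DM_K(N)$ is Box dense in $\DK$. Each compact layer is separable, so a countable union of countable dense subsets of the layers is then dense in $\DK$. Fix $Z\in\DK$ and $\epsilon>0$. By \Cref{lem:K-feature-compactness}, $\overline{F_Z}$ is compact, and I will use it in the uniform sense on a large compact set. By tightness, choose a compact $L\subset Z$ with $\mu_Z(L)>1-\epsilon$. The restrictions of $\overline{F_Z}$ to $L$ are uniformly bounded and $1$-Lipschitz, so by Arzel\`a--Ascoli they form a totally bounded family in $\sup_L$. Choose a finite set $G=\{g_1,\dots,g_N\}\subset\overline{F_Z}$ that is an $\epsilon$-net in this sense. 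Consider the feature quotient $Z/G\in\DM_K(N)$; it is $K$-valued by \Cref{lem:feature-realization}, and \Cref{def:feature-quotient} provides a canonical map $q\colon Z\to Z/G$. To compare $Z$ with $Z/G$, use the coupling $(\operatorname{id},q)_*\mu_Z$ and the closed set $S$ equal to the closure of the graph of $q$ over $L$. Then $1-\pi(S)\le\epsilon$. Each $g_i\circ\pr_1$ matches $\widehat g_i\circ\pr_2$ exactly on $S$. Each $f\in\overline{F_Z}$ is within $\epsilon$ of some $g_i$ on $L$, hence on $S$ by continuity. This yields $\Box(Z,Z/G)\le 2\epsilon$.

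The main obstacle is the closure step in the separability argument. The graph of $q$ is continuous on $Z$ because $d_G\le d_Z$. Its closure over the compact set $L$ stays compact, and the identity $g_i(x)=\widehat g_i(q(x))$ persists under closure. Both facts should be checked carefully so that $d_{\infty,S}$ really controls both directed Hausdorff approximations.
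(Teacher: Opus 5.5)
Your proposal is correct. The separability half is the paper's argument: tightness, Arzel\`a--Ascoli on a compact $L$, a finite net $G\subset\overline{F_Z}$, the graph coupling of $Z$ with $Z/G$, and compactness of the layers $\DM_K(N)$. The step you flag as the main obstacle is harmless. The map $q$ is $1$-Lipschitz, and $q(Z)\subset Z/G$ by \Cref{lem:feature-realization}. Hence $\{(x,q(x))\mid x\in L\}$ is the continuous image of a compact set and is already closed. The identity $g_i=\widehat g_i\circ q$ holds on it by construction, so no closure has to be taken.

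For completeness you take a mildly different route. You use completeness of $(\D,\Box)$ from \cite[Theorem~1.4]{gds1}, which the paper records in its preliminaries, and you show that $\DK$ is closed in $\D$ using a Box-optimal correspondence. The paper instead places $\DK$ inside the complete class of \cite[Theorem~4.10]{gds2}, using Ky Fan compactness of $K$-valued features. It then proves closedness through $\dconc\le\Box$ and convergence in measure. Your version is more direct and needs no membership check in that auxiliary class. The underlying closedness idea is the same in both: $f$ stays near $K$ on a set of almost full measure, and continuity plus full support then give $K$-valuedness.

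One tidy-up: $\pr_1(S_n)$ need not be Borel, since it is only analytic. Argue instead with the open set $U_\epsilon\coloneqq\{x\in Z\mid\operatorname{dist}(f(x),K)>\epsilon\}$. Once $\Box(Z_n,Z)<2\epsilon$, it satisfies $(U_\epsilon\times Z_n)\cap S_n=\varnothing$. Therefore $\mu_Z(U_\epsilon)\le1-\pi_n(S_n)\le\Box(Z_n,Z)\to0$.
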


\begin{proof}
By \cite[Theorem~4.10]{gds2}, the compact class obtained from the identity maps is Box complete. Compactness of $K$-valued features in the Ky Fan metric places $\DK$ in this class. In \Cref{subsec:app-screened-closed-density}, we prove that $\DK$ is closed in it. Thus $\DK$ is complete.

The same subsection shows that every $Z\in\DK$ can be approximated arbitrarily well by finite-feature quotients. Therefore, $\bigcup_{N\geq1}\DM_K(N)$ is dense in $\DK$. Each layer is compact and separable by \Cref{lem:bounded-measurement-compact}, so the countable union of dense subsets of these layers gives a countable dense subset of $\DK$. This completes the proof.
\end{proof}

\subsection{Metric comparison and density of compact screens}
\label{subsec:screen-comparison-density}

The Box distance after compact screening controls both the measure of an exceptional set and the uniform feature error outside it. Integrating these two contributions gives a direct comparison with $\dtau$. A finite example shows that no uniform reverse comparison exists, while rescaling the screened features proves density of compact screens.

\begin{definition}[Pullback Box metric]
\label{def:pullback-box}
For $X,Y\in\D$, define the \emph{pullback Box metric} by
\[
\Box_\tau(X,Y)\coloneqq
\Box(\mathsf C_\tau(X),\mathsf C_\tau(Y)).
\]
\end{definition}

\begin{corollary}
\label{cor:tau-box-comparison}
For all $X,Y\in\D$,
\[
\dtau(X,Y)
\leq\frac{3\Box_\tau(X,Y)-\Box_\tau(X,Y)^2}{2}.
\]
In particular, $\Box_\tau$ is a metric on $\D$. On the other hand, there is no function $\omega\colon[0,1]\to[0,+\infty)$ such that $\omega(r)\to0$ as $r\to0$ and
\[
\Box_\tau(X,Y)\leq\omega(\dtau(X,Y))
\]
for all $X,Y\in\DK$.
\end{corollary}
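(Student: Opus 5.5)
The inequality should come from a direct comparison on an optimal Box realization of the compact screens, pulled back to the original spaces. Write $B\coloneqq\Box_\tau(X,Y)$. By \Cref{lem:compact-screen-gds}, $\Box_\tau$ is well defined on $\D$.

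\textbf{Step 1 (an optimal realization and its lift).} By \cite[Theorem~5.14]{gds1}, I take a coupling $\widehat\pi$ of the screen measures and a closed set $S\subset X^\tau\times Y^\tau$ attaining \Cref{eq:box-formula}. Put $\varepsilon\coloneqq1-\widehat\pi(S)$ and $\delta\coloneqq(d_{\infty,S})_H$ of the closed screen feature families. Then $\varepsilon\leq B$ and $\delta\leq B/2$.

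Since $(e_X^\tau)_*\mu_X$ and $(e_Y^\tau)_*\mu_Y$ are the screen measures, I lift $\widehat\pi$ to a coupling $\pi\in\T(\mu_X,\mu_Y)$ with $(e_X^\tau\times e_Y^\tau)_*\pi=\widehat\pi$. I obtain it by gluing the graph couplings $(\mathrm{id},e_X^\tau)_*\mu_X$ and $(\mathrm{id},e_Y^\tau)_*\mu_Y$ to $\widehat\pi$ \cite[Lemma~7.6]{villani2009optimal}.

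\textbf{Step 2 (matching features).} Fix $f\in F_X$. Its screen feature satisfies $\widehat{\tau\circ f}\circ e_X^\tau=\tau\circ f$. By \Cref{lem:box-match-attainment}, there is $g'$ in the pointwise closure of the screen features of $Y$ with $|\widehat{\tau\circ f}(x)-g'(y)|\leq\delta$ on $S$.

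Write $g'$ as a pointwise limit of $\widehat{\tau\circ g_n}$ with $g_n\in F_Y$. Then $v\coloneqq g'\circ e_Y^\tau$ is the pointwise limit of $\tau\circ g_n$, so $v\in\Kc_\tau(Y)$.

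Split the integral into the preimage of $S$ and its complement. On the preimage the integrand is at most $\delta$. On the complement it is at most $1$, the diameter of $K$. Hence
\[
d_{1,\pi}(\tau\circ f\circ\pr_1,v\circ\pr_2)\leq(1-\varepsilon)\delta+\varepsilon\leq\varepsilon+(1-\varepsilon)\frac B2.
\]
The right-hand side is increasing in $\varepsilon\in[0,B]$, so it is at most $B+(1-B)B/2=(3B-B^2)/2$.

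\textbf{Step 3 (passing to all of $\Kc_\tau$ and concluding the inequality).} The bound passes to the pointwise (equivalently $L^1$) closure $\Kc_\tau(X)$ by \Cref{lem:feature-compactness} and dominated convergence. Treating the other direction symmetrically bounds $D_\tau^\pi(X,Y)$, and therefore $\dtau(X,Y)$, by $(3B-B^2)/2$.

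Since $\Box$ is a metric on $\D$, $\Box_\tau$ is a pseudometric. If $\Box_\tau(X,Y)=0$, the inequality gives $\dtau(X,Y)=0$, and \Cref{thm:dtau-metric} gives $X=Y$. So $\Box_\tau$ is a metric.

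I expect the main obstacle to be the lifting and measurability bookkeeping in Steps 1--2, namely checking that the lifted coupling and the matched limit feature are legitimate. The rest is the elementary estimate above.

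\textbf{Step 4 (no reverse modulus).} I would use a finite example with many features. Let $X_n$ be $n$ points with uniform measure, discrete metric scaled by $1/2$, and features $f_i\coloneqq\frac12\mathbf 1_{\{x_i\}}$ for $i=1,\dots,n$; these are $K$-valued, so $X_n\in\DK$. Let $Y$ be the one-point space with feature $c_0$, also in $\DK$.

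For the upper bound on $\dtau$, use the unique coupling. Each $\tau\circ f_i$ has $L^1$ norm $\tau(1/2)/n$, and $\tau\circ c_0=0$. Hence $\dtau(X_n,Y)\leq\tau(1/2)/n\to0$.

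For the lower bound on $\Box_\tau$, note that $\mathsf C_\tau(Y)$ is a point with feature $0$. In $\mathsf C_\tau(X_n)$, every point $x_i$ carries a feature taking the value $\tau(1/2)$ at $x_i$. Consider any nonempty closed $S$ in the (unique-coupling) product. The closed feature family of $Y$ consists only of $0$, so the directed Hausdorff error from $X_n$ to $Y$ on $S$ is at least $\tau(1/2)$. Therefore the cost is at least $\min\{1,2\tau(1/2)\}$, and $S=\varnothing$ costs $1$. Thus $\Box_\tau(X_n,Y)\geq\min\{1,2\tau(1/2)\}>0$, while $\dtau(X_n,Y)\to0$. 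No modulus $\omega$ can exist.
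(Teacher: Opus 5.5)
Your proof is correct and follows the paper's argument. It uses an optimal Box realization of the compact screens, matching via \Cref{lem:box-match-attainment} with error at most $\delta\le B/2$ on $S$ and at most $1$ off $S$, separation from \Cref{thm:dtau-metric}, and the same $n$-point indicator counterexample (height $1/2$ instead of $1/4$, where a lower bound on $\Box_\tau$ suffices). The differences are cosmetic: you lift the screen coupling by gluing instead of identifying measure spaces through the canonical maps, and when $B=1$ the optimal $S$ may be empty, so the lemma does not apply but the bound $1$ is trivial, as in the paper's separate case $h=1$.
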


\begin{proof}
Put $h\coloneqq\Box_\tau(X,Y)$. If $h=1$, the diameter of the range of $\tau$ gives $\dtau(X,Y)\leq1$. Suppose that $h<1$, and take a coupling $\pi$ and a nonempty closed set $S$ attaining the Box distance between the two compact screens. Put
\[
\begin{aligned}
m&\coloneqq1-\pi(S),\\
\delta&\coloneqq(d_{\infty,S})_H
\bigl(\overline{F_{\mathsf C_\tau(X)}}\circ\pr_1,
\overline{F_{\mathsf C_\tau(Y)}}\circ\pr_2\bigr).
\end{aligned}
\]
Then $m\leq h$ and $\delta\leq h/2$. The canonical maps of the compact screens identify full-measure Borel subsets of the source and target as measure spaces, and the two feature closures correspond to $\Kc_\tau(X)$ and $\Kc_\tau(Y)$. By \Cref{lem:box-match-attainment}, every feature can be matched in both directions with a feature at uniform distance at most $\delta$ on $S$. Every matched pair $u,v$ satisfies
\[
d_{1,\pi}(u,v)
\leq\delta(1-m)+m
\leq\frac h2(1-h)+h
=\frac{3h-h^2}{2}.
\]
Taking the two directed Hausdorff distances and then the infimum over couplings proves the first estimate. If $\Box_\tau(X,Y)=0$, then $\dtau(X,Y)=0$, and \Cref{thm:dtau-metric} gives $X=Y$. The other metric axioms follow from the Box distance.

We prove that no uniform reverse estimate exists. Let $n\geq2$, let $E_n\coloneqq\{1,\ldots,n\}$ carry the uniform probability measure $\mu_n$, and define
\[
f_{n,i}(j)\coloneqq
\begin{cases}
1/4,&j=i,\\
0,&j\neq i.
\end{cases}
\]
Put
\[
X_n\coloneqq(E_n,\{f_{n,1},\ldots,f_{n,n}\},\mu_n),\qquad
Y\coloneqq(\{*\},\{0\},\delta_*).
\]
The feature distance between distinct points is $1/4$, and $X_n,Y\in\DK$. The coupling is unique, and
\[
\dtau(X_n,Y)=\frac{\tau(1/4)}{n}\longrightarrow0
\qquad(n\to\infty).
\]
Every nonempty $S\subset E_n\times\{*\}$ contains some $(i,*)$. The screened feature supported at $i$ has value $\tau(1/4)$ there, so its uniform distance from the zero feature on $S$ is $\tau(1/4)$. The full product space attains this value, and $2\tau(1/4)<1$. Therefore,
\[
\Box_\tau(X_n,Y)=2\tau(1/4)
\]
for every $n$. This contradicts every candidate $\omega$. This completes the proof.
\end{proof}

\begin{definition}[Screened observable distance]
\label{def:screened-observable-distance}
For $X,Y\in\D$, define the \emph{screened observable distance} by
\[
\dconc^\tau(X,Y)\coloneqq
\dconc(\mathsf C_\tau(X),\mathsf C_\tau(Y)).
\]
\end{definition}

\begin{lemma}
\label{lem:screened-topology}
For all $X,Y\in\D$,
\[
\dconc^\tau(X,Y)\leq\sqrt{\dtau(X,Y)},\qquad
\dtau(X,Y)\leq2\dconc^\tau(X,Y).
\]
\end{lemma}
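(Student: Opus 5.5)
The plan is to compare the two distances through one fixed coupling. The key step is a translation: couplings of $\mu_X$ and $\mu_Y$ correspond to couplings of the screen measures, and the feature closures correspond to $\Kc_\tau(X)$ and $\Kc_\tau(Y)$. This is the same identification already used in the proof of \Cref{cor:tau-box-comparison}.

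Concretely, $e_X^\tau$ is continuous and pushes $\mu_X$ forward to the screen measure. Every feature of $\mathsf C_\tau(X)$ pulls back along $e_X^\tau$ to $\tau\circ f$ for some $f\in F_X$. Pointwise closures on the screen pull back to $L^1(\mu_X)$-closures on $X$, using \Cref{lem:bounded-lipschitz-compactness} and \Cref{lem:K-feature-compactness}. So $\overline{F_{\mathsf C_\tau(X)}}\circ e_X^\tau=\Kc_\tau(X)$ as subsets of $L^1(\mu_X)$, and this also accounts for the endpoint constants in $E_X$.

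Given $\pi\in\T(\mu_X,\mu_Y)$, the pushforward $(e_X^\tau\times e_Y^\tau)_*\pi$ couples the screen measures. Conversely, every screen coupling arises this way. To see this, glue the screen coupling with the graph couplings of $e_X^\tau$ and $e_Y^\tau$ via the gluing lemma, then project to $X\times Y$. Integrals of functions of the screen features are preserved under this correspondence. Both $\dtau$ and $\dconc^\tau$ are therefore infima, over the same couplings $\pi$ of $\mu_X,\mu_Y$, of Hausdorff distances between the same two sets $\Kc_\tau(X)\circ\pr_1$ and $\Kc_\tau(Y)\circ\pr_2$ in $L^1(\pi)$. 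The only difference is the metric: $d_{1,\pi}$ for $\dtau$ and $\kf^\pi$ for $\dconc^\tau$, the latter by \Cref{eq:dconc-coupling}.

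Both inequalities then follow from elementary comparisons for $K$-valued functions $u,v$, for which $|u-v|\le 1$.
\begin{itemize}
\item The bound $\kf^\pi(u,v)\le\sqrt{d_{1,\pi}(u,v)}$ follows from Markov's inequality: $\pi(|u-v|>\sqrt a)\le a/\sqrt a=\sqrt a$, where $a=d_{1,\pi}(u,v)$.
\item The bound $d_{1,\pi}(u,v)\le 2\kf^\pi(u,v)$ holds because, for $\epsilon>\kf^\pi(u,v)$, the integral is at most $\epsilon\cdot1+1\cdot\epsilon$.
\end{itemize}
These are exactly the estimates stated in \Cref{lem:K-feature-compactness}.

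Both maps $t\mapsto\sqrt t$ and $t\mapsto2t$ are increasing. Hence each bound passes through the directed suprema of infima, so through the Hausdorff distances. For the first inequality, take an optimal $\pi$ for $\dtau$ (\Cref{thm:dtau-metric}) to get $\dconc^\tau\le\sqrt{\dtau}$. For the second, take an optimal coupling for $\dconc$ of the screens and lift it to $X\times Y$ as above to get $\dtau\le2\dconc^\tau$.

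The main obstacle I expect is the rigorous correspondence of couplings and feature closures between $X$ and its screen. Two points need care. First, $e_X^\tau$ need not be injective, since distinct points may be identified by $d_X^\tau$. Second, the screen may contain points outside the image of $e_X^\tau$. I will handle both with the gluing lemma, together with the fact that $e_X^\tau(X)$ has full screen measure (\Cref{lem:compact-screen-gds}). The Hausdorff distances are computed on couplings, so measure-zero discrepancies do not matter.
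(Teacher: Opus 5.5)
Your proposal is correct and follows the paper's argument: you identify couplings and feature closures through the canonical screen maps, with the screen feature closure corresponding to $\Kc_\tau(X)$, and then apply the two estimates of \Cref{lem:K-feature-compactness} to the directed Hausdorff distances on each coupling. Your gluing-lemma lift of screen couplings to $X\times Y$, which uses the graphs of $e_X^\tau$ and $e_Y^\tau$, spells out the measure-space identification that the paper's one-line proof leaves implicit.
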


\begin{proof}
Under the identification of measure spaces induced by the canonical maps of the compact screens, the pointwise closure of the screened feature family corresponds to $\Kc_\tau(X)$. Apply the two estimates in \Cref{lem:K-feature-compactness} to the directed Hausdorff distances on each coupling. This completes the proof.
\end{proof}

\begin{theorem}
\label{thm:screen-density}
The set $\mathsf C_\tau(\D)$ is dense in $(\DK,\Box)$. Moreover, $(\DK,\Box)$ is the metric completion of $(\D,\Box_\tau)$.
\end{theorem}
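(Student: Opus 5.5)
The plan is to show density first and then read off the completion statement from completeness (\Cref{thm:screened-polish}) together with the definition of $\Box_\tau$.

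\textbf{Step 1: reduce to finite-feature objects.} Fix $Z\in\DK$ and $\epsilon>0$. By the approximation result used in the proof of \Cref{thm:screened-polish}, there is a finite-feature quotient $W=Z/G$, with $G\subset\overline{F_Z}$ finite, such that $\Box(Z,W)<\epsilon$. Every feature of $W$ is $K$-valued. So it suffices to approximate an object $W\in\DK$ with finitely many $K$-valued features $g_1,\dots,g_N$.

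\textbf{Step 2: rescale and pull back through $\tau$.} For $0<\theta<1$, set $f_i\coloneqq\tau^{-1}(\theta g_i)$. These are real-valued because $\theta g_i$ takes values in $[-\theta/2,\theta/2]\subset(-1/2,1/2)$. On the interval $[-\theta/2,\theta/2]$ the inverse $\tau^{-1}$ is bi-Lipschitz with constants depending on $\theta$, so the pseudometric induced by $\{f_i\}$ is comparable to $d_W$. Hence $X_\theta\coloneqq(W,\{f_1,\dots,f_N\},\mu_W)$ is a geometric data set: completeness holds because the metrics are equivalent, and the full support of $\mu_W$ is unchanged. Its compact screen uses the metric $\sup_i|\theta g_i(x)-\theta g_i(x')|=\theta d_W$. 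Completion changes nothing, so
\[
\mathsf C_\tau(X_\theta)=(W,\{\theta g_i\},\mu_W).
\]
Now compare this with $W$ using the diagonal coupling and $S=$ the diagonal. The uniform error between matched features $g_i$ and $\theta g_i$ is at most $(1-\theta)/2$, so
\[
\Box(\mathsf C_\tau(X_\theta),W)\leq 1-\theta\to0.
\]
Combining this with Step 1 gives density.

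\textbf{Step 3: the completion statement.} By \Cref{def:pullback-box}, the map $\mathsf C_\tau\colon(\D,\Box_\tau)\to(\DK,\Box)$ is an isometry onto its image. It is injective because $\Box_\tau$ is a metric on $\D$ by \Cref{cor:tau-box-comparison}. Since $(\DK,\Box)$ is complete by \Cref{thm:screened-polish} and the image is dense by Step 2, $(\DK,\Box)$ is the metric completion of $(\D,\Box_\tau)$.

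\textbf{Main obstacle.} The delicate point is Step 1: we need finite-feature quotients to Box-approximate $Z$, which is proved only in the appendix. If that result were unavailable, I would apply the rescaling of Step 2 directly to the full family $\overline{F_Z}$. Here the issue is that a family of functions $\tau^{-1}(\theta g)$ with uniformly bounded Lipschitz constant still induces a complete metric equivalent to $d_Z$, and the same argument goes through. The remaining checks are routine: the compact screen of the rescaled object has exactly the features $\theta g$ (up to closure, which the Box distance ignores), and the Lipschitz constant of $\tau^{-1}$ on $[-\theta/2,\theta/2]$ is finite.
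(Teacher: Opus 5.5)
Your proof is correct and follows the paper's argument. You rescale the $K$-valued features by $\theta<1$, pull them back through $\tau^{-1}$, identify the compact screen as $(W,\{\theta g\},\mu_W)$, bound the Box distance by $1-\theta$ using the diagonal coupling, and conclude from the isometric embedding together with completeness of $(\DK,\Box)$. The one difference is your Step~1, the reduction to finite-feature quotients, which is harmless but not needed: the Lipschitz bound for $\tau^{-1}$ on $[-\theta/2,\theta/2]$ is uniform over all features, so the paper applies the rescaling directly to the full family $F_Z$, exactly as in your fallback.
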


\begin{proof}
Take $Z=(Z,F_Z,\mu_Z)\in\DK$ and $0<r<1$. Put
\[
X_r\coloneqq
\left(Z,\{\tau^{-1}\circ(rf)\mid f\in F_Z\},\mu_Z\right).
\]
The inverse $\tau^{-1}$ is Lipschitz on the compact interval $rK$, so the feature metric of $X_r$ is bounded above by a constant multiple of $d_Z$. The $1/2$-Lipschitz property of $\tau$ gives
\[
2r d_Z\leq d_{X_r}.
\]
The two metrics are bi-Lipschitz equivalent, and $X_r$ is a geometric data set.

The screened feature family of $X_r$ is $rF_Z$, and its induced metric is $rd_Z$. Thus
\[
\mathsf C_\tau(X_r)=(Z,rF_Z,\mu_Z).
\]
Using the diagonal coupling and the full diagonal set in \Cref{eq:box-formula}, we obtain
\[
\Box(\mathsf C_\tau(X_r),Z)\leq1-r.
\]
Letting $r\uparrow1$ proves density. The map $\mathsf C_\tau$ is injective by \Cref{cor:tau-box-comparison} and is an isometric embedding by \Cref{def:pullback-box}. Completeness from \Cref{thm:screened-polish} proves the final assertion. This completes the proof.
\end{proof}

The space $\DK$, which contains the image of the compact-screen construction, is therefore complete, separable, and geodesic for the Box distance. \Cref{sec:pyramid-compactification} records all finite-feature quotients dominated by each $Z\in\DK$ and organizes their limits as pyramids.

\section{Pyramidal Compactification}
\label{sec:pyramid-compactification}

Approximating finite-feature quotients separately does not preserve which observations dominate which others in the limit. We organize the finite observations into pyramids under the feature order. Compactness of every finite-feature layer then controls weak limits and realizes the principal pyramids as a dense subspace of a compact metric space.

\subsection{Pyramids and finite measurements}
\label{subsec:pyramid-measurements}

We transfer the classical pyramid construction for mm-spaces to the screened class with fixed range $K$. A pyramid records not only its finite quotients but also downward closedness and the existence of common upper bounds.

\begin{definition}[$K$-pyramid {\cite[Definitions~6.3--6.4]{shioya2016mmg}}]
\label{def:K-pyramid}
A nonempty Box-closed subset $\mathcal P\subset\DK$ is called a \emph{$K$-pyramid} if it satisfies the following conditions.
\begin{enumerate}
\item If $X\in\mathcal P$, $Y\in\DK$, and $Y\preceq X$, then $Y\in\mathcal P$.
\item If $X,Y\in\mathcal P$, then there is $Z\in\mathcal P$ such that $X\preceq Z$ and $Y\preceq Z$.
\end{enumerate}
\end{definition}

We denote the set of all $K$-pyramids by $\Pi_K$.

\begin{definition}[Principal pyramid]
\label{def:principal-pyramid}
For $Z\in\DK$, the \emph{principal pyramid} generated by $Z$ is defined by
\[
\mathcal P_Z\coloneqq\{Y\in\DK\mid Y\preceq Z\}.
\]
\end{definition}

The domination relation is closed under Box convergence \cite[Theorem~4.16]{gds1}, and hence $\mathcal P_Z$ is Box closed. Reflexivity and transitivity of the feature order show that it is downward closed, and $Z$ is a common upper bound for any two of its members. Thus $\mathcal P_Z\in\Pi_K$.

For a closed subset $\mathcal A\subset\DK$ and $Z\in\DK$, we write
\[
\Box(Z,\mathcal A)\coloneqq\inf_{Y\in\mathcal A}\Box(Z,Y).
\]

\begin{definition}[Weak convergence of pyramids {\cite[Definition~2.11]{gds1}}]
\label{def:weak-convergence}
Let $\mathcal P_n,\mathcal P\subset\DK$ be closed. We say that $\mathcal P_n$ \emph{converges weakly} to $\mathcal P$ as $n\to\infty$ if
\[
\Box(Z,\mathcal P_n)\longrightarrow0\qquad(n\to\infty)
\]
for every $Z\in\mathcal P$, and
\[
\liminf_{n\to\infty}\Box(Z,\mathcal P_n)>0
\]
for every $Z\in\DK\setminus\mathcal P$.
\end{definition}

This is sequential Painlev\'e--Kuratowski convergence, also called weak Hausdorff convergence in mm-space theory.

\begin{definition}[Finite-measurement set]
\label{def:finite-measurements}
For $\mathcal P\in\Pi_K$ and $N\in\mathbb N$, the \emph{finite-measurement set} at level $N$ is
\[
\DM_K(\mathcal P;N)\coloneqq\mathcal P\cap\DM_K(N).
\]
For a general $X\in\D$, put
\[
\DM(X;N)\coloneqq\{Y\in\D\mid Y\preceq X,\ \#F_Y\leq N\}.
\]
\end{definition}

The set $\DM_K(\mathcal P;N)$ is nonempty and Box compact. A one-feature quotient of any member of $\mathcal P$ proves nonemptiness, while \Cref{lem:bounded-measurement-compact} and closedness of $\mathcal P$ prove compactness.

If $X\in\DK$, then every geometric data set dominated by $X$ also belongs to $\DK$. Indeed, the image of a domination map is dense by full support. Pullback sends every feature of the dominated object to a $K$-valued feature of $X$, so continuity and closedness of $K$ show that the original feature is $K$-valued. Therefore,
\[
\DM(X;N)=\DM_K(\mathcal P_X;N).
\]

\subsection{Three stabilizations of weak limits}
\label{subsec:pyramid-weak-limits}

To retain downward closedness and directedness under weak limits, domination must be transferred to approximating sequences and common upper bounds must be replaced by a Box-precompact family. The next three lemmas perform these operations without leaving the fixed range $K$.

\begin{lemma}
\label{lem:K-domination-refinement}
Let $\bar X,Y,\bar Y\in\DK$ and suppose that $Y\preceq\bar Y$. Then there is $X\in\DK$ such that
\[
X\preceq\bar X,\qquad\Box(X,Y)\leq\Box(\bar X,\bar Y).
\]
If $Y$ has at most $N$ features, then $X$ may also be chosen with at most $N$ features.
\end{lemma}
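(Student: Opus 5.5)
Fix an optimal Box realization for $\bar X$ and $\bar Y$: set $h\coloneqq\Box(\bar X,\bar Y)$ and choose, by \cite[Theorem~5.14]{gds1}, a coupling $\pi\in\T(\mu_{\bar X},\mu_{\bar Y})$ and a closed $S\subset\bar X\times\bar Y$ with $1-\pi(S)\leq h$ and
\[(d_{\infty,S})_H(\overline{F_{\bar X}}\circ\pr_1,\overline{F_{\bar Y}}\circ\pr_2)\leq h/2.\]
If $h=1$ there is nothing to prove: any one-feature quotient $X$ of $\bar X$ (or, to keep the feature count at most $N$, any quotient by at most $N$ features) satisfies $\Box(X,Y)\leq1$, since the empty set has cost $1$. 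So assume $h<1$, whence $S\neq\varnothing$.

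Let $p\colon\bar Y\to Y$ be a domination map, so $p_*\mu_{\bar Y}=\mu_Y$ and $F_Y\circ p\subset\overline{F_{\bar Y}}$. For each $g\in F_Y$, the function $g\circ p$ lies in $\overline{F_{\bar Y}}$. By \Cref{lem:box-match-attainment}, choose $f_g\in\overline{F_{\bar X}}$ with
\[d_{\infty,S}(f_g\circ\pr_1,g\circ p\circ\pr_2)\leq h/2.\]
Let $G$ be the pointwise closure of $\{f_g\mid g\in F_Y\}$. By \Cref{def:feature-quotient}, the candidate is $X\coloneqq\bar X/G$, so $X\preceq\bar X$. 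The features of $X$ are $K$-valued because $G\subset\overline{F_{\bar X}}$ is, and \Cref{lem:feature-realization} keeps values in $K$. If $\#F_Y\leq N$, then $G$ has at most $N$ members, so $X$ has at most $N$ features.

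To bound $\Box(X,Y)$, push $\pi$ forward by $q\times p$, where $q\colon\bar X\to X$ is the canonical quotient map. This gives a coupling $\pi'\in\T(\mu_X,\mu_Y)$. Take $S'$ to be the closure of $(q\times p)(S\cap\supp\pi)$, so that $\pi'(S')\geq\pi(S)\geq1-h$. On the image of $S$, the feature $\widehat{f_g}$ of $X$ and the feature $g$ of $Y$ differ by at most $h/2$, since $\widehat{f_g}\circ q=f_g$ and $g\circ p$ is evaluated at the corresponding point. By continuity of both features this bound passes to the closure $S'$. The map $g\mapsto\widehat{f_g}$ matches every feature of $Y$, and every feature of $X$ is of this form. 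Pointwise limits of $f_g$ are matched by limits of the corresponding $g$ through compactness of $\overline{F_Y}$, and the uniform bound survives pointwise limits. Hence both directed Hausdorff distances are at most $h/2$, and $\Box(X,Y)\leq\max\{h,h\}=h$.

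The main obstacle is this last step. Continuity of $p$ is only guaranteed after restricting to suitable sets: domination maps are $1$-Lipschitz by \cite[Proposition~3.10]{gds1}, but one must still check that $q$ and $p$ are continuous, so that the closure of the image of $S$ carries the bound, and that $(q\times p)(S)$ is measurable. I would handle this by using the $1$-Lipschitz property of both $q$ and $p$, and by taking the closure in $X\times Y$ directly, so that measurability is not needed beyond $\pi'(S')\geq\pi((q\times p)^{-1}(S'))\geq\pi(S)$.
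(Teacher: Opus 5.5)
Your proposal is correct and follows the paper's route. The paper also matches each $f\circ p$ ($f\in F_Y$) with some $h_f\in\overline{F_{\bar X}}$ on an optimal Box correspondence, takes $X=\bar X/G$, and reads off $X\in\DK$ and $\#G\le\#F_Y$ as you do, but it cites \cite[Lemma~5.4]{gds2} for $\Box(X,Y)\le\Box(\bar X,\bar Y)$; your pushforward of the optimal realization by $q\times p$ proves that estimate directly. Two small notes: the continuity worry is moot, since $F_Y\circ p\subset\overline{F_{\bar Y}}$ already makes $p$ $1$-Lipschitz everywhere, and matching limits of the $f_g$ with elements of $\overline{F_Y}$ relies on the closure invariance of $\Box$ from \cite[Theorem~5.14]{gds1}.
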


The refinement with the Box estimate is \cite[Lemma~5.4]{gds2}. We verify in \Cref{subsec:app-K-refinement} that its quotient construction preserves the range $K$ and the number of features.

\begin{lemma}
\label{lem:finite-net-precompactness}
Let $\mathcal E\subset\DK$. Suppose that, for every $\epsilon>0$, there is $N\in\mathbb N$ such that, for each $Z\in\mathcal E$, one can choose a closed set $L_Z\subset Z$ and a finite set $G_Z\subset\overline{F_Z}$ satisfying
\[
\mu_Z(L_Z)>1-\epsilon,\qquad\#G_Z\leq N,\qquad(d_{\infty,L_Z})_H(\overline{F_Z},G_Z)<\epsilon.
\]
Then $\mathcal E$ is Box precompact.
\end{lemma}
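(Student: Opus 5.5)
The plan is to prove total boundedness directly: for each $\epsilon>0$ we exhibit a finite $C\epsilon$-net of $\mathcal E$ for the Box distance. Because $(\DK,\Box)$ is complete by \Cref{thm:screened-polish}, precompactness in $\DK$ (or in $\D$) follows.

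Fix $\epsilon>0$ and take $N$ as in the hypothesis. For $Z\in\mathcal E$ we form the finite-feature quotient $Z_N\coloneqq Z/G_Z$ of \Cref{def:feature-quotient}, after enlarging $G_Z$ by one feature if it is empty. Then $Z_N\preceq Z$, $Z_N$ has at most $N$ features, and these features are $K$-valued by \Cref{lem:feature-realization}. Hence $Z_N\in\DM_K(N)$. The key estimate is
\[
\Box(Z,Z_N)\leq\max\{\epsilon,2\epsilon\}=2\epsilon .
\]
To obtain it, use the coupling $(\operatorname{id},q)_*\mu_Z$, where $q\colon Z\to Z_N$ is the canonical map. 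Since $d_{G_Z}\leq d_Z$, the map $q$ is continuous, so the graph of $q|_{L_Z}$ is closed, and we take $S$ to be this graph. Its measure is $\mu_Z(L_Z)>1-\epsilon$. On $S$, every feature $f\in\overline{F_Z}$ is matched with some $g\in G_Z$ such that $\sup_{L_Z}|f-g|<\epsilon$. Since $g=\widehat g\circ q$, the quantity $d_{\infty,S}(f\circ\pr_1,\widehat g\circ\pr_2)$ is less than $\epsilon$. Conversely, each feature $\widehat g$ of $Z_N$ is matched with $g\in\overline{F_Z}$ at distance $0$. We use here that replacing families by their closures does not change $\Box$, as noted after \Cref{def:box}. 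If needed, we also pass to the closure of the feature family of $Z_N$ in order to include pointwise limits. This closure step is handled by the common Lipschitz bound, which lets the uniform estimate on $S$ pass to limits.

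Next we apply \Cref{lem:bounded-measurement-compact}. The layer $\DM_K(N)$ is Box compact, so it contains a finite $\epsilon$-net $\{W_1,\dots,W_m\}$. Each $Z\in\mathcal E$ then satisfies $\Box(Z,W_i)<3\epsilon$ for some $i$. Since $\epsilon$ is arbitrary, $\mathcal E$ is totally bounded, and completeness gives precompactness.

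The main obstacle is the Box estimate. Two points need care there: verifying the closedness of $S$ and the Borel measurability of $q$, and checking that the features of the quotient, which are extended to the completion and restricted to the support, still match on $S$. The latter holds because $q(L_Z)$ lies in the support once $\mu_Z$ has full support, which is the last clause of \Cref{lem:feature-realization}.
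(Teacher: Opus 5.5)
Your proof is correct and is essentially the paper's argument. The paper cites the finite-net criterion \cite[Lemma~4.12]{gds2} and remarks only that each quotient $Z/G_Z$ lies in $\DK$ and hence in the compact layer $\DM_K(N)$; your explicit bound $\Box(Z,Z/G_Z)\leq2\epsilon$ comes from the same graph coupling over $L_Z$ that the paper uses in \Cref{subsec:app-screened-closed-density}, and combining it with a finite net of $\DM_K(N)$ and completeness of $(\DK,\Box)$ is exactly the mechanism behind that citation.
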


This is the finite-net criterion in which each feature is measured by itself \cite[Lemma~4.12]{gds2}. The correspondence between its assumptions and the three conditions above is recorded in \Cref{subsec:app-K-refinement}.

\begin{lemma}
\label{lem:K-common-upper-bound}
Suppose that $X_n,Y_n,\bar Z_n,X,Y\in\DK$ satisfy
\[
X_n\preceq\bar Z_n,\qquad Y_n\preceq\bar Z_n,\qquad\Box(X_n,X)+\Box(Y_n,Y)\longrightarrow0\qquad(n\to\infty).
\]
Then one can choose $Z_n\in\DK$ such that
\[
X_n,Y_n\preceq Z_n\preceq\bar Z_n
\]
and $(Z_n)$ has a Box-convergent subsequence.
\end{lemma}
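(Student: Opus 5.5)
We construct $Z_n$ as a feature quotient of $\bar Z_n$ that keeps only the features needed to dominate $X_n$ and $Y_n$. Fix domination maps $p_n\colon\bar Z_n\to X_n$ and $q_n\colon\bar Z_n\to Y_n$. Put
\[G_n\coloneqq\overline{\bigl(F_{X_n}\circ p_n\bigr)\cup\bigl(F_{Y_n}\circ q_n\bigr)}^{\,\mathrm{pt}}\subset\overline{F_{\bar Z_n}},\qquad Z_n\coloneqq\bar Z_n/G_n.\]
By \Cref{def:feature-quotient}, $Z_n\preceq\bar Z_n$. The features of $Z_n$ are $K$-valued, so $Z_n\in\DK$ by \Cref{lem:feature-realization}.

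To get $X_n\preceq Z_n$, we factor $p_n$ through the canonical map $\bar Z_n\to Z_n$. Every $f\in F_{X_n}$ satisfies $f\circ p_n\in G_n$, so
\[d_{X_n}(p_n(z),p_n(z'))\le d_{G_n}(z,z').\]
Hence $p_n$ descends to a $1$-Lipschitz map on the metric quotient. It extends to the completion because $X_n$ is complete, and we restrict it to the support. The pushforward of the measure is preserved, and features of $X_n$ pull back into $\overline{F_{Z_n}}$ by construction. The same argument with $q_n$ gives $Y_n\preceq Z_n$. This step is a routine application of the universal property of quotients cited in \Cref{def:feature-quotient}.

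The substance of the lemma is Box precompactness of $\{Z_n\}$, which we verify with \Cref{lem:finite-net-precompactness}. Fix $\epsilon>0$. The sets $\{X_n\}\cup\{X\}$ and $\{Y_n\}\cup\{Y\}$ are Box convergent, hence Box compact. We first extract finite nets uniformly for these two families. For $X$, \Cref{lem:K-feature-compactness} gives a finite Ky Fan $\epsilon$-net of $\overline{F_X}$, and tightness gives a compact set. Using an optimal Box realization $(\pi_n,S_n)$ for $\Box(X_n,X)$ and \Cref{lem:box-match-attainment}, we transport these to $X_n$: a set $L'_n\subset X_n$ of measure $>1-\epsilon$ and at most $N_1$ features forming a uniform $\epsilon$-net of $\overline{F_{X_n}}$ on $L'_n$. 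The transport loses only $2\Box(X_n,X)$ in the uniform error and $\Box(X_n,X)$ in mass, and finitely many small $n$ are handled directly. We do the same for $Y_n$, obtaining $L''_n$ and at most $N_2$ features.

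We then pull back through the quotient maps. Set $L_{Z_n}$ to be the closure of the image in $Z_n$ of $p_n^{-1}(L'_n)\cap q_n^{-1}(L''_n)$. Its measure exceeds $1-2\epsilon$ because $p_n,q_n$ push the measure forward correctly. The net $G_{Z_n}$ consists of the pulled-back net features, so $\#G_{Z_n}\le N_1+N_2$. Every feature in $\overline{F_{Z_n}}$ is a pointwise limit of pulled-back features of $X_n$ or $Y_n$, and uniform closeness on $L'_n$ or $L''_n$ transfers to $L_{Z_n}$ up to $\epsilon$. The Hausdorff condition holds in both directions because the net is a subset of the family.

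The main obstacle is measurability and closedness. Preimages under Borel domination maps need not be closed, so we would shrink them to compact subsets by inner regularity, accepting another $\epsilon$ of mass. Once this is done, \Cref{lem:finite-net-precompactness} gives precompactness. Completeness of $(\DK,\Box)$ from \Cref{thm:screened-polish} then yields a Box-convergent subsequence.
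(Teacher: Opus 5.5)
Your proof is correct, but it takes a different route from the paper. The paper does not build $Z_n$ itself. It invokes \cite[Lemma~5.6]{gds2} with only the identity transformation to get a precompact sequence with $X_n,Y_n\preceq Z_n\preceq\bar Z_n$. Its only verification of its own (\Cref{subsec:app-K-refinement}) is that this $Z_n$ lies in $\DK$: it pulls $\overline{F_{Z_n}}$ back into the $K$-valued family $\overline{F_{\bar Z_n}}$ and uses density of the image of the domination map. You make the construction explicit. Since $d_{G_n}(z,z')=\max\{d_{X_n}(p_nz,p_nz'),d_{Y_n}(q_nz,q_nz')\}$, your $Z_n$ is the support of $(p_n,q_n)_*\mu_{\bar Z_n}$ in $X_n\times Y_n$ with the $\ell_\infty$ metric. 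Hence $K$-valuedness and the three dominations (via $\pr_1$, $\pr_2$, $(p_n,q_n)$) are immediate. You then obtain precompactness from the in-paper criterion \Cref{lem:finite-net-precompactness} by transporting uniform nets from $X$ and $Y$ along optimal Box realizations. This makes the lemma self-contained within the tools stated in the paper. The citation is shorter, but it hides exactly this finite-net bookkeeping.

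Two details should be tightened.

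First, a Ky Fan net of $\overline{F_X}$ together with a compact set of large measure does not give a uniform net on that set, because the exceptional sets depend on the feature. Instead, take a compact $L\subset X$ with $\mu_X(L)>1-\epsilon$ and a finite uniform $\epsilon$-net of $\overline{F_X}|_L$ from Arzel\`a--Ascoli, as in \Cref{subsec:app-screened-closed-density}. Then set $L'_n\coloneqq\pr_2(S_n\cap(L\times X_n))$. This set is closed because $L$ is compact, it satisfies $\mu_{X_n}(L'_n)>1-\epsilon-\Box(X_n,X)$, and the matched net has uniform error at most $\epsilon+\Box(X_n,X)$ on it.

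Second, domination maps are $1$-Lipschitz \cite[Proposition~3.10]{gds1}, so $p_n^{-1}(L'_n)\cap q_n^{-1}(L''_n)$ is already closed. In the product picture you can simply take $L_{Z_n}\coloneqq Z_n\cap(L'_n\times L''_n)$. The inner-regularity step is therefore unnecessary.
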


The precompact refinement of common upper bounds is \cite[Lemma~5.6]{gds2}. We check in \Cref{subsec:app-K-refinement} that the quotient constructed there remains in $\DK$.

\begin{theorem}
\label{thm:K-pyramid-weak-limit}
Every weak limit of a sequence of $K$-pyramids is a $K$-pyramid.
\end{theorem}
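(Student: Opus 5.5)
Let $\mathcal P_n\in\Pi_K$ converge weakly to a closed set $\mathcal P\subset\DK$. We must show that $\mathcal P$ is nonempty, downward closed within $\DK$, and directed. Box-closedness of $\mathcal P$ is part of the hypothesis of \Cref{def:weak-convergence}. In fact a Painlev\'e--Kuratowski limit is automatically closed: if $\Box(Z,\mathcal P_n)\to0$ fails for $Z\notin\mathcal P$, the liminf condition is consistent with that.

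\textbf{Nonemptiness.} Take the one-point space $O$ with the single feature $0$. It is dominated by every member of $\DK$, since the constant map pushes any measure to $\delta_*$ and the zero feature pulls back to a constant. Constants lie in $\overline{F}$ only when present, so this needs care. I would instead use a diagonal argument. Each $\DM_K(\mathcal P_n;1)$ is nonempty and lies in the compact set $\DM_K(1)$ by \Cref{lem:bounded-measurement-compact}. Choose $Y_n$ in it and pass to a Box-convergent subsequence $Y_{n_k}\to Y$. Then $\liminf_n\Box(Y,\mathcal P_n)=0$, so by the second condition of \Cref{def:weak-convergence}, $Y\in\mathcal P$.

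\textbf{Downward closedness.} Let $Y\in\mathcal P$ and $X'\in\DK$ with $X'\preceq Y$. Choose $\bar Y_n\in\mathcal P_n$ with $\Box(\bar Y_n,Y)\to0$. Apply \Cref{lem:K-domination-refinement} with $\bar X=\bar Y_n$, the pair $X'\preceq Y$, and the roles arranged so that we obtain $X_n\preceq\bar Y_n$ with $\Box(X_n,X')\leq\Box(\bar Y_n,Y)$. Concretely, the lemma is applied with $(\bar X,Y,\bar Y)$ replaced by $(\bar Y_n,X',Y)$. Then $X_n\in\mathcal P_n$ by downward closedness of $\mathcal P_n$, and $\Box(X',\mathcal P_n)\to0$. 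Hence $X'\in\mathcal P$ by the liminf condition.

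\textbf{Directedness.} This is the main step. Given $X,Y\in\mathcal P$, pick $X_n,Y_n\in\mathcal P_n$ converging in Box to $X$ and $Y$. Directedness of $\mathcal P_n$ gives $\bar Z_n\in\mathcal P_n$ dominating both. The obstacle is that $(\bar Z_n)$ need not have any convergent subsequence. \Cref{lem:K-common-upper-bound} replaces it by $Z_n$ with $X_n,Y_n\preceq Z_n\preceq\bar Z_n$ and a subsequence $Z_{n_k}\to Z$. Downward closedness of $\mathcal P_n$ gives $Z_n\in\mathcal P_n$, so $\liminf_n\Box(Z,\mathcal P_n)=0$ and hence $Z\in\mathcal P$.

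Finally, $X\preceq Z$ and $Y\preceq Z$ follow because the domination relation is closed under Box convergence \cite[Theorem~4.16]{gds1}. We apply this to $X_{n_k}\preceq Z_{n_k}$ with $X_{n_k}\to X$ and $Z_{n_k}\to Z$, and likewise for $Y$. The closedness of domination together with the precompact refinement of \Cref{lem:K-common-upper-bound} is where the real work lies; the rest is bookkeeping with \Cref{def:weak-convergence}.
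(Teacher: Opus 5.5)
Your proposal is correct and follows the paper's proof essentially step for step. Nonemptiness comes from a Box-convergent subsequence of one-feature members in the compact layer $\DM_K(1)$. Downward closedness comes from \Cref{lem:K-domination-refinement} applied to $(\bar Y_n,X',Y)$, and directedness comes from \Cref{lem:K-common-upper-bound} together with closedness of domination.

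The one flaw is your aside that a Painlev\'e--Kuratowski limit is automatically closed: the claim is true, but your stated justification does not actually argue it. The right reason is that $Z\mapsto\Box(Z,\mathcal P_n)$ is $1$-Lipschitz. This is harmless, because closedness of $\mathcal P$ is part of the hypothesis.
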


\begin{proof}
Suppose that $\mathcal P_n\in\Pi_K$ converges weakly to a closed subset $\mathcal P\subset\DK$ as $n\to\infty$. Take $Y\in\mathcal P$ and $X\preceq Y$. Choose $Y_n\in\mathcal P_n$ such that $Y_n\to Y$ in the Box distance as $n\to\infty$. Applying \Cref{lem:K-domination-refinement} with $\bar X=Y_n$, $Y=X$, and $\bar Y=Y$ gives $X_n\in\DK$ such that $X_n\preceq Y_n$ and
\[
\Box(X_n,X)\leq\Box(Y_n,Y).
\]
Downward closedness of each $\mathcal P_n$ gives $X_n\in\mathcal P_n$, and weak convergence gives $X\in\mathcal P$. Thus $\mathcal P$ is downward closed.

Take $X,Y\in\mathcal P$, and choose $X_n,Y_n\in\mathcal P_n$ converging in the Box distance to $X,Y$, respectively, as $n\to\infty$. Directedness of $\mathcal P_n$ gives $\bar Z_n\in\mathcal P_n$ such that $X_n,Y_n\preceq\bar Z_n$. By \Cref{lem:K-common-upper-bound}, there are $Z_n\in\DK$ such that $X_n,Y_n\preceq Z_n\preceq\bar Z_n$ and, after passage to a subsequence, $Z_n\to Z$ in the Box distance as $n\to\infty$. Downward closedness gives $Z_n\in\mathcal P_n$, and weak convergence gives $Z\in\mathcal P$. Closedness of domination \cite[Theorem~4.16]{gds1} gives $X,Y\preceq Z$. Therefore, $\mathcal P$ is directed.

For each $n$, choose $W_n\in\mathcal P_n$ and $f_n\in F_{W_n}$. The one-feature quotient
\[
V_n\coloneqq W_n/\{f_n\}
\]
belongs to $\mathcal P_n\cap\DM_K(1)$. By \Cref{lem:bounded-measurement-compact}, after passage to a subsequence, $V_n\to V$ in the Box distance for some $V\in\DM_K(1)$ as $n\to\infty$. Weak convergence gives $V\in\mathcal P$, and hence $\mathcal P$ is nonempty. Closedness is part of the assumption on the limit set, so $\mathcal P\in\Pi_K$. This completes the proof.
\end{proof}

\subsection{Detection and reconstruction by finite measurements}
\label{subsec:pyramid-reconstruction}

Weak convergence can be detected on each finite-feature layer. For principal pyramids, convergence of every finite-measurement layer also reconstructs convergence of the generators in the observable distance.

\begin{lemma}
\label{lem:K-pyramid-measurements}
For $K$-pyramids $\mathcal P_n,\mathcal P$, the following conditions are equivalent.
\begin{enumerate}
\item The sequence $\mathcal P_n$ converges weakly to $\mathcal P$ as $n\to\infty$.
\item For every $N\in\mathbb N$,
\[
(\Box)_H(\DM_K(\mathcal P_n;N),\DM_K(\mathcal P;N))\longrightarrow0\qquad(n\to\infty).
\]
\end{enumerate}
\end{lemma}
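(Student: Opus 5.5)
We prove both directions, using the compactness of each layer $\DM_K(N)$ from \Cref{lem:bounded-measurement-compact} together with the finite-feature approximation established in \Cref{subsec:app-screened-closed-density}.

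\emph{(1)$\Rightarrow$(2).} Fix $N$. The sets $\DM_K(\mathcal P_n;N)$ and $\DM_K(\mathcal P;N)$ are nonempty compact subsets of the compact metric space $(\DM_K(N),\Box)$. For compact subsets of a compact metric space, Hausdorff convergence is equivalent to Painlev\'e--Kuratowski convergence. So it suffices to check two things.
\begin{enumerate}
\item Every $Y\in\DM_K(\mathcal P;N)$ is a Box limit of some sequence $Y_n\in\DM_K(\mathcal P_n;N)$.
\item Every subsequential limit of a sequence $Y_n\in\DM_K(\mathcal P_n;N)$ lies in $\DM_K(\mathcal P;N)$.
\end{enumerate}
The second point follows from weak convergence and the Box compactness of $\DM_K(N)$: the limit has at most $N$ features, and it lies in $\mathcal P$ because $\liminf_n\Box(Z,\mathcal P_n)=0$ along the subsequence. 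To be careful here, one should argue by contradiction with a subsequence. The definition uses $\liminf>0$ for points outside $\mathcal P$, so a point outside $\mathcal P$ cannot be approximated even along a subsequence.

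For the first point, weak convergence gives $Y_n'\in\mathcal P_n$ with $\Box(Y_n',Y)\to0$, but $Y_n'$ may have many features. Apply \Cref{lem:K-domination-refinement} with $\bar X=Y_n'$ and $\bar Y=Y$, taking the dominated object to be $Y$ itself (using $Y\preceq Y$). This produces $X_n\preceq Y_n'$ with at most $N$ features and $\Box(X_n,Y)\leq\Box(Y_n',Y)\to0$. Downward closedness puts $X_n$ in $\DM_K(\mathcal P_n;N)$.

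\emph{(2)$\Rightarrow$(1).} Let $Z\in\mathcal P$ and $\epsilon>0$. By the finite-feature quotient approximation used in \Cref{thm:screened-polish}, there is a finite $G\subset\overline{F_Z}$ with $\Box(Z/G,Z)<\epsilon$. We have $Z/G\in\DM_K(\mathcal P;N)$ with $N=\#G$. Hypothesis (2) then gives $Y_n\in\mathcal P_n$ with $\Box(Y_n,Z/G)<\epsilon$ for large $n$. Hence $\limsup_n\Box(Z,\mathcal P_n)\leq2\epsilon$.

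Conversely, let $Z\notin\mathcal P$, and suppose $\liminf_n\Box(Z,\mathcal P_n)=0$. Then along a subsequence there are $W_n\in\mathcal P_n$ with $W_n\to Z$. Since $\mathcal P$ is closed, some quotient $Z/G$ with $\#G=N$ lies outside $\mathcal P$: otherwise $Z$ would be a Box limit of members of $\mathcal P$. Indeed $\Box(Z/G,\mathcal P)=:\eta>0$ for suitable $G$. Applying \Cref{lem:K-domination-refinement} with $\bar X=W_n$, $\bar Y=Z$ and $Y=Z/G$ gives $X_n\preceq W_n$ with at most $N$ features and $\Box(X_n,Z/G)\to0$. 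These $X_n$ belong to $\DM_K(\mathcal P_n;N)$. Hypothesis (2) then forces $\Box(Z/G,\DM_K(\mathcal P;N))\to0$ along the subsequence, contradicting $\eta>0$.

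The main obstacle is the approximation step: that every $Z\in\DK$ is a Box limit of quotients $Z/G$ with finite $G$. This is asserted in the proof of \Cref{thm:screened-polish}, and I would invoke it directly. Otherwise the argument is the standard compact-layer Hausdorff/Kuratowski equivalence.
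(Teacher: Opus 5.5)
Your proposal is correct and follows essentially the same route as the paper. In both directions you use \Cref{lem:K-domination-refinement} to push approximations into a fixed finite layer, compactness of $\DM_K(N)$ for the reverse inclusion, and finite-feature quotients $Z/G$ together with closedness of $\mathcal P$ for the second weak-convergence condition. The differences are only in packaging: you invoke the Hausdorff/Painlev\'e--Kuratowski equivalence in the compact layer instead of the paper's explicit finite-cover and contradiction argument, and you argue by contradiction with a single quotient $Z/G$ at positive distance from $\mathcal P$ rather than with a sequence $B_m\to Y$ of quotients shown to lie in $\mathcal P$.
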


\begin{proof}
Assume weak convergence. For $A\in\DM_K(\mathcal P;N)$, choose $Y_n\in\mathcal P_n$ such that $Y_n\to A$ in the Box distance as $n\to\infty$. Apply \Cref{lem:K-domination-refinement} with $\bar X=Y_n$ and $Y=\bar Y=A$. This gives $A_n\in\DM_K(\mathcal P_n;N)$ such that $\Box(A_n,A)\leq\Box(Y_n,A)$. The reverse Hausdorff approximation follows by taking a subsequential limit of a counterexample sequence in the compact set $\DM_K(N)$ and using the second condition of weak convergence. The compactness argument that makes both approximations uniform is given in \Cref{subsec:app-pyramid-measurements}.

Conversely, suppose that the finite-measurement sets converge. Approximate $X\in\mathcal P$ by a finite-feature quotient $A\preceq X$ using \Cref{subsec:app-screened-closed-density}, and then approximate $A$ by $A_n\in\mathcal P_n$ in the same finite layer. This gives $\Box(X,\mathcal P_n)\to0$ as $n\to\infty$. Now suppose that $Y_n\in\mathcal P_n$ converges to $Y\in\DK$ in the Box distance as $n\to\infty$. Choose finite quotients $B_m\preceq Y$ converging to $Y$ in the Box distance as $m\to\infty$, and use \Cref{lem:K-domination-refinement} to transfer approximations of $B_m$ below $Y_n$. For each fixed $m$, convergence of the finite-measurement sets gives $B_m\in\mathcal P$. Letting $m\to\infty$ and using closedness of $\mathcal P$ gives $Y\in\mathcal P$. This is the second condition of weak convergence. This completes the proof.
\end{proof}

\begin{lemma}
\label{lem:finite-measurement-proximity}
Let $X,Y\in\DK$ and $N\in\mathbb N$. For every $A\in\DM(X;N)$, there is $B\in\DM(Y;N)$ such that
\[
\Box(A,B)\leq2N\dconc(A,B)\leq2N\dconc(X,Y).
\]
In particular,
\[
(\Box)_H(\DM(X;N),\DM(Y;N))\leq2N\dconc(X,Y).
\]
\end{lemma}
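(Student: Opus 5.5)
The plan is to first produce $B$ with $\dconc(A,B)\le\dconc(X,Y)$, and then show that for objects with at most $N$ features a Ky Fan Hausdorff matching upgrades to a Box estimate at the cost of a factor $2N$. The second inequality and the Hausdorff statement then follow at once, and the Hausdorff bound is symmetric in $X$ and $Y$.

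\emph{Construction of $B$.} Fix $A\in\DM(X;N)$ with domination map $p\colon X\to A$ and features $a_1,\dots,a_k$, $k\le N$. Put $d\coloneqq\dconc(X,Y)$ and take a coupling $\pi\in\T(\mu_X,\mu_Y)$ attaining the minimum in \Cref{eq:dconc-coupling}.
\begin{enumerate}
\item Each $a_i\circ p$ lies in $\overline{F_X}$. Replacing $F_X$ by its pointwise closure does not change the Ky Fan Hausdorff distance on $\pi$: pointwise convergence gives convergence in $\pi$-measure, as in the proof of \Cref{thm:dtau-metric}.
\item Hence there is $g_i\in\overline{F_Y}$ with $\kf^\pi(a_i\circ p\circ\pr_1,g_i\circ\pr_2)\le d+\eta$ for any $\eta>0$. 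I expect a compactness argument to remove $\eta$: $\overline{F_Y}$ is Ky Fan compact by \Cref{lem:K-feature-compactness}. If not, carry $\eta$ through and let $\eta\downarrow0$ at the end.
\item Let $G\coloneqq\{g_1,\dots,g_k\}$, which is finite and hence pointwise closed, and set $B\coloneqq Y/G$. Then $B\preceq Y$ and $B$ has at most $N$ features. It lies in $\DK$ because objects dominated by elements of $\DK$ do, as noted after \Cref{def:finite-measurements}. Thus $B\in\DM(Y;N)$.
\item Let $q\colon Y\to B$ be the canonical map and $\pi'\coloneqq(p\times q)_*\pi\in\T(\mu_A,\mu_B)$. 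The features $\widehat g_i\circ q=g_i$ pull back to the $g_i$. So every feature of $A$ is within Ky Fan distance $d$ of a feature of $B$, and conversely, since $\widehat g_i$ is matched with $a_i$. Therefore $\dconc(A,B)\le d$.
\end{enumerate}

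\emph{The Box estimate for at most $N$ features.} Put $e\coloneqq\dconc(A,B)$ and take an optimal coupling $\rho$ of $\mu_A,\mu_B$ from \Cref{eq:dconc-coupling}. For each of the at most $N$ features of $A$, pick a partner among the features of $B$ at Ky Fan distance at most $e$, and do the same from the $B$ side. This gives at most $2N$ pairs $(u_j,v_j)$. Since $\epsilon\mapsto\xi(\{|u-v|>\epsilon\})$ is right continuous, the infimum in \Cref{def:ky-fan} is attained, so $\rho(\{|u_j\circ\pr_1-v_j\circ\pr_2|>e\})\le e$. Let
\[S\coloneqq\bigcap_j\{|u_j\circ\pr_1-v_j\circ\pr_2|\le e\}.\]
This set is closed by continuity of the features and satisfies $\rho(S)\ge1-2Ne$. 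On $S$ each feature is within uniform distance $e$ of its partner, so $(d_{\infty,S})_H\le e$. Therefore
\[\Box(A,B)\le\max\{2Ne,2e\}=2Ne\qquad(N\ge1).\]

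\emph{Main obstacle.} The delicate point is the construction of $B$. The pulled-back features $a_i\circ p$ are only in the pointwise closure $\overline{F_X}$. We must check that approximating them by elements of $\overline{F_Y}$ in $\pi$-measure is legitimate, and that the quotient $Y/G$ carries exactly the features $\widehat g_i$, with the pushforward coupling realizing the matching.
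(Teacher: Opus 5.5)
Your proposal is correct and follows the same route as the paper: $B$ is the quotient $Y/G$ by features of $Y$ matched, on an optimal coupling, to the pulled-back features of $A$, and the Ky Fan matching of at most $2N$ feature pairs is then turned into a Box bound by a union bound. These two steps are the quotient-selection lemma and the finite-feature Box estimate that the paper cites, and your use of the Ky Fan compactness of $\overline{F_Y}$ from \Cref{lem:K-feature-compactness} correctly removes $\eta$ under the hypothesis $Y\in\DK$.
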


The first quotient selection is \cite[Lemma~6.10]{gds2}, and the Box estimate between finite-feature objects is \cite[Proposition~6.8]{gds2}. We verify the fixed-range specialization in \Cref{subsec:app-pyramid-reconstruction}.

\begin{lemma}
\label{lem:finite-measurement-reconstruction}
Let $X,X_n\in\DK$. If
\[
(\Box)_H(\DM(X_n;N),\DM(X;N))\longrightarrow0\qquad(n\to\infty)
\]
for every $N\in\mathbb N$, then $\dconc(X_n,X)\to0$ as $n\to\infty$.
\end{lemma}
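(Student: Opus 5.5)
We argue by contradiction, combined with the finite-measurement description of feature families. Suppose $\dconc(X_n,X)\not\to0$. After passing to a subsequence, $\dconc(X_n,X)\geq\epsilon$ for some $\epsilon>0$. The aim is to show that the finite layers force both directed Ky Fan approximations in \Cref{eq:dconc-coupling} to be small on a suitable coupling.

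The first step approximates $X$ by a finite quotient. By \Cref{lem:K-feature-compactness}, $\overline{F_X}$ is compact in the Ky Fan metric, so it has a finite $\epsilon/4$-net $G=\{g_1,\dots,g_N\}$. The quotient $A\coloneqq X/G$ lies in $\DM(X;N)$. The quotient map pulls the features of $A$ back exactly to $G$, so
\[
\dconc(X,A)\leq(\kf^{\mu_X})_H(\overline{F_X},G)\leq\epsilon/4
\]
on the diagonal coupling. This takes care of the $X$ side.

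The second step handles the $X_n$ side. The hypothesis gives $B_n\in\DM(X_n;N)$ with $\Box(B_n,A)\to0$, and hence $\dconc(B_n,A)\to0$ by $\dconc\leq\Box$. This transfers all of $G$ to the $X_n$ side. The converse direction is the main obstacle: every feature of $X_n$, and not only those of some $N$-feature quotient, must be approximated by features of $X$.

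For the converse direction, I would use uniform precompactness. The family $\{X_n\}$ should be shown to satisfy the hypothesis of \Cref{lem:finite-net-precompactness}. For a fixed $N'$, the quotients $X_n/G'$ over $N'$-element $G'$ are Box-close to $\DM(X;N')$, and that set is compact. I would try to argue that if $X_n$ needed more and more features to form an $\epsilon$-net, then some $N'$-feature quotient of $X_n$ would be far from the compact set $\DM(X;N')$. With $N'=N+1$, for instance, one would take a net of $N$ features together with one feature at Ky Fan distance at least $\epsilon$ from all of them, and this configuration would have to persist in a limit inside $\DM(X;N+1)$. The difficulty is that this only yields a contradiction if one compares it with how many well-separated features $X$ itself can have.

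A cleaner route, once Box precompactness of $(X_n)$ is established, is to take a Box-convergent subsequence $X_n\to Y$. Box-closedness of domination \cite[Theorem~4.16]{gds1} together with \Cref{lem:K-domination-refinement} then gives
\[
(\Box)_H(\DM(X_n;N),\DM(Y;N))\to0,
\]
and therefore $\DM(Y;N)=\DM(X;N)$ for all $N$, since these sets are closed. Equal finite-measurement sets imply $X\preceq Y$ and $Y\preceq X$. To see this, apply closedness of $\mathcal P_X$ and $\mathcal P_Y$ to finite quotients converging to $Y$ and to $X$, which exist by \Cref{subsec:app-screened-closed-density}. Mutual domination then forces isomorphism via \cite{gds1}. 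Hence $\dconc(X_n,X)\leq\Box(X_n,X)\to0$ along the subsequence, which contradicts $\dconc(X_n,X)\geq\epsilon$.

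I therefore expect the precompactness verification to be the crux. The $N$ uniform in \Cref{lem:finite-net-precompactness} must come from the compactness of $\overline{F_X}$, through an $\epsilon$-separated-set counting argument in the finite layers.
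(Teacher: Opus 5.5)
\paragraph{The gap: Box precompactness fails.} Your second route rests on Box precompactness of $(X_n)$, and that property does not follow from the hypothesis of the lemma. So this step is not just an unverified crux; it is a false reduction.

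Take the sequence from the proof of \Cref{cor:tau-box-comparison}: $X_n=(E_n,\{f_{n,1},\ldots,f_{n,n}\},\mu_n)$, where $f_{n,i}$ equals $1/4$ at $i$ and $0$ elsewhere, and $Y=(\{*\},\{0\},\delta_*)$. Both lie in $\DK$.
\begin{itemize}
\item On the unique coupling, $\kf^{\mu_n}(f_{n,i},0)=1/n$, so $\dconc(X_n,Y)=1/n$. \Cref{lem:finite-measurement-proximity} then gives $(\Box)_H(\DM(X_n;N),\DM(Y;N))\leq 2N/n\to0$ for every $N$. Concretely, a quotient of $X_n$ by $k\leq N$ features has an atom of mass $1-k/n$ on which all its features vanish, and $\DM(Y;N)=\{Y\}$.
\item Every nonempty closed $S\subset E_n\times\{*\}$ contains a point where some $f_{n,i}$ equals $1/4$. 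Hence $\Box(X_n,Y)=1/2$ for all $n$.
\item Since $\dconc\leq\Box$ and $\dconc(X_n,Y)\to0$, the only possible Box limit of a subsequence is $Y$. Therefore no subsequence converges in $\Box$.
\end{itemize}

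The same example refutes your counting heuristic. On any $L$ with $\mu_n(L)>1-\epsilon$, the restrictions $f_{n,i}|_L$ for $i\in L$ are pairwise at uniform distance $1/4$. For $\epsilon<1/8$, the conditions of \Cref{lem:finite-net-precompactness} therefore force $\#G>(1-\epsilon)n$. This happens even though $\overline{F_Y}$ is a single point and no element of $\DM(X_n;N')$ is Box-far from $\DM(Y;N')$. Features that separate only on sets of small measure are invisible to the finite layers. This is exactly why the lemma concludes $\dconc$-convergence and not Box convergence.

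\paragraph{What survives and what is missing.} Your first direction is sound. An $\epsilon$-net $G$ of $\overline{F_X}$ in the Ky Fan metric, its quotient $A$, and the approximants $B_n\in\DM(X_n;N)$ give a coupling of $\mu_X$ and $\mu_{X_n}$ on which every feature of $X$ is Ky Fan-close to a feature of $X_n$.

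The missing content is the reverse directed approximation \emph{on that same coupling}. You must show that each $h\in\overline{F_{X_n}}$ is close in measure to some $g\in\overline{F_X}$. That needs an argument in measure rather than uniform nets on large sets, for instance one exploiting higher layers such as $\DM(X_n;N+1)$ together with the Ky Fan compactness of $\overline{F_X}$.

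The paper does not reprove this step. It obtains the lemma by specializing the reconstruction theorem \cite[Proposition~6.24]{gds2} to the family consisting only of the identity, whose measurement sets are the $\DM(X;N)$ used here.
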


This is the finite-measurement reconstruction theorem for the family consisting only of the identity transformation \cite[Proposition~6.24]{gds2}. The correspondence with the notation used here is given in \Cref{subsec:app-pyramid-reconstruction}.

\subsection{Compactification by principal pyramids}
\label{subsec:pyramid-main-compactification}

For $\mathcal P,\mathcal Q\in\Pi_K$, define
\begin{equation}
\label{eq:pyramid-metric}
d_{\Pi,K}(\mathcal P,\mathcal Q)\coloneqq\sum_{N=1}^\infty\frac{1}{2N2^N}(\Box)_H(\DM_K(\mathcal P;N),\DM_K(\mathcal Q;N)).
\end{equation}
Every measurement set is nonempty and compact. The series converges because $\Box\leq2$. For $Z\in\DK$, put
\[
\iota_K(Z)\coloneqq\mathcal P_Z.
\]

\begin{theorem}
\label{thm:K-pyramid-compactification}
The formula in \Cref{eq:pyramid-metric} defines a metric on $\Pi_K$, and $(\Pi_K,d_{\Pi,K})$ is compact. The map
\[
\iota_K\colon(\DK,\dconc)\longrightarrow(\Pi_K,d_{\Pi,K})
\]
is a $1$-Lipschitz topological embedding with dense image. Moreover, the composite map
\[
\iota_K\circ\mathsf C_\tau\colon(\D,\dtau)\longrightarrow(\Pi_K,d_{\Pi,K})
\]
is a topological embedding with dense image.
\end{theorem}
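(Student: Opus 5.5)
We argue in four stages: metricity, compactness, the embedding property and density of $\iota_K$, and finally the composite with $\mathsf C_\tau$.

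\emph{Metricity.} Symmetry and the triangle inequality hold termwise because $(\Box)_H$ is a metric on nonempty compact subsets of $\DK$. For separation, suppose $d_{\Pi,K}(\mathcal P,\mathcal Q)=0$. Then every layer coincides, $\DM_K(\mathcal P;N)=\DM_K(\mathcal Q;N)$. Take $X\in\mathcal P$. By the finite-quotient approximation of \Cref{subsec:app-screened-closed-density}, $X$ is a Box limit of finite-feature quotients $A\preceq X$, and each such $A$ lies in $\DM_K(\mathcal P;N)=\DM_K(\mathcal Q;N)$. Since $\mathcal Q$ is closed, $X\in\mathcal Q$. Hence $\mathcal P=\mathcal Q$.

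\emph{Compactness.} Convergence in $d_{\Pi,K}$ is exactly layerwise Hausdorff convergence, since the weights are summable and each term is bounded by $2$. Let $(\mathcal P_n)$ be a sequence in $\Pi_K$. Each layer sequence $\DM_K(\mathcal P_n;N)$ lies in the hyperspace of the compact set $\DM_K(N)$ (\Cref{lem:bounded-measurement-compact}). That hyperspace is compact, so a diagonal subsequence has every layer convergent.

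Separately, the space of closed subsets of the separable space $(\DK,\Box)$ (\Cref{thm:screened-polish}) is sequentially compact for Painlev\'e--Kuratowski convergence. A further subsequence therefore converges weakly to some closed $\mathcal P$. We must also check $\mathcal P\neq\varnothing$; this follows from the one-feature argument in the proof of \Cref{thm:K-pyramid-weak-limit}. By \Cref{thm:K-pyramid-weak-limit}, $\mathcal P\in\Pi_K$, and by \Cref{lem:K-pyramid-measurements} the subsequence converges to $\mathcal P$ in $d_{\Pi,K}$.

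\emph{The embedding $\iota_K$.} Recall that $\DM(X;N)=\DM_K(\mathcal P_X;N)$. By \Cref{lem:finite-measurement-proximity},
\[
d_{\Pi,K}(\mathcal P_X,\mathcal P_Y)\leq\sum_N\frac{2N\dconc(X,Y)}{2N2^N}=\dconc(X,Y),
\]
so $\iota_K$ is $1$-Lipschitz. Injectivity and continuity of the inverse both follow from \Cref{lem:finite-measurement-reconstruction}. Indeed, $d_{\Pi,K}(\mathcal P_{X_n},\mathcal P_X)\to0$ forces every layer to converge, and hence $\dconc(X_n,X)\to0$. Applying this to a constant sequence gives injectivity.

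\emph{Density of $\iota_K$.} Fix $\mathcal P\in\Pi_K$ and $N_0$. For $N\leq N_0$, choose finite nets of the compact layers $\DM_K(\mathcal P;N)$. Directedness yields a single $Z\in\mathcal P$ dominating all net points. Then $\DM_K(\mathcal P_Z;N)\subset\DM_K(\mathcal P;N)$ by downward closedness, and this layer contains the net. The Hausdorff distance of each of the first $N_0$ layers is therefore at most the net mesh. The tail of the series is small by the choice of $N_0$.

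\emph{The composite map.} $\mathsf C_\tau$ is a topological embedding $(\D,\dtau)\to(\DK,\dconc)$, because \Cref{lem:screened-topology} makes $\dtau$ and $\dconc^\tau$ mutually controlled. Composing with $\iota_K$ gives an embedding. For density, $\mathsf C_\tau(\D)$ is Box dense in $\DK$ (\Cref{thm:screen-density}). Since $\dconc\leq\Box$, it is also $\dconc$-dense, and its image under the continuous map $\iota_K$ is dense in the closure of $\iota_K(\DK)$, which is $\Pi_K$.

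The main obstacle is the compactness step. The key point there is that the layerwise limits do assemble into one $K$-pyramid. We handle it by passing to a subsequence with both layerwise and weak convergence, then invoking \Cref{thm:K-pyramid-weak-limit} and \Cref{lem:K-pyramid-measurements} to identify the two limits.
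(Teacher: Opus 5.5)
Your proof is correct. Its compactness, embedding, and composite-map steps follow the paper's proof essentially verbatim: the Painlev\'e--Kuratowski subsequence theorem followed by \Cref{thm:K-pyramid-weak-limit} and \Cref{lem:K-pyramid-measurements}, then \Cref{lem:finite-measurement-proximity} and \Cref{lem:finite-measurement-reconstruction}, then \Cref{lem:screened-topology} and \Cref{thm:screen-density}. The preliminary diagonal extraction of layerwise Hausdorff limits in your compactness step is redundant, because \Cref{lem:K-pyramid-measurements} already turns the weak limit into layerwise convergence.

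You depart from the paper in two places.
\begin{enumerate}
\item \emph{Separation.} The paper notes that $d_{\Pi,K}(\mathcal P,\mathcal Q)=0$ makes the constant sequence $\mathcal P$ converge weakly to $\mathcal Q$ by \Cref{lem:K-pyramid-measurements}, and then reads off $\mathcal P=\mathcal Q$. You argue directly from equality of the layers, downward closedness, and the finite-quotient density of \Cref{subsec:app-screened-closed-density}. This is essentially the converse direction of that lemma unpacked.
\item \emph{Density.} The paper takes a dense sequence $(Y_n)$ in $\mathcal P$, builds an increasing chain $Z_n$ by directedness, shows that $\mathcal P_{Z_n}\to\mathcal P$ weakly, and again invokes \Cref{lem:K-pyramid-measurements}. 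You take finite nets of the first $N_0$ compact layers, dominate all of them by a single $Z\in\mathcal P$, and bound $d_{\Pi,K}(\mathcal P_Z,\mathcal P)$ directly by the mesh plus the series tail. The inclusion $\mathcal P_Z\subset\mathcal P$ makes one of the two directed Hausdorff terms vanish.
\end{enumerate}

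Your density argument is more elementary and quantitative: it needs neither separability of $\DK$ nor the weak-convergence characterization. The paper's argument instead produces a single monotone sequence of principal pyramids exhausting $\mathcal P$.
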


\begin{proof}
We divide the proof into four steps.

First, \Cref{lem:K-pyramid-measurements} shows that Hausdorff convergence in every finite-measurement coordinate is equivalent to weak convergence of pyramids. The bound $\Box\leq2$ and the tail estimate for the series show that this is also equivalent to convergence in $d_{\Pi,K}$. Details of the series estimate are given in \Cref{subsec:app-pyramid-reconstruction}. If $d_{\Pi,K}(\mathcal P,\mathcal Q)=0$, then the constant sequence $\mathcal P_n=\mathcal P$ converges weakly to $\mathcal Q$. The two conditions in \Cref{def:weak-convergence} give $\mathcal P=\mathcal Q$. The other metric axioms follow from those of the Hausdorff distances.

Second, take a sequence $(\mathcal P_n)\subset\Pi_K$. Since $(\DK,\Box)$ is Polish, the subsequence theorem for sequential Painlev\'e--Kuratowski convergence of closed sets \cite[Theorem~5.2.12]{beer1993topologies} gives a subsequence converging weakly to a closed subset $\mathcal P\subset\DK$. By \Cref{thm:K-pyramid-weak-limit}, the limit satisfies $\mathcal P\in\Pi_K$. The same subsequence converges in $d_{\Pi,K}$ by \Cref{lem:K-pyramid-measurements}. Therefore, $\Pi_K$ is compact.

Third, take $X,Y\in\DK$. The identity $\DM(X;N)=\DM_K(\mathcal P_X;N)$ and \Cref{lem:finite-measurement-proximity}, applied in both directions to $X$ and $Y$, give
\[
(\Box)_H(\DM_K(\mathcal P_X;N),\DM_K(\mathcal P_Y;N))\leq2N\dconc(X,Y).
\]
Multiplying by the coefficient in \Cref{eq:pyramid-metric} and summing over $N$ shows that $\iota_K$ is $1$-Lipschitz. Conversely, suppose that $d_{\Pi,K}(\mathcal P_{X_n},\mathcal P_X)\to0$ as $n\to\infty$. Then every finite-measurement layer converges in the Hausdorff Box distance. Applying \Cref{lem:finite-measurement-reconstruction} to $X_n$ and $X$ gives $\dconc(X_n,X)\to0$ as $n\to\infty$. Thus $\iota_K$ is a topological embedding.

Fourth, fix $\mathcal P\in\Pi_K$. Separability of $\DK$ gives a dense sequence $(Y_n)$ in $\mathcal P$. Set $Z_1\coloneqq Y_1$. Using directedness, choose $Z_n\in\mathcal P$ inductively for $n\geq2$ so that $Y_n\preceq Z_n$ and $Z_{n-1}\preceq Z_n$. Given $W\in\mathcal P$ and $\epsilon>0$, choose $m$ such that $\Box(W,Y_m)<\epsilon$. Then $Y_m\in\mathcal P_{Z_n}$ for every $n\geq m$. Conversely, if $W_n\in\mathcal P_{Z_n}$ converges in the Box distance along a subsequence, then $W_n\in\mathcal P$ and closedness places its limit in $\mathcal P$. Thus $\mathcal P_{Z_n}$ converges weakly to $\mathcal P$, and
\[
d_{\Pi,K}(\mathcal P_{Z_n},\mathcal P)\longrightarrow0\qquad(n\to\infty).
\]
Therefore, $\iota_K(\DK)$ is dense. Finally, \Cref{lem:screened-topology} shows that $\mathsf C_\tau\colon(\D,\dtau)\to(\DK,\dconc)$ is a topological embedding. Its image is Box dense by \Cref{thm:screen-density}, and hence it is also dense for the observable distance because $\dconc\leq\Box$. The composite $\iota_K\circ\mathsf C_\tau$ is therefore a topological embedding with dense image. This completes the proof.
\end{proof}

\begin{corollary}
\label{cor:tau-pyramid-comparison}
For all $X,Y\in\D$,
\[
d_{\Pi,K}\bigl(\iota_K(\mathsf C_\tau(X)),\iota_K(\mathsf C_\tau(Y))\bigr)\leq\min\{\Box_\tau(X,Y),\sqrt{\dtau(X,Y)}\}.
\]
\end{corollary}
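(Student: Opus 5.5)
We prove the two bounds separately and then take the minimum. Both come from the $1$-Lipschitz estimate for $\iota_K$ with respect to $\dconc$ in the third step of the proof of \Cref{thm:K-pyramid-compactification}, namely
\[
d_{\Pi,K}(\mathcal P_X,\mathcal P_Y)\leq\dconc(X,Y)\qquad(X,Y\in\DK).
\]
Apply this with the compact screens $\mathsf C_\tau(X)$ and $\mathsf C_\tau(Y)$, which lie in $\DK$ by \Cref{lem:compact-screen-gds}. This gives
\[
d_{\Pi,K}\bigl(\iota_K(\mathsf C_\tau(X)),\iota_K(\mathsf C_\tau(Y))\bigr)\leq\dconc^\tau(X,Y).
\]
It therefore suffices to bound $\dconc^\tau(X,Y)$ above by $\Box_\tau(X,Y)$ and by $\sqrt{\dtau(X,Y)}$.

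For the first bound, use the general inequality $\dconc\leq\Box$ from \cite[Proposition~5.8]{gds1}, applied to the two compact screens. By \Cref{def:screened-observable-distance} and \Cref{def:pullback-box}, this reads $\dconc^\tau(X,Y)\leq\Box_\tau(X,Y)$.

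For the second bound, \Cref{lem:screened-topology} gives $\dconc^\tau(X,Y)\leq\sqrt{\dtau(X,Y)}$ directly.

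Combining the two bounds gives the minimum in the statement. The only point to check is the constant in the $1$-Lipschitz claim. \Cref{lem:finite-measurement-proximity} gives the layer bound $(\Box)_H\leq2N\dconc$. Weighting it by $1/(2N2^N)$ and summing yields
\[
\sum_{N\geq1}2^{-N}\dconc(X,Y)=\dconc(X,Y).
\]
Hence the Lipschitz constant is exactly $1$, with no extra factor. This is the step I would verify most carefully, but it is recorded explicitly in \Cref{thm:K-pyramid-compactification}, so it can be cited as stated.
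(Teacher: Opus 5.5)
Your proposal is correct and follows the paper's proof: the $1$-Lipschitz estimate for $\iota_K$ in \Cref{thm:K-pyramid-compactification} bounds the left-hand side by $\dconc^\tau(X,Y)$. That quantity is then at most $\Box_\tau(X,Y)$, by $\dconc\leq\Box$ applied to the compact screens, and at most $\sqrt{\dtau(X,Y)}$, by \Cref{lem:screened-topology}. Your check of the constant, $\sum_{N\geq1}\frac{2N}{2N2^N}=1$, is also right.
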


\begin{proof}
The $1$-Lipschitz estimate in \Cref{thm:K-pyramid-compactification} bounds the left-hand side by $\dconc^\tau(X,Y)$. Applying $\dconc\leq\Box$ to the two compact screens gives $\dconc^\tau(X,Y)\leq\Box_\tau(X,Y)$. By \Cref{lem:screened-topology}, we also have $\dconc^\tau(X,Y)\leq\sqrt{\dtau(X,Y)}$. This completes the proof.
\end{proof}

\section{Tensorization over a Common Factor}
\label{sec:tensorization}

The bounded coordinate is stable not only under finite-observation compactification but also under products with a common factor. Independent approximation of product features on different slices can destroy the Lipschitz condition between slices. We repair that condition by a pooling operation that does not increase either relevant error.

\subsection{The sum-metric product and incompatible slices}
\label{subsec:l1-slice-obstacle}

\begin{definition}[$\ell_1$ product {\cite[Definition~3.66]{shioya2024sgc}}]
\label{def:l1-product}
For mm-spaces $X$ and $W$, their \emph{$\ell_1$ product} is
\[
X\times_1W\coloneqq(X\times W,d_{X\times_1W},\mu_X\otimes\mu_W),
\]
where
\[
d_{X\times_1W}((x,w),(x',w'))\coloneqq d_X(x,x')+d_W(w,w').
\]
We regard this mm-space as the geometric data set whose features are all real-valued $1$-Lipschitz functions.
\end{definition}

Let $W=\{w_1,\ldots,w_N\}$ be a finite mm-space, and write
\[
\mu_W=\sum_{i=1}^Na_i\delta_{w_i},\qquad a_i>0.
\]
For $F\in\operatorname{Lip}_1(X\times_1W)$, put $f_i(x)\coloneqq F(x,w_i)$. Each slice $f_i$ is $1$-Lipschitz on $X$, and
\[
|f_i(x)-f_j(x)|\leq d_W(w_i,w_j)
\]
for every $x\in X$ and all $i,j$. Conversely, a family of slices satisfying these two conditions defines a $1$-Lipschitz function on $X\times_1W$.

Independent approximation of the slices need not preserve the second condition. Consider a two-point factor with $d_W(w_1,w_2)=r>0$, and suppose that $f_1=r/2$ and $f_2=-r/2$ at one point. The independent approximations $r/2+\eta$ and $-r/2-\eta$ each have error $\eta$, while their difference is $r+2\eta$. Thus arbitrarily small independent errors violate the factor-direction Lipschitz condition at the boundary of the constraint.

\subsection{Pooling restores the constraint}
\label{subsec:pooling}

The feasible region for the slice values and its weighted bounded-coordinate metric are
\[
K_W\coloneqq\{z\in\R^N\mid |z_i-z_j|\leq d_W(w_i,w_j)\text{ for all }i,j\},
\]
and
\[
d_{\tau,a}(z,z')\coloneqq\sum_{i=1}^Na_i|\tau(z_i)-\tau(z_i')|.
\]
We call the repair that moves $\R^N$ into $K_W$ \emph{pooling}. It will preserve the slice constraints without increasing either $d_{\tau,a}$ or the uniform distance.

\begin{lemma}
\label{lem:order-mass-contraction}
Let $J$ be an interval, let $a_1,\ldots,a_N$ be positive, and let $S\colon J^N\to J^N$ be coordinatewise nondecreasing. Suppose that
\[
\sum_{k=1}^Na_k(Su)_k=\sum_{k=1}^Na_ku_k
\]
for every $u\in J^N$. Then
\[
\sum_{k=1}^Na_k|(Su)_k-(Sv)_k|\leq\sum_{k=1}^Na_k|u_k-v_k|
\]
for all $u,v\in J^N$.
\end{lemma}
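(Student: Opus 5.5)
This is the Crandall--Tartar lemma: a monotone map that preserves a weighted sum is a contraction in weighted $\ell_1$. The plan is to reduce the weighted $\ell_1$ distance to positive parts and then use monotonicity to compare $S(u\vee v)$ with $Su$.

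First I would reduce to estimating positive parts. For real numbers, $|s|=2s^+-s$. The mass identity gives
\[
\sum_k a_k\bigl((Su)_k-(Sv)_k\bigr)=\sum_k a_k(u_k-v_k).
\]
It therefore suffices to prove
\[
\sum_k a_k\bigl((Su)_k-(Sv)_k\bigr)^+\leq\sum_k a_k(u_k-v_k)^+ .
\]
The desired inequality then follows by doubling this estimate and subtracting the equal signed sums.

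Next I would set $w\coloneqq u\vee v$, taken coordinatewise. Since $J$ is an interval and $w_k\in\{u_k,v_k\}$, we have $w\in J^N$. Because $w\geq u$ and $w\geq v$ coordinatewise, monotonicity of $S$ gives $Sw\geq Su$ and $Sw\geq Sv$. Hence $(Su)_k-(Sv)_k\leq(Sw)_k-(Sv)_k$, and the right-hand side is nonnegative. Taking positive parts gives
\[
\bigl((Su)_k-(Sv)_k\bigr)^+\leq(Sw)_k-(Sv)_k .
\]
Multiplying by $a_k>0$ and summing, mass conservation applied to $w$ and $v$ yields
\[
\sum_k a_k\bigl((Sw)_k-(Sv)_k\bigr)=\sum_k a_k(w_k-v_k)=\sum_k a_k(u_k-v_k)^+ .
\]
This proves the positive-part estimate, and the lemma follows.

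I do not expect a serious obstacle. The only points to check are that $u\vee v$ stays in the domain $J^N$, which holds because it is a coordinatewise choice between $u_k$ and $v_k$, and that the weights are positive, so that the inequalities survive multiplication and summation. If one prefers symmetry, the same argument with $u\wedge v$ bounds the negative parts directly, and adding the two estimates gives the claim without the doubling identity.
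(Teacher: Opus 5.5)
Your proof is correct and essentially the same as the paper's: the paper cites Crandall--Tartar's Proposition~1 for the finite measure space with atoms of mass $a_k$ on the $\vee$-closed set $J^N$, and your argument is that proposition's proof. You have written out the key step $\bigl((Su)_k-(Sv)_k\bigr)^+\leq(S(u\vee v))_k-(Sv)_k$ together with mass conservation, and you correctly noted why $u\vee v$ stays in $J^N$.
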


This is \cite[Proposition~1, p.~385]{crandall-tartar1980nonexpansiveorder} applied to the finite measure space in which the atom $k$ has mass $a_k$ and to the subset $J^N\subset L^1$.

Fix a strictly increasing homeomorphism $\sigma\colon\R\to J$ onto an open interval and indices $i<j$. Put $r_{ij}\coloneqq d_W(w_i,w_j)$. The map $P_{ij}^\sigma\colon\R^N\to\R^N$ leaves all coordinates other than $i,j$ unchanged and fixes $z$ when $|z_i-z_j|\leq r_{ij}$. If $z_i>z_j+r_{ij}$, take the unique $t\in[z_j,z_i-r_{ij}]$ satisfying
\[
a_i\sigma(t+r_{ij})+a_j\sigma(t)=a_i\sigma(z_i)+a_j\sigma(z_j),
\]
and replace $(z_i,z_j)$ by $(t+r_{ij},t)$. Strict monotonicity and the two endpoint inequalities give existence and uniqueness of $t$. For a violation in the opposite direction, interchange $i$ and $j$.

\begin{lemma}
\label{lem:pairwise-pooling}
The map $P_{ij}^\sigma$ fixes every vector satisfying the $(i,j)$ constraint, and
\[
\sum_{k=1}^Na_k|\sigma((P_{ij}^\sigma z)_k)-\sigma((P_{ij}^\sigma z')_k)|\leq\sum_{k=1}^Na_k|\sigma(z_k)-\sigma(z'_k)|
\]
for all $z,z'\in\R^N$.
\end{lemma}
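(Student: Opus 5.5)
The plan is to deduce the statement from \Cref{lem:order-mass-contraction}, applied in the $\sigma$-coordinate. Define $S\colon J^N\to J^N$ by
\[
S(u)\coloneqq\sigma\circ P_{ij}^\sigma\circ\sigma^{-1}(u),
\]
with $\sigma$ and $\sigma^{-1}$ applied coordinatewise. Since $\sigma$ is a homeomorphism onto $J$ and $P_{ij}^\sigma$ maps $\R^N$ to $\R^N$, the map $S$ indeed sends $J^N$ into $J^N$. The fixing property is immediate from the definition: when $|z_i-z_j|\leq r_{ij}$ the map does nothing. The defining equation for $t$ says exactly that $a_i(Su)_i+a_j(Su)_j=a_iu_i+a_ju_j$, and all other coordinates are unchanged. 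Hence $S$ preserves the weighted sum $\sum_ka_k(Su)_k=\sum_ka_ku_k$. Once $S$ is known to be coordinatewise nondecreasing, \Cref{lem:order-mass-contraction} gives exactly the asserted inequality after substituting $u=\sigma(z)$ and $v=\sigma(z')$.

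The main step is monotonicity, that is, $u\leq v$ coordinatewise implies $Su\leq Sv$. Coordinates $k\neq i,j$ are trivial. I will write $P=P_{ij}^\sigma$ in $z$-coordinates; since $\sigma$ is increasing it suffices to show that $P$ is coordinatewise nondecreasing on $\R^N$. Restrict to the pair $(z_i,z_j)\in\R^2$ and split the plane into three regions: the feasible band $|z_i-z_j|\leq r$, the upper violation region $z_i>z_j+r$, and the lower violation region.

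In the upper region, $t=t(z_i,z_j)$ is defined by
\[
G(t)\coloneqq a_i\sigma(t+r)+a_j\sigma(t)=c,\qquad c\coloneqq a_i\sigma(z_i)+a_j\sigma(z_j).
\]
Because $G$ is strictly increasing and continuous, $t$ is a nondecreasing continuous function of $c$, and $c$ is increasing in each of $z_i$ and $z_j$. So within the region the outputs $(t+r,t)$ are nondecreasing in both inputs; the lower region is symmetric. The map is also continuous across the boundaries: on $z_i=z_j+r$ the solution is $t=z_j$, so the output equals the input.

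The obstacle is comparing two points $z\leq z'$ that lie in different regions. I plan to handle this with a segment argument. Along the segment from $z$ to $z'$ every coordinate is nondecreasing, and the map is continuous and piecewise monotone in each coordinate direction along it. I will argue that $(Pz)_k$ is nondecreasing along the segment: on each piece the segment meets finitely many region pieces, since the regions are convex and so the segment meets each in an interval. On each piece the output is a nondecreasing function of a nondecreasing parameter, and continuity at the junctions glues the pieces together. For the piece in the band, the output equals the input, which is nondecreasing. For a violation piece, the output is nondecreasing because $c$ increases along the segment. This proves monotonicity and completes the reduction to \Cref{lem:order-mass-contraction}.
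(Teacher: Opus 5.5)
Your proposal is correct and follows the paper's route. You conjugate $P_{ij}^\sigma$ by $\sigma$ to get a mass-preserving map $S$ on $J^N$, prove monotonicity by the three-region analysis glued by continuity at the boundaries, and then invoke \Cref{lem:order-mass-contraction}. The only minor difference is that you glue along the straight segment from $z$ to $z'$, whereas the paper's appendix varies one input coordinate at a time with the other fixed and obtains order preservation by chaining.
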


\begin{proof}
The first assertion follows from the definition. The three regions and their boundaries are examined in \Cref{subsec:app-pairwise-pooling}. The calculation shows that each output coordinate of $P_{ij}^\sigma$ is nondecreasing in every input coordinate. Therefore,
\[
S\coloneqq\sigma^N\circ P_{ij}^\sigma\circ(\sigma^{-1})^N\colon J^N\longrightarrow J^N
\]
is coordinatewise nondecreasing. The defining equation for $P_{ij}^\sigma$ shows that $S$ preserves the weighted mass. Applying \Cref{lem:order-mass-contraction} to this map gives the asserted inequality. This completes the proof.
\end{proof}

From now on, set $\sigma=\tau$ and write $P_{ij}\coloneqq P_{ij}^\tau$. For $s\geq0$, define $T_s\colon(-1/2,1/2)\to(-1/2,1/2)$ by
\[
T_s(u)\coloneqq\tau(\tau^{-1}(u)+s).
\]

\begin{lemma}
\label{lem:tau-concavity}
The map $T_s$ is increasing and concave. Moreover,
\[
\|P_{ij}z-P_{ij}z'\|_\infty\leq\|z-z'\|_\infty
\]
for all $z,z'\in\R^N$.
\end{lemma}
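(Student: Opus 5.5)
The first assertion is a direct computation with the addition formula $\tanh(x+s)=(\tanh x+\tanh s)/(1+\tanh x\tanh s)$. Write $v\coloneqq2u\in(-1,1)$ and $c\coloneqq\tanh s\in[0,1)$. Then
\[
T_s(u)=\tfrac12\,g(v),\qquad g(v)\coloneqq\frac{v+c}{1+cv}.
\]
Differentiation gives
\[
g'(v)=\frac{1-c^2}{(1+cv)^2}>0,\qquad g''(v)=-\frac{2c(1-c^2)}{(1+cv)^3}\leq0,
\]
since $1+cv>0$. Hence $T_s$ is increasing and concave.

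For the sup-norm estimate I would combine two properties of $P_{ij}$. The first is coordinatewise monotonicity, already recorded in the proof of \Cref{lem:pairwise-pooling}. The second is translation subcommutation: writing $\mathbf 1$ for the all-ones vector,
\[
P_{ij}(z+s\mathbf 1)\leq P_{ij}z+s\mathbf 1\quad(s\geq0),\qquad P_{ij}(z-s\mathbf 1)\geq P_{ij}z-s\mathbf 1\quad(s\geq0),
\]
componentwise. Given these, put $\delta\coloneqq\|z-z'\|_\infty$. Then $z'\leq z+\delta\mathbf 1$, so monotonicity gives $P_{ij}z'\leq P_{ij}(z+\delta\mathbf 1)\leq P_{ij}z+\delta\mathbf 1$. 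Symmetrically $P_{ij}z'\geq P_{ij}z-\delta\mathbf 1$, which proves the claim. For the second inequality I would use that $\tau$ is odd: the defining equation shows $P_{ij}(-z)=-P_{ij}z$, so negation converts the first inequality into the second. It therefore suffices to prove the first.

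For the first inequality, note that a translation does not change $z_i-z_j$. Thus $z$ and $z+s\mathbf 1$ lie in the same region, the fixed case is trivial, and coordinates other than $i,j$ are unaffected. Suppose that $z_i>z_j+r_{ij}$, write $a\coloneqq a_i$, $b\coloneqq a_j$, $r\coloneqq r_{ij}$, and let $t$ and $t_s$ be the pooled parameters for $z$ and $z+s\mathbf 1$. I need $t_s\leq t+s$. The left-hand side of the defining equation is increasing in its parameter, so this is equivalent to
\[
a\tau(t+s+r)+b\tau(t+s)\geq a\tau(z_i+s)+b\tau(z_j+s).
\]
In the $\tau$-coordinate, set $U\coloneqq\tau(z_i)$, $V\coloneqq\tau(z_j)$, $A\coloneqq\tau(t+r)$, $B\coloneqq\tau(t)$. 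Since $t\in[z_j,z_i-r]$, we have $U\geq A\geq B\geq V$ and $aA+bB=aU+bV$. The desired inequality becomes $aT_s(A)+bT_s(B)\geq aT_s(U)+bT_s(V)$.

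This Karamata-type step is the main obstacle, and I would handle it by Jensen's inequality. Write $A=\alpha U+(1-\alpha)V$ and $B=\beta U+(1-\beta)V$ with $\alpha,\beta\in[0,1]$; if $U=V$ everything is equal. The mass identity then forces $a\alpha+b\beta=a$, and hence $a(1-\alpha)+b(1-\beta)=b$. Concavity of $T_s$ gives $T_s(A)\geq\alpha T_s(U)+(1-\alpha)T_s(V)$ and the analogous bound for $B$. Weighting these by $a$ and $b$ and summing yields exactly $aT_s(U)+bT_s(V)$ on the right. The opposite violation $z_j>z_i+r$ is handled by interchanging the roles of $i$ and $j$.
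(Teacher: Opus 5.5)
Your proposal is correct and follows essentially the paper's route: the derivative computation from the addition formula, then the translation inequality $P_{ij}(z+s\mathbf 1)\leq P_{ij}z+s\mathbf 1$ from weighted-mean preservation and concavity of $T_s$ (your Jensen decomposition is the paper's decreasing-secant-slope comparison), then coordinatewise monotonicity. The only difference is that you derive the lower inequality from oddness of $\tau$, whereas the paper just interchanges $z$ and $z'$ in the upper inequality, so your oddness step is correct but unnecessary.
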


\begin{proof}
The addition formula for the hyperbolic tangent gives
\[
\begin{aligned}
T_s(u)&=\frac{u+(\tanh s)/2}{1+2u\tanh s},\\
T_s'(u)&=\frac{1-\tanh^2s}{(1+2u\tanh s)^2},\\
T_s''(u)&=-\frac{4\tanh s(1-\tanh^2s)}{(1+2u\tanh s)^3}.
\end{aligned}
\]
For $-1/2<u<1/2$ and $s\geq0$, the first derivative is positive and the second derivative is nonpositive.

Pooling preserves the weighted mean of the two $\tau$ coordinates while moving them closer. Applying concavity of $T_s$ to these two points gives
\[
P_{ij}(z+s\boldsymbol1)\leq P_{ij}z+s\boldsymbol1,
\]
where $\boldsymbol1\coloneqq(1,\ldots,1)$. The weighted two-point comparison and the two violation directions are verified in \Cref{subsec:app-pairwise-pooling}. Since $P_{ij}$ is coordinatewise nondecreasing, $\|z-z'\|_\infty\leq s$ implies
\[
P_{ij}z'\leq P_{ij}(z+s\boldsymbol1)\leq P_{ij}z+s\boldsymbol1.
\]
Interchanging $z$ and $z'$ proves the uniform estimate. This completes the proof.
\end{proof}

\begin{theorem}
\label{thm:pooling-retraction}
There is a retraction $P_W\colon\R^N\to K_W$ such that
\[
d_{\tau,a}(P_Wz,P_Wz')\leq d_{\tau,a}(z,z'),\qquad\|P_Wz-P_Wz'\|_\infty\leq\|z-z'\|_\infty
\]
for all $z,z'\in\R^N$.
\end{theorem}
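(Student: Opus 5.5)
The plan is to obtain $P_W$ as a limit of cyclic compositions of the pairwise poolings $P_{ij}$. The appendix on cyclic pooling mentioned in the introduction suggests exactly this. Fix an enumeration of all pairs $i<j$ and let
\[Q\coloneqq P_{i_Mj_M}\circ\cdots\circ P_{i_1j_1}\]
be one full cycle. By \Cref{lem:pairwise-pooling} and \Cref{lem:tau-concavity}, each $P_{ij}$ is nonexpansive for both $d_{\tau,a}$ and $\|\cdot\|_\infty$. Hence so is every iterate $Q^m$. Each $P_{ij}$ fixes $K_W$ pointwise, since $K_W$ is the intersection of the pairwise constraint sets. Therefore $Q$ fixes $K_W$ pointwise as well.

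The key steps, in order, are as follows.

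\textbf{Step 1 (boundedness).} Each $P_{ij}$ replaces two coordinates by values in the interval $[\min(z_i,z_j),\max(z_i,z_j)]$; indeed $t$ and $t+r_{ij}$ lie in $[z_j,z_i]$. So the orbit $(Q^mz)_m$ stays in the cube $[\min_kz_k,\max_kz_k]^N$.

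\textbf{Step 2 (Lyapunov function).} Each $P_{ij}$ preserves the weighted mass $\sum_ka_k\tau(z_k)$. I will use a strictly convex functional of the $\tau$-coordinates, for instance $\Phi(z)=\sum_ka_k\tau(z_k)^2$. A nontrivial pooling strictly moves the two $\tau$-values toward each other while preserving their weighted sum. So $\Phi$ strictly decreases whenever $z$ violates the $(i,j)$ constraint, and the decrease is bounded below continuously in terms of the size of the violation.

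\textbf{Step 3 (convergence of the orbit).} By compactness, the orbit has cluster points. Continuity of $Q$ (which follows from nonexpansiveness) together with monotonicity of $\Phi$ shows that every cluster point $z^*$ satisfies $\Phi(Q z^*)=\Phi(z^*)$. By Step 2, $z^*$ then violates no constraint, so $z^*\in K_W$. Since $z^*$ is a fixed point and $Q$ is $\|\cdot\|_\infty$-nonexpansive, the distance $\|Q^mz-z^*\|_\infty$ is nonincreasing in $m$. It tends to $0$ along a subsequence, so the whole orbit converges to $z^*$.

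\textbf{Step 4 (conclusion).} Define $P_Wz\coloneqq\lim_mQ^mz$. Then $P_W$ maps into $K_W$ and fixes $K_W$, so it is a retraction. Both metrics are continuous on $\R^N$ (the metric $d_{\tau,a}$ because $\tau$ is continuous). Hence the two nonexpansive inequalities pass to the limit from the corresponding inequalities for $Q^m$.

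The main obstacle is Step 3, specifically the claim that a fixed point of the whole cycle $Q$ lies in $K_W$. A cycle could in principle move a point and then move it back. The strict decrease of $\Phi$ under each nontrivial pooling rules this out. Suppose $Q z^*=z^*$. Then $\Phi$ is constant along the composition, so every intermediate pooling is trivial, which means every constraint holds at $z^*$. What needs care is the strict monotonicity of $\Phi$ under a single nontrivial $P_{ij}$. I would verify it by writing the pooled $\tau$-pair as a weighted-mean-preserving contraction of the original pair with unequal endpoints and using strict convexity of the square. If the quadratic $\Phi$ proved awkward, a fallback is to use $\sum_k a_k\tau(z_k)\,z_k$ or the maximal violation together with finiteness of the pairs.
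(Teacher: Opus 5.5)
Your proposal is correct and follows essentially the same route as the paper: cyclic pairwise pooling, the energy $\sum_k a_k\tau(q_k)^2$ to show every cluster point lies in $K_W$, nonexpansiveness to upgrade subsequential convergence to convergence of the whole orbit, and passing both inequalities to the limit. The differences are minor. You iterate the full cycle $Q$ and apply the Lyapunov argument to $Q$ directly, which avoids the paper's argument that single-step lengths tend to zero. You also use $\|\cdot\|_\infty$-nonexpansiveness for convergence of the orbit, where the paper uses $d_{\tau,a}$ together with uniform continuity of $\tau^{-1}$ on a compact box.
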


\begin{proof}
For $N=1$, take the identity map. Suppose that $N\geq2$, enumerate the unordered pairs as $e_1,\ldots,e_m$, and write $P_e$ for the pooling map associated with a pair $e$. Starting from $z^{(0)}\coloneqq z$, define
\[
z^{(n+1)}\coloneqq P_{e_{1+(n\bmod m)}}z^{(n)}.
\]
Each step replaces the two affected coordinates by values between them. Hence every iterate lies in $[\min_i z_i,\max_i z_i]^N$.

The energy
\[
E(q)\coloneqq\sum_{i=1}^Na_i\tau(q_i)^2
\]
decreases at every step, and its decrease vanishes exactly when the current pair satisfies its constraint. In \Cref{subsec:app-pooling-convergence}, continuity of the decrease is used to prove $\|z^{(n+1)}-z^{(n)}\|_\infty\to0$ as $n\to\infty$ and then to show that every cluster point satisfies all pairwise constraints. Nonexpansiveness from \Cref{lem:pairwise-pooling} then forces the full sequence to converge to the same cluster point. Denote this limit by $P_Wz$.

If $z\in K_W$, then every step fixes $z$, so $P_W$ is a retraction. Apply the same pooling sequence to two initial vectors and use \Cref{lem:pairwise-pooling,lem:tau-concavity} at every step. Letting $n\to\infty$ gives the two displayed inequalities. This completes the proof.
\end{proof}

\subsection{Finite and general factors}
\label{subsec:factor-contraction}

Let $X,Y$ be mm-spaces and $\pi\in\T(\mu_X,\mu_Y)$. For $f\in\operatorname{Lip}_1(X)$ and $g\in\operatorname{Lip}_1(Y)$, put
\[
c_\tau^\pi(f,g)\coloneqq\int_{X\times Y}|\tau(f(x))-\tau(g(y))|\,d\pi(x,y),
\]
and define the directed fixed-coupling quantity by
\[
\overrightarrow D_\tau^\pi(X,Y)\coloneqq\sup_{f\in\operatorname{Lip}_1(X)}\inf_{g\in\operatorname{Lip}_1(Y)}c_\tau^\pi(f,g).
\]
If $\pi^{\mathrm{op}}\coloneqq(\pr_2,\pr_1)_*\pi$, then the fixed-coupling quantity in \Cref{def:dtau} is
\[
D_\tau^\pi(X,Y)=\max\{\overrightarrow D_\tau^\pi(X,Y),\overrightarrow D_\tau^{\pi^{\mathrm{op}}}(Y,X)\}.
\]
We also write
\[
\pi_W^\Delta\coloneqq(\operatorname{id}_W,\operatorname{id}_W)_*\mu_W
\]
for the diagonal coupling of $W$.

\begin{lemma}
\label{lem:finite-factor-contraction}
Let $X,Y$ be mm-spaces, let $W$ be a finite mm-space, and let $\pi\in\T(\mu_X,\mu_Y)$. Then
\[
\overrightarrow D_\tau^{\pi\otimes\pi_W^\Delta}(X\times_1W,Y\times_1W)\leq\overrightarrow D_\tau^\pi(X,Y).
\]
\end{lemma}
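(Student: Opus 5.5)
Fix $F\in\operatorname{Lip}_1(X\times_1W)$ and $\epsilon>0$, and write $f_i\coloneqq F(\cdot,w_i)$ for its slices. Each $f_i$ lies in $\operatorname{Lip}_1(X)$. By definition of $\overrightarrow D_\tau^\pi(X,Y)$ we can choose $g_i\in\operatorname{Lip}_1(Y)$ with
\[
c_\tau^\pi(f_i,g_i)\leq\overrightarrow D_\tau^\pi(X,Y)+\epsilon\qquad(i=1,\dots,N).
\]
The slices $g_i$ need not satisfy the factor constraint $|g_i(y)-g_j(y)|\leq d_W(w_i,w_j)$. We repair this pointwise with the pooling retraction $P_W$ of \Cref{thm:pooling-retraction}. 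For $y\in Y$ put $G(y,w_i)\coloneqq(P_W(g_1(y),\dots,g_N(y)))_i$. Since $P_W$ maps into $K_W$, the slices of $G$ satisfy the factor-direction constraint.

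The uniform estimate in \Cref{thm:pooling-retraction} gives
\[
\|P_Wg(y)-P_Wg(y')\|_\infty\leq\|g(y)-g(y')\|_\infty\leq d_Y(y,y'),
\]
because every $g_i$ is $1$-Lipschitz. So each slice of $G$ is $1$-Lipschitz on $Y$. By the slice characterization in \Cref{subsec:l1-slice-obstacle}, $G\in\operatorname{Lip}_1(Y\times_1W)$. Measurability is immediate, since $P_W$ is continuous: it is $\|\cdot\|_\infty$-nonexpansive.

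Next we compute the cost on the coupling $\pi\otimes\pi_W^\Delta$, which is
\[
\int\sum_i a_i|\tau(F(x,w_i))-\tau(G(y,w_i))|\,d\pi(x,y)=\int d_{\tau,a}\bigl(f(x),P_Wg(y)\bigr)\,d\pi .
\]
Here $f(x)\coloneqq(f_i(x))_i\in K_W$ is fixed by the retraction, so $f(x)=P_Wf(x)$. The $d_{\tau,a}$-nonexpansiveness of $P_W$ then gives
\[
d_{\tau,a}\bigl(f(x),P_Wg(y)\bigr)\leq d_{\tau,a}\bigl(f(x),g(y)\bigr)=\sum_i a_i|\tau(f_i(x))-\tau(g_i(y))|.
\]
Integrating yields $\sum_ia_ic_\tau^\pi(f_i,g_i)\leq\overrightarrow D_\tau^\pi(X,Y)+\epsilon$, using $\sum_i a_i=1$. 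Taking the infimum over $G$, then the supremum over $F$, and letting $\epsilon\downarrow0$ proves the lemma.

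The real difficulty is contained in \Cref{thm:pooling-retraction}, and the rest is the bookkeeping above. The step that needs care is the use of the retraction property on the $f$-side: it is what makes the comparison to the unpooled $g$ lose nothing. A second point to check is measurability of the selection $y\mapsto g_i(y)$ and of $G$, which holds because the $g_i$ are continuous and $P_W$ is continuous.
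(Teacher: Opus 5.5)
Your proposal is correct and follows the paper's own proof essentially step for step. You slice $F$, choose near-optimal partners $g_i\in\operatorname{Lip}_1(Y)$ slice by slice, and repair them pointwise with the pooling retraction $P_W$. Uniform nonexpansiveness then gives the $Y$-Lipschitz property, and the identity $P_Wf(x)=f(x)$ together with $d_{\tau,a}$-nonexpansiveness gives the cost bound.
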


\begin{proof}
Put $\delta\coloneqq\overrightarrow D_\tau^\pi(X,Y)$. Given $F\in\operatorname{Lip}_1(X\times_1W)$, put $f_i(x)\coloneqq F(x,w_i)$. Then $f(x)\coloneqq(f_i(x))_{i=1}^N$ belongs to $K_W$. For $\epsilon>0$, choose $h_i\in\operatorname{Lip}_1(Y)$ such that
\[
c_\tau^\pi(f_i,h_i)\leq\delta+\epsilon.
\]

Put $h(y)\coloneqq(h_1(y),\ldots,h_N(y))$ and $g(y)\coloneqq P_Wh(y)$. Uniform nonexpansiveness in \Cref{thm:pooling-retraction} shows that each component $g_i$ is $1$-Lipschitz in the $Y$ direction. Define $G(y,w_i)\coloneqq g_i(y)$. Since $g(y)\in K_W$, we have
\[
\lvert G(y,w_i)-G(y',w_j)\rvert
\leq d_Y(y,y')+d_W(w_i,w_j).
\]
Thus $G$ is $1$-Lipschitz on $Y\times_1W$. The identities $P_Wf(x)=f(x)$ and the weighted-distance estimate give
\[
c_\tau^{\pi\otimes\pi_W^\Delta}(F,G)\leq\sum_{i=1}^Na_i c_\tau^\pi(f_i,h_i)\leq\delta+\epsilon.
\]
Taking the infimum over $G$, the supremum over $F$, and then letting $\epsilon\downarrow0$ proves the assertion. This completes the proof.
\end{proof}

\begin{theorem}
\label{thm:tau-tensorization}
For mm-spaces $X,Y,W$ and $\pi\in\T(\mu_X,\mu_Y)$,
\[
D_\tau^{\pi\otimes\pi_W^\Delta}(X\times_1W,Y\times_1W)=D_\tau^\pi(X,Y).
\]
Moreover,
\[
\dtau(X\times_1W,Y\times_1W)\leq\dtau(X,Y).
\]
\end{theorem}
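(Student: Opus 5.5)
The equality splits into two directed inequalities, each checked in both directions (from $X$ to $Y$ and from $Y$ to $X$). Throughout I use that $D_\tau^\pi$ is a Hausdorff distance of $L^1$-closures. Replacing $\Kc_\tau(\cdot)$ by the set of transformed features $\{\tau\circ f\mid f\in\operatorname{Lip}_1\}$ therefore does not change it, since a Hausdorff distance is unchanged by passing to closures. Hence $D_\tau^\pi(X,Y)=\max\{\overrightarrow D_\tau^\pi(X,Y),\overrightarrow D_\tau^{\pi^{\mathrm{op}}}(Y,X)\}$ with $\overrightarrow D$ as defined before \Cref{lem:finite-factor-contraction}. By symmetry of the roles of $X$ and $Y$, it suffices to show
\[
\overrightarrow D_\tau^{\pi\otimes\pi_W^\Delta}(X\times_1W,Y\times_1W)=\overrightarrow D_\tau^\pi(X,Y).
\]
Here $\pi\otimes\pi_W^\Delta$ is regarded, after reordering coordinates, as a coupling of $\mu_X\otimes\mu_W$ and $\mu_Y\otimes\mu_W$.

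\emph{Lower bound.} Given $f\in\operatorname{Lip}_1(X)$, put $F(x,w)\coloneqq f(x)$, which is $1$-Lipschitz on $X\times_1W$. For any $G\in\operatorname{Lip}_1(Y\times_1W)$, each slice $G(\cdot,w)$ lies in $\operatorname{Lip}_1(Y)$. Fubini then gives
\[
c_\tau^{\pi\otimes\pi_W^\Delta}(F,G)=\int_W c_\tau^\pi\bigl(f,G(\cdot,w)\bigr)\,d\mu_W(w)\geq\inf_{g\in\operatorname{Lip}_1(Y)}c_\tau^\pi(f,g).
\]
Taking the infimum over $G$ and the supremum over $f$ yields ``$\geq$''.

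\emph{Upper bound.} For finite $W$ this is exactly \Cref{lem:finite-factor-contraction}. For general $W$, fix $F\in\operatorname{Lip}_1(X\times_1W)$ and $\epsilon,\eta>0$, and put $\delta\coloneqq\overrightarrow D_\tau^\pi(X,Y)$.
\begin{enumerate}
\item By tightness, choose a compact $L\subset W$ with $\mu_W(L)>1-\eta$ and a finite $\epsilon$-net $w_1,\dots,w_N$ of $L$. Form Borel Voronoi cells $V_i$ covering $L$ and add the remainder $W\setminus L$ to one cell. Set $a_i\coloneqq\mu_W(V_i)$, discarding empty cells, so that $W_N=(\{w_1,\dots,w_N\},d_W,\sum a_i\delta_{w_i})$ is a finite mm-space.
\item The slices $f_i\coloneqq F(\cdot,w_i)$ define a function in $\operatorname{Lip}_1(X\times_1W_N)$. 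The proof of \Cref{lem:finite-factor-contraction} gives $g_i\in\operatorname{Lip}_1(Y)$ with $(g_i(y))_i\in K_{W_N}$ for every $y$ and $\sum_ia_ic_\tau^\pi(f_i,g_i)\leq\delta+\epsilon$.
\item Extend by the McShane-type formula $G(y,w)\coloneqq\min_i\bigl(g_i(y)+d_W(w,w_i)\bigr)$. This is $1$-Lipschitz on $Y\times_1W$. Because of the slice constraint in $K_{W_N}$, it satisfies $G(\cdot,w_i)=g_i$.
\item Estimate the cost cell by cell. For $w\in V_i\cap L$ we have $d_W(w,w_i)\leq\epsilon$, hence $|F(x,w)-f_i(x)|\leq\epsilon$ and $|G(y,w)-g_i(y)|\leq\epsilon$. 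The $1/2$-Lipschitz property of $\tau$ then bounds the extra cost there by $\epsilon$. On $W\setminus L$, the range diameter $1$ bounds the cost by $\eta$. Altogether,
\[
c_\tau^{\pi\otimes\pi_W^\Delta}(F,G)\leq\delta+2\epsilon+\eta.
\]
\end{enumerate}
Letting $\epsilon,\eta\downarrow0$ and then taking the supremum over $F$ gives ``$\leq$''. The reverse directed quantity is handled identically with $X$ and $Y$ interchanged, which proves the equality of $D_\tau$.

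For the inequality, take an optimal $\pi$ from \Cref{thm:dtau-metric}. Since $\pi\otimes\pi_W^\Delta$ is an admissible coupling of the product measures,
\[
\dtau(X\times_1W,Y\times_1W)\leq D_\tau^{\pi\otimes\pi_W^\Delta}(X\times_1W,Y\times_1W)=D_\tau^\pi(X,Y)=\dtau(X,Y).
\]

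The main obstacle is the discretization step for general $W$. The delicate points are checking that the McShane extension really restores the prescribed slices, which uses the $K_{W_N}$ constraint from \Cref{thm:pooling-retraction}, and controlling the cell error uniformly in $x$ and $y$. Measurability of the cells and of $G$ is routine, since the cells are Borel and $G$ is continuous.
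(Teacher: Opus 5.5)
Your proof is correct. It reaches the same two directed inequalities as the paper but argues both halves differently.

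\textbf{Lower bound.} The paper lifts $f$ to $F=f\circ\pr_X$ and, for each $G$, builds a single competitor $g\coloneqq\tau^{-1}\bigl(\int_W\tau(G(\cdot,w))\,d\mu_W(w)\bigr)$. It proves $g\in\operatorname{Lip}_1(Y)$ using the concavity of $T_r$ from \Cref{lem:tau-concavity} together with Jensen's inequality, and then derives the pointwise bound $|\tau(f(x))-\tau(g(y))|\leq\int_W|\tau(F(x,w))-\tau(G(y,w))|\,d\mu_W(w)$. You observe instead that every slice $G(\cdot,w)$ already lies in $\operatorname{Lip}_1(Y)$. By Tonelli, the product cost is then an average of quantities each bounded below by $\inf_g c_\tau^\pi(f,g)$. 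This gives the same inequality without using any property of $\tau$. The paper's construction buys a stronger structural fact: a single $\tau$-averaged base feature dominates the product cost pointwise in $(x,y)$. That fact is not needed for the theorem.

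\textbf{Upper bound.} The paper shows that $Q(\nu)$ is $1$-Lipschitz for the normalized Fortet--Mourier metric $\beta$, uniformly in $F$ and $G$. It then approximates $\mu_W$ by finitely supported $\nu_n$ and uses McShane--Whitney surjectivity of restriction to identify $Q(\nu_n)$ with the finite-factor quantity. Your Voronoi discretization for a fixed $F$ is that argument unpacked:
\begin{itemize}
\item your extension $\min_i(g_i(y)+d_W(w,w_i))$ is precisely the McShane extension from $Y\times W_N$;
\item your cell estimate is the $w$-Lipschitz bound on $\varphi_{F,G}$.
\end{itemize}
The paper's packaging separates the measure approximation from the choice of features. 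Yours makes explicit how the $K_{W_N}$ constraint from pooling lets the extension reproduce the prescribed slices.

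\textbf{Minor correction.} Discard the cells of $\mu_W$-measure zero, not only the empty ones. \Cref{lem:finite-factor-contraction} is stated for weights $a_i>0$, i.e.\ a full-support finite mm-space. Since such cells are null, removing them does not affect your estimate.
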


\begin{proof}
We first prove contraction. In \Cref{subsec:app-factor-approximation}, the measure of $W$ is approximated by finitely supported probability measures, and the directed fixed-coupling quantity is shown to be continuous under this approximation. Applying \Cref{lem:finite-factor-contraction} to every finite support in both directions and passing to the limit gives
\[
D_\tau^{\pi\otimes\pi_W^\Delta}(X\times_1W,Y\times_1W)\leq D_\tau^\pi(X,Y).
\]

For the reverse directed estimate, lift $f\in\operatorname{Lip}_1(X)$ to the product by $F(x,w)\coloneqq f(x)$. Given $G\in\operatorname{Lip}_1(Y\times_1W)$, put
\[
u(y)\coloneqq\int_W\tau(G(y,w))\,d\mu_W(w),\qquad g(y)\coloneqq\tau^{-1}(u(y)).
\]
The integrand takes values in $(-1/2,1/2)$. If $u(y)$ were an endpoint, then the integrand would equal that endpoint almost everywhere, contrary to its range. Thus $u(y)\in(-1/2,1/2)$. Let $r\coloneqq d_Y(y,y')$. The inequalities $G(y,w)\leq G(y',w)+r$ and \Cref{lem:tau-concavity} give
\[
\tau(G(y,w))\leq T_r(\tau(G(y',w))).
\]
Applying Jensen's inequality for the concave function $T_r$ \cite[Section~1.2.1, Equation~(1.2.1), p.~10]{bakry-gentil-ledoux2014markov} gives
\[
u(y)\leq T_r(u(y'))=\tau(g(y')+r).
\]
Since $\tau$ is increasing, $g(y)\leq g(y')+r$. Interchanging $y$ and $y'$ shows that $g\in\operatorname{Lip}_1(Y)$.

For every $(x,y)\in X\times Y$,
\[
|\tau(f(x))-\tau(g(y))|\leq\int_W|\tau(F(x,w))-\tau(G(y,w))|\,d\mu_W(w).
\]
Integrating with respect to $\pi$, taking the infimum over $G$, and taking the supremum over $f$ gives the reverse directed estimate. Interchanging $X$ and $Y$ gives equality for the fixed coupling. Finally, $\pi\otimes\pi_W^\Delta$ is a coupling of the products. Apply the contraction estimate to every $\pi\in\T(\mu_X,\mu_Y)$ and take the infimum over $\pi$ to obtain the inequality for $\dtau$. This completes the proof.
\end{proof}

For the prescribed coupling, adjoining the diagonal coupling of the common factor preserves the fixed-coupling quantity exactly. The distance $\dtau$ takes the infimum over all couplings of the products, so only the nonexpansive inequality remains. The proof uses the sum metric and all real-valued $1$-Lipschitz functions on mm-spaces. General geometric data sets or other product metrics would require additional closure properties.

\appendix

\section{Bounded Features, Inversion, and the Screened Class}
\label{app:bounded-feature-inversion}

This appendix collects the compactness, realization, optimal-coupling, feature-closure, and inversion arguments used in \Cref{sec:bounded-observable-distance}, together with the fixed-range closedness and finite-feature approximation used in \Cref{sec:compact-screen-box}.

\subsection{Bounded Lipschitz families and feature realization}
\label{subsec:app-bounded-realization}

\begin{proof}[Proof of \Cref{lem:bounded-lipschitz-compactness}]
Put $c\coloneqq\max\{1,\ell\}$ and replace the metric of $Z$ by $cd$. Take an integer $k\geq B$. Then $\mathcal A$ is contained in the family of $1$-Lipschitz functions bounded in absolute value by $k$. By \cite[Lemma~2.2 and Remark~2.3]{hanika2022gds}, pointwise convergence and Ky Fan convergence agree on this family, and the pointwise closure $\mathcal C$ of $\mathcal A$ is compact. Every pointwise limit is continuous, $\ell$-Lipschitz, and bounded in absolute value by $B$.

Suppose that $u_n\to u$ pointwise in $\mathcal C$ as $n\to\infty$. Applying the dominated convergence theorem \cite[Theorem~3.31, pp.~92--93]{axler2020mira} to $|u_n-u|\leq2B$ shows that $u_n\to u$ in $L^1(\mu)$. Since $\mu$ has full support, two continuous functions that agree almost everywhere agree everywhere. The natural map from $\mathcal C$ to $L^1(\mu)$ is therefore a continuous injection from a compact space into a Hausdorff space. Its image is the $L^1(\mu)$-closure of $\mathcal A$, which also proves the identification with the pointwise closure. This completes the proof.
\end{proof}

\begin{proof}[Proof of \Cref{lem:feature-realization}]
The quotient construction for a geometric data set is given in \cite[Definition~3.16, Proposition~3.17, Claim~3.18, and Proposition~3.19]{gds1}. We verify the extension needed when the domain $Z$ is only assumed to be separable and metrizable.

Every $h\in H$ is constant on the zero-distance classes of $d^H$ and is $1$-Lipschitz on $Z_H$. It therefore has a unique continuous extension $\widehat h$ to $\widehat Z_H$. Fix $z_0\in Z$ and consider
\[
Z_H\longrightarrow\ell^\infty(H),\qquad
[z]\longmapsto(h(z)-h(z_0))_{h\in H}.
\]
Its image is bounded because $d^H(z,z_0)<\infty$, and the supremum-norm distance between the images of $[z]$ and $[z']$ equals $d^H(z,z')$. This is an isometric embedding and extends isometrically to $\widehat Z_H$. Its coordinates are $\widehat h-\widehat h(q(z_0))$, so the extended feature family induces the metric on the completion.

The space $\widehat Z_H$ is complete and separable, and $Z_H^\nu$ is a closed subspace. The assumed Borel measurability of the canonical map makes $q_*\nu$ well-defined, and its restriction to its support has full support. The stated triple is therefore a geometric data set. If $J$ is closed, then every $\widehat h$ also takes values in $J$. Finally, suppose that $\nu$ has full support and $q$ is continuous. The inverse image under $q$ of every neighborhood of $q(z)$ is a positive-measure neighborhood of $z$. Therefore, $q(Z)\subset Z_H^\nu$. This completes the proof.
\end{proof}

\subsection{Compactified features and optimal couplings}
\label{subsec:app-feature-coupling}

We first prove attainment of the infimum used in \Cref{thm:dtau-metric}. By \Cref{lem:feature-compactness}, the sets $\Kc_\tau(X)$ and $\Kc_\tau(Y)$ are compact in their respective $L^1$ spaces. Fix $\epsilon>0$, and let $\{u_1,\ldots,u_m\}$ and $\{v_1,\ldots,v_l\}$ be finite $\epsilon$-nets. For every coupling $\pi$ and features $u,u',v,v'$,
\[
\begin{aligned}
&\left|\int|u\circ\pr_1-v\circ\pr_2|\,d\pi
-\int|u'\circ\pr_1-v'\circ\pr_2|\,d\pi\right|\\
&\qquad\leq
\|u-u'\|_{L^1(\mu_X)}+\|v-v'\|_{L^1(\mu_Y)}.
\end{aligned}
\]

Suppose that $\pi_k$ converges weakly to $\pi$ as $k\to\infty$. For every pair of net centers, the function $(x,y)\mapsto|u_i(x)-v_j(y)|$ is bounded and continuous, so its integrals converge. Let $\eta_k$ be the maximum of the absolute differences of these integrals over the finitely many pairs. Then $\eta_k\to0$ as $k\to\infty$. Applying the net approximation on both sides gives
\[
\sup_{u,v}\left|
\int|u\circ\pr_1-v\circ\pr_2|\,d\pi_k
-\int|u\circ\pr_1-v\circ\pr_2|\,d\pi
\right|\leq4\epsilon+\eta_k.
\]
Since $\epsilon>0$ is arbitrary, the family of costs converges uniformly. Both directed Hausdorff functionals therefore converge, and $\pi\mapsto D_\tau^\pi(X,Y)$ is continuous. Weak compactness of the coupling set \cite[Lemma~2.10]{gds1} proves attainment.

We next identify the feature closure of the intermediate object in \Cref{thm:dtau-geodesic}. The map
\[
\Phi_t(u,v)=(1-t)u+tv
\]
is continuous on $R$. Thus $C_t=\Phi_t(R)$ is compact, and $\Phi_t(R_0)$ is dense in $C_t$. Transforming the generating features of $H_t$ by $\tau$ and pulling them back to $Z$ gives exactly $\Phi_t(R_0)$. The canonical map $q_t$ pushes $\pi$ forward to $\mu_t$. By \Cref{lem:feature-realization} and the identity $Q_t=\supp\mu_t$, the set $q_t(Z)$ is contained in and dense in $Q_t$. The transformed features are uniformly bounded and satisfy a common Lipschitz estimate. Therefore, \Cref{lem:bounded-lipschitz-compactness} identifies their pointwise closure with their $L^1(Z,\pi)$ closure, which is $C_t$.

\subsection{Interpolation and inversion}
\label{subsec:app-interpolation-inversion}

\begin{proof}[Proof of \Cref{lem:scalar-interpolation}]
Put $x\coloneqq\tanh a$, $y\coloneqq\tanh b$, and $q\coloneqq(1-t)x+ty$. Then
\[
M_t(a,b)=\frac12\log\frac{1+q}{1-q},
\]
and
\[
\frac{\partial M_t}{\partial a}
=\frac{(1-t)(1-x^2)}{1-q^2},\qquad
\frac{\partial M_t}{\partial b}
=\frac{t(1-y^2)}{1-q^2}.
\]
The inequalities
\[
\begin{aligned}
1-q&\geq(1-t)(1-x),&\qquad 1+q&\geq(1-t)(1+x),\\
1-q&\geq t(1-y),&1+q&\geq t(1+y)
\end{aligned}
\]
give
\[
1-q^2\geq(1-t)^2(1-x^2),\qquad
1-q^2\geq t^2(1-y^2).
\]
The two partial derivatives are therefore bounded above by $(1-t)^{-1}$ and $t^{-1}$, respectively. Varying the two variables in succession gives the two-variable estimate. For an endpoint constant, the same calculation applies with $x=\pm1$ or $y=\pm1$. This completes the proof.
\end{proof}

\begin{proof}[Proof of \Cref{lem:inversion-tightness}]
Fix $\epsilon,\eta>0$, and choose $M$ from the tail assumption. Take $M'>M$. The function $\tau^{-1}$ is uniformly continuous on $[-\tau(M'),\tau(M')]$. Therefore, there is $\delta\in(0,\tau(M')-\tau(M))$ such that
\[
|s|\leq M,\quad|\tau(s)-\tau(t)|\leq\delta
\quad\Longrightarrow\quad|s-t|\leq\epsilon.
\]
The two assumptions on the left give $|\tau(s)|\leq\tau(M)$ and
\[
|\tau(t)|\leq|\tau(s)|+\delta<\tau(M').
\]

Apply Markov's inequality \cite[proof of Proposition~1.2.6, p.~16]{bakry-gentil-ledoux2014markov} to the nonnegative function $|\tau\circ a_n-\tau\circ b_n|$. For every sufficiently large $n$,
\[
\xi_n(\{|a_n-b_n|>\epsilon\})
\leq\eta+\frac1\delta
d_{1,\xi_n}(\tau\circ a_n,\tau\circ b_n).
\]
Since $\eta>0$ is arbitrary, $a_n-b_n$ converges to $0$ in measure as $n\to\infty$. The definition of the Ky Fan metric gives $\kf^{\xi_n}(a_n,b_n)\to0$ as $n\to\infty$. This completes the proof.
\end{proof}

\subsection{Closedness of the fixed range and finite-feature approximation}
\label{subsec:app-screened-closed-density}

We first prove that $\DK$ is closed. Let $Z_n\in\DK$ and suppose that $\Box(Z_n,Z)\to0$ as $n\to\infty$. Since $\dconc\leq\Box$, \Cref{eq:dconc-coupling} gives, for every $f\in\overline{F_Z}$, couplings $\pi_n\in\T(\mu_Z,\mu_{Z_n})$ and functions $f_n\in\overline{F_{Z_n}}$ such that
\[
\kf^{\pi_n}(f\circ\pr_1,f_n\circ\pr_2)\longrightarrow0
\qquad(n\to\infty).
\]
Every $f_n$ is $K$-valued, so $f_*\mu_Z$ is supported on $K$. Otherwise, a set on which $f$ has positive distance from $K$ would have positive measure and contradict the displayed convergence in measure. The function $f$ is continuous and $\mu_Z$ has full support. Therefore, $f$ takes values in $K$ at every point. Thus $Z\in\DK$, and $\DK$ is closed.

We next prove density of finite-feature quotients. Take $Z\in\DK$ and $\epsilon>0$. Tightness gives a compact set $L\subset Z$ such that
\[
\mu_Z(Z\setminus L)<\epsilon.
\]
The family $\overline{F_Z}|_L$ is uniformly bounded and equicontinuous. The Arzel\`a--Ascoli theorem \cite[7.15, p.~232]{kelley1955general} shows that it is totally bounded in the uniform norm. We may therefore choose a finite set $G\subset\overline{F_Z}$ such that $G|_L$ is a uniform $\epsilon/2$-net of $\overline{F_Z}|_L$.

Use the coupling $(\operatorname{id}_Z,p)_*\mu_Z$ induced by the quotient map $p\colon Z\to Z/G$. The graph over $L$ is closed and has measure greater than $1-\epsilon$. On this graph, every $f\in\overline{F_Z}$ is within uniform distance $\epsilon/2$ of some member of $G$, and every member of $G$ is matched with itself. Therefore, \Cref{eq:box-formula} gives
\[
\Box(Z,Z/G)\leq\epsilon.
\]
The set $G$ is finite and every member is $K$-valued. Thus $Z/G\in\DM_K(N)$ for some $N$, and $\bigcup_{N\geq1}\DM_K(N)$ is dense in $\DK$.

\section{Finite Measurements and Pyramid Limits}
\label{app:pyramid-technical}

This appendix verifies the fixed-range specializations of the cited refinement and reconstruction results. It also supplies the uniform arguments used to characterize weak convergence by finite measurements and by the series metric. We do not repeat the proofs of the cited general results.

\subsection{Refinements that preserve the fixed range}
\label{subsec:app-K-refinement}

We first examine the construction in \cite[Lemma~5.4]{gds2} used for \Cref{lem:K-domination-refinement}. Let $p\colon\bar Y\to Y$ be a domination map. For every $f\in F_Y$, the construction selects $h_f\in\overline{F_{\bar X}}$ that is close to $f\circ p$ on a Box correspondence, puts
\[
G\coloneqq\{h_f\mid f\in F_Y\},
\]
and takes the quotient $X\coloneqq\bar X/G$. Since $\bar X\in\DK$ and $K$ is closed, every member of $\overline{F_{\bar X}}$ is $K$-valued. Therefore, $X\in\DK$. The inequality $\#G\leq\#F_Y$ also shows that the construction does not increase the number of features.

For \Cref{lem:K-common-upper-bound}, apply \cite[Lemma~5.6]{gds2} with the family consisting only of the identity transformation. This gives a precompact sequence $(Z_n)$ with the required domination relations. Pullback along the domination map $\bar Z_n\to Z_n$ embeds $\overline{F_{Z_n}}$ into the $K$-valued family $\overline{F_{\bar Z_n}}$. The domination map has dense image by full support. Continuity of each feature on $Z_n$ then shows that it is $K$-valued at every point. Thus $Z_n\in\DK$.

For \Cref{lem:finite-net-precompactness}, take only the identity transformation in \cite[Lemma~4.12]{gds2} and fix the parameter there to be $1/2$. Its large-measure closed set, uniformly finite net, and cardinality assumptions then agree with the conditions on $L_Z$, $G_Z$, and $N$ in \Cref{lem:finite-net-precompactness}. Since $G_Z\subset\overline{F_Z}$, the quotient $Z/G_Z$ belongs to $\DK$. By \Cref{lem:bounded-measurement-compact}, all these quotients lie in the common compact layer $\DM_K(N)$.

\subsection{Details of convergence through finite measurements}
\label{subsec:app-pyramid-measurements}

We supply the uniform argument in \Cref{lem:K-pyramid-measurements}. Assume weak convergence and fix $N\in\mathbb N$. For each $A\in\DM_K(\mathcal P;N)$, choose $Y_n\in\mathcal P_n$ such that $Y_n\to A$ in the Box distance as $n\to\infty$. Apply \Cref{lem:K-domination-refinement} with $\bar X=Y_n$ and $Y=\bar Y=A$. We obtain $A_n\in\DM_K(\mathcal P_n;N)$ such that
\[
\Box(A_n,A)\leq\Box(Y_n,A).
\]
A finite cover of the compact set $\DM_K(\mathcal P;N)$ by neighborhoods obtained in this way gives
\[
\sup_{A\in\DM_K(\mathcal P;N)}\Box(A,\DM_K(\mathcal P_n;N))\longrightarrow0\qquad(n\to\infty).
\]

If the reverse directed distance did not converge to zero, we could choose a subsequence and $B_n\in\DM_K(\mathcal P_n;N)$ whose distance from $\DM_K(\mathcal P;N)$ is bounded below by a positive constant. By \Cref{lem:bounded-measurement-compact}, a further subsequence satisfies $B_n\to B$ in the Box distance for some $B\in\DM_K(N)$ as $n\to\infty$. The second condition of weak convergence gives $B\in\mathcal P$. Then $B\in\DM_K(\mathcal P;N)$, which is a contradiction.

Conversely, assume Hausdorff convergence of every finite-measurement set. The approximation constructed in the proof of \Cref{lem:K-pyramid-measurements} establishes the first condition of weak convergence. To verify the second, suppose to the contrary that there are $Y\notin\mathcal P$ and a subsequence such that $\Box(Y,\mathcal P_n)\to0$ as $n\to\infty$. Choose $Y_n\in\mathcal P_n$ such that $Y_n\to Y$ in the Box distance as $n\to\infty$. For each $m\in\mathbb N$, choose a finite quotient $B_m\preceq Y$ satisfying $\Box(B_m,Y)<1/m$. The refinement lemma gives finite-feature approximations $B_{m,n}\preceq Y_n$ such that $B_{m,n}\to B_m$ in the Box distance as $n\to\infty$. For fixed $m$, convergence of the measurement sets implies $B_m\in\mathcal P$. Letting $m\to\infty$ and using closedness of $\mathcal P$ gives $Y\in\mathcal P$, contrary to the choice of $Y$.

\subsection{Reconstruction and the series metric}
\label{subsec:app-pyramid-reconstruction}

For \Cref{lem:finite-measurement-proximity}, \cite[Lemma~6.10]{gds2} selects the features on the $Y$ side corresponding to the features of $A\preceq X$ and constructs the quotient $B\preceq Y$ generated by that finite family. Since $Y\in\DK$, the selected features are $K$-valued, and
\[
B\in\DM(Y;N)=\DM_K(\mathcal P_Y;N).
\]
Combining this construction with the finite-feature Box estimate in \cite[Proposition~6.8]{gds2} gives the factor $2N$ in \Cref{lem:finite-measurement-proximity}.

To apply \cite[Proposition~6.24]{gds2} in \Cref{lem:finite-measurement-reconstruction}, take the monoidal family in that proposition to consist only of the identity transformation. Its measurement sets are then exactly the sets $\DM(X;N)$ used here. For $X,X_n\in\DK$, the equality $\DM(X;N)=\DM_K(\mathcal P_X;N)$ established after \Cref{def:finite-measurements} identifies these sets with the $K$-measurement sets of the corresponding principal pyramids. Thus the fixed-range condition introduces no additional truncation.

We finally record the series estimate used in \Cref{thm:K-pyramid-compactification}. Put
\[
H_N(\mathcal P,\mathcal Q)\coloneqq(\Box)_H(\DM_K(\mathcal P;N),\DM_K(\mathcal Q;N)).
\]
For every $N\in\mathbb N$,
\[
H_N(\mathcal P_n,\mathcal P)\leq2N2^N d_{\Pi,K}(\mathcal P_n,\mathcal P),
\]
so convergence in $d_{\Pi,K}$ implies convergence in every coordinate. Conversely, if every coordinate converges, then, for every $M\in\mathbb N$,
\[
d_{\Pi,K}(\mathcal P_n,\mathcal P)\leq\sum_{N=1}^{M}\frac{H_N(\mathcal P_n,\mathcal P)}{2N2^N}+\sum_{N>M}\frac{1}{N2^N}.
\]
First let $n\to\infty$, and then let $M\to\infty$. We obtain $d_{\Pi,K}(\mathcal P_n,\mathcal P)\to0$ as $n\to\infty$.

\section{Construction and Convergence of Pooling}
\label{app:pooling}

This appendix supplies the case analysis and iterative convergence used in \Cref{lem:pairwise-pooling,lem:tau-concavity,thm:pooling-retraction}, together with the finite-support approximation used in \Cref{thm:tau-tensorization}.

\subsection{Monotonicity of two-coordinate pooling}
\label{subsec:app-pairwise-pooling}

Put $r\coloneqq r_{ij}$ and define
\[
H_+(t)\coloneqq a_i\sigma(t+r)+a_j\sigma(t),\qquad H_-(t)\coloneqq a_i\sigma(t)+a_j\sigma(t+r).
\]
Both functions are strictly increasing. Fix the second coordinate $y$ and vary the first coordinate $x$. If $x<y-r$, then the affected output is $(t,t+r)$, where
\[
H_-(t)=a_i\sigma(x)+a_j\sigma(y),
\]
so both output coordinates increase with $x$. For $y-r\leq x\leq y+r$, the output is $(x,y)$. If $x>y+r$, then the affected output is $(t+r,t)$, where
\[
H_+(t)=a_i\sigma(x)+a_j\sigma(y),
\]
and both output coordinates again increase with $x$.

As $x$ approaches $y-r$ from the lower violation region, its parameter approaches $y-r$. As $x$ approaches $y+r$ from the upper violation region, its parameter approaches $y$. Hence the three formulas agree at both boundaries. If the first coordinate is fixed and the second is varied, the corresponding formulas agree at $y=x-r$ and $y=x+r$. Therefore, $P_{ij}^\sigma$ is coordinatewise nondecreasing and continuous.

Now set $\sigma=\tau$. Suppose that $z_i>z_j+r$, and write the affected coordinates of $P_{ij}z$ as $(p+r,p)$. Preservation of the weighted mean gives
\[
a_i\{\tau(z_i)-\tau(p+r)\}=a_j\{\tau(p)-\tau(z_j)\},
\]
where $z_j\leq p\leq z_i-r$. Since the secant slopes of the concave function $T_s$ decrease from left to right,
\[
a_iT_s(\tau(z_i))+a_jT_s(\tau(z_j))\leq a_iT_s(\tau(p+r))+a_jT_s(\tau(p)).
\]
Write the affected coordinates of $P_{ij}(z+s\boldsymbol1)$ as $(\widetilde p+r,\widetilde p)$. The left-hand side is $H_+(\widetilde p)$, and the right-hand side is $H_+(p+s)$. Strict monotonicity of $H_+$ gives $\widetilde p\leq p+s$. The lower violation region is treated with $H_-$, while equality holds in the feasible region after translation. Thus
\[
P_{ij}(z+s\boldsymbol1)\leq P_{ij}z+s\boldsymbol1
\]
in every case.

\subsection{Convergence of cyclic pooling}
\label{subsec:app-pooling-convergence}

Let $(z^{(n)})$ be the iterates in \Cref{thm:pooling-retraction}. For an unordered pair $e$, put
\[
\Delta_e(q)\coloneqq E(q)-E(P_eq).
\]
The map $P_e$ is continuous, and hence so is $\Delta_e$. Each pooling step preserves the weighted mean of the two affected $\tau$ coordinates and moves them strictly closer whenever it changes them. The identity
\[
a_i u^2+a_jv^2=\frac{(a_i u+a_jv)^2}{a_i+a_j}+\frac{a_ia_j}{a_i+a_j}(u-v)^2
\]
applied before and after the step shows that $\Delta_e(q)\geq0$, with equality if and only if $P_eq=q$. Therefore, $E(z^{(n)})$ is nonincreasing and bounded below, and
\[
E(z^{(n)})-E(z^{(n+1)})\longrightarrow0\qquad(n\to\infty).
\]

We prove that $\|z^{(n+1)}-z^{(n)}\|_\infty\to0$ as $n\to\infty$. Otherwise, compactness of the box containing the iterates and finiteness of the set of pairs give a pair $e$, a number $\epsilon>0$, a vector $v\in\R^N$, and a subsequence on which $P_e$ is applied such that
\[
z^{(n_\ell)}\longrightarrow v,\qquad\|P_ez^{(n_\ell)}-z^{(n_\ell)}\|_\infty\geq\epsilon\qquad(\ell\to\infty).
\]
Continuity gives $P_ev\neq v$ and hence $\Delta_e(v)>0$. On the other hand, $\Delta_e(z^{(n_\ell)})\to0$ as $\ell\to\infty$, which is a contradiction.

Let $q$ be a cluster point, and choose a subsequence such that $z^{(n_\ell)}\to q$ as $\ell\to\infty$. Fix a pair $e_k$. For each $\ell$, choose $h_\ell\in\{0,\ldots,m-1\}$ so that the step starting at $z^{(n_\ell+h_\ell)}$ applies $P_{e_k}$. Every one of the at most $m$ intervening step lengths tends to zero as $\ell\to\infty$, so $z^{(n_\ell+h_\ell)}\to q$. The corresponding energy drops also converge to zero. Thus $\Delta_{e_k}(q)=0$ and $P_{e_k}q=q$. Since $e_k$ was arbitrary, $q\in K_W$.

Every $P_e$ fixes $q$ and is nonexpansive for $d_{\tau,a}$. Therefore, $d_{\tau,a}(z^{(n)},q)$ is nonincreasing and tends to zero along the subsequence converging to $q$. It follows that it tends to zero as $n\to\infty$. Since every $a_i$ is positive, $\tau(z_i^{(n)})\to\tau(q_i)$ as $n\to\infty$ for every $i$. The iterates and $q$ lie in one compact box, and $\tau^{-1}$ is uniformly continuous on its image. Hence $z^{(n)}\to q$ as $n\to\infty$.

\subsection{Finite-support approximation of a general factor}
\label{subsec:app-factor-approximation}

For a Borel probability measure $\nu$ on $W$, put
\[
Q(\nu)\coloneqq\sup_F\inf_G\int_W\int_{X\times Y}|\tau(F(x,w))-\tau(G(y,w))|\,d\pi(x,y)\,d\nu(w),
\]
where $F$ and $G$ range over the real-valued $1$-Lipschitz functions on $X\times_1W$ and $Y\times_1W$, respectively. In particular,
\[
Q(\mu_W)=\overrightarrow D_\tau^{\pi\otimes\pi_W^\Delta}(X\times_1W,Y\times_1W).
\]

For a real-valued function $\varphi$ on $W$, write
\[
\operatorname{Lip}(\varphi)\coloneqq\sup_{w\neq w'}\frac{|\varphi(w)-\varphi(w')|}{d_W(w,w')}
\]
for its least Lipschitz constant, with the supremum over the empty set understood as zero. For probability measures $\nu,\nu'$ on $W$, define
\[
\beta(\nu,\nu')\coloneqq\sup\left\{\left|\int_W\varphi\,d\nu-\int_W\varphi\,d\nu'\right|\mathrel{}\middle|\mathrel{}\|\varphi\|_\infty\leq1,\ \operatorname{Lip}(\varphi)\leq1\right\}.
\]
This is a normalization of the Fortet--Mourier metric \cite[Equation~(3), p.~2]{hille-theewis2023boundedlipschitz}. For fixed $F,G$, put
\[
\varphi_{F,G}(w)\coloneqq\int_{X\times Y}|\tau(F(x,w))-\tau(G(y,w))|\,d\pi(x,y).
\]
This function takes values in $[0,1]$ and satisfies
\[
|\varphi_{F,G}(w)-\varphi_{F,G}(w')|\leq d_W(w,w')
\]
for all $w,w'\in W$. To obtain this estimate, split the difference of the two integrands by the triangle inequality and apply the $1$-Lipschitz properties of $F,G$ and the $1/2$-Lipschitz property of $\tau$. The bound is uniform in $F,G$, so taking the infimum over $G$ and the supremum over $F$ gives
\[
|Q(\nu)-Q(\nu')|\leq\beta(\nu,\nu').
\]

We verify density of finitely supported probability measures for $\beta$. Fix $\epsilon>0$. Tightness gives a compact set $L\subset W$ such that $\mu_W(W\setminus L)<\epsilon/4$. Partition $L$ into finitely many Borel sets of radius less than $\epsilon/2$. Move the mass of each set to one representative point and move the mass outside $L$ to one representative point. The resulting finitely supported probability measure $\nu$ satisfies
\[
\left|\int_W\varphi\,d\mu_W-\int_W\varphi\,d\nu\right|<\epsilon
\]
for every $\varphi$ with $\|\varphi\|_\infty\leq1$ and $\operatorname{Lip}(\varphi)\leq1$. We can therefore choose finitely supported probability measures $\nu_n$ such that $\beta(\nu_n,\mu_W)\to0$ as $n\to\infty$.

Let $W_n\coloneqq\supp\nu_n$ carry the inherited metric and the measure $\nu_n$. The McShane--Whitney extension theorem \cite[Proposition~1.1]{esaki-kazukawa-mitsuishi2024invariants} shows that restriction of $1$-Lipschitz functions from $X\times_1W$ and $Y\times_1W$ to $X\times_1W_n$ and $Y\times_1W_n$, respectively, is surjective. Applying \Cref{lem:finite-factor-contraction} to $W_n$ gives
\[
Q(\nu_n)\leq\overrightarrow D_\tau^\pi(X,Y).
\]
Letting $n\to\infty$ and using the Lipschitz estimate for $Q$ proves directed contraction for $W$.

\section*{Acknowledgments}

The author used Claude and GPT-5.6-series Codex models as AI-assisted tools in preparing this manuscript. The author reviewed and revised the mathematical content and takes full responsibility for the final manuscript.

\section*{Statements and Declarations}

\noindent\textbf{Funding.}
No funding was received for this work.

\medskip
\noindent\textbf{Competing Interests.}
The author has no relevant financial or non-financial interests to disclose.

\medskip
\noindent\textbf{Data Availability.}
No datasets were generated or analyzed during this study.

\end{document}